\newcommand*{\rn}[1]{%
  \expandafter\@rn\csname c@#1\endcsname%
}
\newcommand*{\@rn}[1]{%
  $\ifcase#1\or(i)\or(ii)\or(iii)\or(iv)\or(v)\or(vi)\or(vii)\or(viii)\or(ix)\or(x)%
    \else\@ctrerr\fi$%
}
\AddEnumerateCounter{\rn}{\@rn}{53.13}
  \newbox\gnBoxA
\newdimen\gnCornerHgt
\newdimen\gnArgHgt
\def\Godelnum #1{%
\setbox\gnBoxA=\hbox{$#1$}%
\gnArgHgt=\ht\gnBoxA%
\ifnum     \gnArgHgt<\gnCornerHgt \gnArgHgt=0pt%
\else \advance \gnArgHgt by -\gnCornerHgt%
\fi \raise\gnArgHgt\hbox{$\ulcorner$} \box\gnBoxA %
\raise\gnArgHgt\hbox{$\urcorner$}}
\newcommand{\pushright}[1]{\ifmeasuring@#1\else\omit\hfill$\displaystyle#1$\fi\ignorespaces}
\newcommand{\pushleft}[1]{\ifmeasuring@#1\else\omit$\displaystyle#1$\hfill\fi\ignorespaces}
\newcommand{\PP}{\mathbb{P}}
\newcommand{\Q}{\dot{\mathbb{Q}}}
\newcommand{\1}{\mathbbm{1}}
\newcommand{\forces}{\Vdash}
\newcommand{\res}{\upharpoonright}
\newcommand{\dotieconcat}[2]{\text{\raisebox{.8ex}{$\smallfrown$}}}
\newcommand{\QQ}{\mathbb{Q}}
\newcommand{\BB}{\mathbb{B}}
\newcommand{\ZFC}{\mathrm{ZFC}}
\newtheoremstyle{nopoint}% name of the style to be used
  {}{}{\itshape}{}{\bfseries}{}{5pt}{}% Manually specify head
\theoremstyle{plain}
\newtheorem{thm}{Theorem}[section]
\newtheorem{prop}[thm]{Proposition}
\newtheorem{lemm}[thm]{Lemma}
\newtheorem{fact}[thm]{Fact}
\newtheorem{cor}[thm]{Corollary}
\newtheorem{claim}[thm]{Claim}
\newtheorem{sub}[thm]{Subclaim}
\theoremstyle{definition}
\newtheorem{defn}[thm]{Definition}
\newtheorem{rem}[thm]{Remark}
\newtheorem*{que*}{Question}
\theoremstyle{nopoint}
\newtheorem*{lemm*}{Lemma}
\newtheorem*{thm*}{Theorem}
\newtheorem*{rem*}{Remark}
\newcommand{\plus}{+}
\newcommand{\diacol}{\diamondsuit(\omega_1^{{<}\omega})}
\newcommand{\diacolp}{\diamondsuit^{\plus}(\omega_1^{{<}\omega})}
\newcommand{\diab}{\diamondsuit(\mathbb B)}
\newcommand{\dia}[1]{\diamondsuit(#1)}
\newcommand{\diaba}{\diamondsuit^{\plus}(\BB)}
\newcommand{\good}{\text{-slim}}
\newcommand{\goodit}{\text{\textit{-slim}}}
\newcommand{\fgood}{f\good}
\newcommand{\fgoodit}{f\goodit}
\newcommand{\Add}{\mathrm{Add}}
\newcommand{\NS}{\mathrm{NS}_{\omega_1}}
\newcommand{\NSf}{\mathrm{NS}_{f}}
\newcommand{\NScheckf}{\mathrm{NS}_{\check{f}}}
\newcommand{\fMMpp}{\mathrm{MM}^{++}(f)}
\newcommand{\BP}{\mathbb{B}}
\newcommand{\MM}{\mathrm{MM}}
\newcommand{\MMpp}{\MM^{++}}
\newcommand{\lh}{\mathrm{lh}}
\newcommand{\Ord}{\mathrm{Ord}}
\newcommand{\Htwo}{H_{\omega_2}}
\newcommand{\Lim}{\mathrm{Lim}}
\newcommand{\sucm}{\mathrm{suc}}
\newcommand{\msup}[2]{#1^{(#2)}}
\newcommand{\pred}{\mathrm{pred}}
\newcommand{\hooks}{\angle\ }
\newcommand{\standarditeration}{\langle \PP_\alpha,\Q_\beta\mid\alpha\leq\gamma, \beta<\gamma\rangle}
\newcommand{\Pomo}{\mathcal P(\omega_1)}
\newcommand{\Qmax}{\mathbb Q_{\mathrm{max}}}
\newcommand{\SRP}{\mathrm{SRP}}
\newcommand{\RCS}{\mathrm{RCS}}
\newcommand{\nicelim}{\mathrm{nicelim}}
\newcommand{\todorcevic}{Todor\v{c}evi\'{c}}
\numberwithin{thm}{section}
\theoremstyle{plain}
\theoremstyle{definition}
\newtheorem*{defn*}{Definition}
\newtheorem{conv}[thm]{Convention}
\newcommand{\CS}{\mathrm{CS}}
\title{An Iteration Theorem for $\omega_1$-preserving Forcings}
\author{Andreas Lietz\footnote{Institut f\"ur Mathematische Logik und Grundlagenforschung, Universit\"at M\"unster, Einsteinstrasse 62, 48149 M\"unster, FRG. }\hspace{5pt}\footnote{Current address: Institut für Diskrete Mathematik und Geometrie, TU Wien, Wiedner Hauptstrasse 8-10/104, 1040 Wien, AT\\
This paper is part of the authors PhD thesis.}}
\date{}
\begin{document}
\maketitle

\begin{abstract}
We prove an iteration theorem which guarantees for a wide class of nice iterations of $\omega_1$-preserving forcings that $\omega_1$ is not collapse, at the price of needing large cardinals to burn as fuel. More precisely, we show that a nice iteration of $\omega_1$-preserving forcings which force $\SRP$ at successor steps and preserves old stationary sets does not collapse $\omega_1$.
\end{abstract}

\section{Introduction}
The method of iterated forcing is a powerful yet flexible tool in establishing independence results. Say, the goal is to produce a forcing extension of the universe with a specific property. Frequently, it is the case that it is much easier to find a forcing $\PP$, which solves this problem for ``a single instance" or ''all instances in $V$", but may add new ``unresolved instances" at the same time. One can then hope to iterate $\PP$ up to some closure point, usually a sufficiently large regular cardinal $\kappa$ so that the whole iteration is $\kappa$-c.c., so that in end all instances have been dealt with and the full desired property holds. This can only work if the iteration in question preserves the progress of earlier stages up until the end. Theorems which guarantee such a preservation are often called iteration theorems. If the property in question is one about $\Htwo$ then at the very least it is required that $\omega_1$ is preserved or maybe the somewhat stronger property that stationary sets are preserved. We give some examples.

\subsection{Iterations of c.c.c. Forcings}
The earliest iteration theorem is due to Solovay-Tennenbaum.

\begin{thm}[Solovay-Tennenbaum, \cite{solten}]
Suppose $\standarditeration$ is a finite support iteration of c.c.c. forcings. Then $\PP_\gamma$ is c.c.c..
\end{thm}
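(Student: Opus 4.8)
The plan is to prove by induction on $\gamma$ that $\PP_\gamma$ is c.c.c., splitting into the base, successor, and limit cases. The base case $\gamma=0$ is trivial. For a successor $\gamma=\delta+1$ I would factor $\PP_\gamma\cong\PP_\delta\ast\Q_\delta$ and invoke the following two-step lemma: \emph{if $\PP$ is c.c.c.\ and $\1\forces_\PP$ ``$\Q$ is c.c.c.'', then $\PP\ast\Q$ is c.c.c.} This applies because $\PP_\delta$ is c.c.c.\ by the induction hypothesis and each iterand is forced to be c.c.c.\ by assumption. For a limit $\gamma$ I would run a $\Delta$-system argument on finite supports, reducing compatibility to a bounded stage $\PP_\beta$ with $\beta<\gamma$, which is c.c.c.\ by the induction hypothesis.

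To prove the two-step lemma, suppose $\langle(p_\xi,\dot q_\xi)\mid\xi<\omega_1\rangle$ enumerates a putative antichain in $\PP\ast\Q$, and let $\dot S$ be a $\PP$-name for $\{\xi<\omega_1\mid p_\xi\in\dot G\}$, where $\dot G$ names the generic. The hard part, and the only point where the c.c.c.\ of $\PP$ is used, is to find a condition forcing $\dot S$ to be uncountable. I would argue by contradiction: if $\1\forces_\PP$ ``$\dot S$ is countable'', fix a name $\dot e$ for an enumeration $\dot e\colon\omega\to\omega_1$ of $\dot S$. For each $n$, conditions deciding distinct values of $\dot e(n)$ are pairwise incompatible, so by c.c.c.\ only countably many values are possible; hence $\1\forces_\PP\ran(\dot e)\subseteq\check Z$ for some fixed countable $Z\in V$. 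Choosing $\xi^\ast\in\omega_1\setminus Z$ and any generic $G\ni p_{\xi^\ast}$ gives $\xi^\ast\in\dot S^G\setminus Z$, a contradiction. Thus some $p^\ast$ forces $\dot S$ uncountable.

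To finish the lemma, fix a generic $G\ni p^\ast$. In $V[G]$ the set $\dot S^G$ is uncountable, and $\langle\dot q_\xi^G\mid\xi\in\dot S^G\rangle$ is an uncountable indexed family in the c.c.c.\ poset $\Q^G$; hence there are distinct $\xi,\eta\in\dot S^G$ with $\dot q_\xi^G$ and $\dot q_\eta^G$ compatible (this includes the case that they are equal). Picking $r\in G$ below both $p_\xi$ and $p_\eta$ together with a name $\dot s$ such that $r\forces_\PP\dot s\leq\dot q_\xi,\dot q_\eta$, the condition $(r,\dot s)$ lies below both $(p_\xi,\dot q_\xi)$ and $(p_\eta,\dot q_\eta)$, contradicting that they form an antichain; this proves the lemma and hence the successor step.

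For the limit case, let $\langle p_\xi\mid\xi<\omega_1\rangle\subseteq\PP_\gamma$ be given. Each $\supp(p_\xi)$ is finite, so by the $\Delta$-system lemma there is an uncountable $I\subseteq\omega_1$ whose supports form a $\Delta$-system with finite root $\rho$. Since $\rho$ is finite and $\gamma$ is a limit, $\beta:=\sup(\rho)+1<\gamma$, and $\PP_\beta$ is c.c.c.\ by the induction hypothesis, so there are distinct $\xi,\eta\in I$ with $p_\xi\res\beta$ and $p_\eta\res\beta$ compatible, say below some $q\in\PP_\beta$. Because $\supp(p_\xi)\cap\supp(p_\eta)\subseteq\rho\subseteq\beta$, the tails of $p_\xi$ and $p_\eta$ above $\beta$ have disjoint supports, so gluing $q$ with these tails produces a single condition $r\leq p_\xi,p_\eta$. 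Hence no family of size $\aleph_1$ is an antichain, and $\PP_\gamma$ is c.c.c. The main obstacle throughout is the successor step---more precisely, the claim that some condition forces $\dot S$ uncountable---since the limit case is a routine $\Delta$-system computation and everything else is bookkeeping.
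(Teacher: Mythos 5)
Your proof is correct: the two-step lemma (with the key step of producing a condition forcing $\dot S$ to be uncountable via the ``only countably many possible values of $\dot e(n)$'' argument) combined with the $\Delta$-system reduction to a bounded stage at limits is exactly the classical Solovay--Tennenbaum argument. Note that the paper does not prove this theorem at all --- it is quoted as background with a citation --- so your reconstruction is simply the standard proof of the cited result, and it is sound.
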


A Suslin tree is a tree of height $\omega_1$ with no uncountable chains and antichains. Forcing with a Suslin tree $T$ (with the reverse order) is a c.c.c. forcing and in the forcing extension $T$ is no longer Suslin. Not being a Suslin tree is a $\Sigma_1(\omega_1)$-property and so is upwards absolute to forcing extensions preserving $\omega_1$. As c.c.c. forcings preserve $\omega_1$, iterating forcing with Suslin trees produces models in which there are no Suslin tree and hence in which Suslin's hypothesis holds.

\begin{thm}[Solovay-Tennenbaum, \cite{solten}]
There is a c.c.c. forcing $\PP$ so that $V^\PP\models``\text{Suslin's hypothesis}"$.
\end{thm}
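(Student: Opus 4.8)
The plan is to iterate, with finite support, the forcing that adds a cofinal branch through a Suslin tree. Working over $V$, I fix a regular cardinal $\kappa$ with $\mu^{\aleph_1}<\kappa$ for all $\mu<\kappa$; for instance $\kappa=(2^{\aleph_1})^{+}$ works, since any $\mu<\kappa$ then satisfies $\mu^{\aleph_1}\leq(2^{\aleph_1})^{\aleph_1}=2^{\aleph_1}<\kappa$, so that no appeal to $\GCH$ is needed. I build by recursion a finite support iteration $\langle\PP_\alpha,\Q_\beta\mid\alpha\leq\kappa,\ \beta<\kappa\rangle$ in which, at stage $\beta$, a bookkeeping function hands me a $\PP_\beta$-name $\dot T$; if $\dot T$ denotes a Suslin tree in $V^{\PP_\beta}$ I let $\Q_\beta$ be that tree ordered by reverse inclusion, and otherwise I let $\Q_\beta$ be trivial. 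By the first Solovay--Tennenbaum theorem $\PP:=\PP_\kappa$ is c.c.c., hence preserves $\omega_1$ and all cardinals, and I claim $V^{\PP}\models$ ``Suslin's hypothesis''.

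Next I set up the reflection machinery. A Suslin tree has countable levels, hence size $\aleph_1$, so it can be coded as a subset of $\omega_1$. An induction using the arithmetic hypothesis on $\kappa$ gives $|\PPa|<\kappa$ for all $\alpha<\kappa$: finite support keeps limit stages small because $\kappa$ is regular, and $\PP_{\alpha+1}=\PPa\ast\Q_\alpha$ adds at most $|\PPa|^{\aleph_1}<\kappa$ conditions, since $\PPa$ is c.c.c. and $\Q_\alpha$ has size $\aleph_1$. As $\PP$ is c.c.c. and $\cof(\kappa)>\aleph_1$, every nice $\PP$-name for a subset of $\omega_1$ uses only $\aleph_1$ many conditions, each of finite support, so its total support is bounded below $\kappa$; therefore every subset of $\omega_1$ in $V^{\PP}$ already lies in $V^{\PPa}$ for some $\alpha<\kappa$. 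I also record the monotonicity of Suslinity: if $T\in V^{\PPa}$ is Suslin in $V^{\PP}$, then $T$ is Suslin in $V^{\PP_\beta}$ for all $\alpha\leq\beta\leq\kappa$, because an uncountable chain or antichain appearing at an intermediate stage would persist into $V^{\PP}$ (chains and antichains remain such, and $\omega_1$ is preserved), contradicting Suslinity there.

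Then I fix the bookkeeping. Since $|\PPa|<\kappa$, the number of nice $\PPa$-names for subsets of $\omega_1$ is at most $|\PPa|^{\aleph_1}<\kappa$ at each stage, so there are at most $\kappa$ such names overall. Simultaneously with the recursion I fix a surjection $g\colon\kappa\to\kappa\times\kappa$ whose first coordinate at $\beta$ is $\leq\beta$, and at stage $\beta$ I let $\Q_\beta$ act on the name indexed by $g(\beta)$ relative to the already-built $\PP_{g(\beta)_0}$, arranging that each name is visited at some stage past its point of definition. The kill step combines the previous theorem with the quoted $\Sigma_1(\omega_1)$ absoluteness: if $T=\dot T^{G_\beta}$ is Suslin in $V^{\PP_\beta}$, then forcing with $\Q_\beta=\dot T$ adds a cofinal branch, so $T$ is no longer Suslin in $V^{\PP_{\beta+1}}$, and since ``$T$ is not a Suslin tree'' is $\Sigma_1(\omega_1)$ and $\omega_1$ is preserved, $T$ stays non-Suslin in $V^{\PP}$. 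Finally, suppose toward a contradiction that some $T$ is Suslin in $V^{\PP}$. By reflection $T\in V^{\PPa}$ for some $\alpha<\kappa$; by monotonicity $T$ is still Suslin at the bookkeeping stage $\beta\geq\alpha$ at which its name is visited, so it is killed at stage $\beta$, contradicting that $T$ is Suslin in $V^{\PP}$. Hence $V^{\PP}$ has no Suslin trees.

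The conceptual core --- that finite support preserves the c.c.c., that a Suslin tree is destroyed by forcing a branch, and that non-Suslinity is upward absolute --- is already in hand. The part demanding genuine care, and the main obstacle, is the interplay of the bookkeeping with the cardinal arithmetic: one must choose $\kappa$ and bound $|\PPa|$ and the number of relevant names uniformly so that the iteration closes off in exactly $\kappa$ steps in $\ZFC$ alone, while defining the bookkeeping recursively, because the names available at stage $\alpha$ only become meaningful once $\PPa$ has been constructed. Making this self-referential recursion address every name cofinally is the delicate, if routine, point.
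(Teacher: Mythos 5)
Your proposal is correct and takes exactly the approach the paper sketches for this theorem: a finite support iteration of length a suitable $\kappa$ in which one forces with Suslin trees handed over by a bookkeeping device, using the Solovay--Tennenbaum c.c.c.\ iteration theorem, the fact that forcing with a Suslin tree destroys its Suslinity, and the upward $\Sigma_1(\omega_1)$-absoluteness of ``not Suslin'' under $\omega_1$-preserving (here c.c.c.) forcing. The details you supply --- the choice $\kappa=(2^{\aleph_1})^{+}$, the bound $|\PP_\alpha|<\kappa$, the nice-name reflection of subsets of $\omega_1$ to initial stages, and the recursive bookkeeping --- are the standard and correct way to close off the iteration.
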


\subsection{Iterations of Proper Forcings}
The class of forcings with the countable chain condition is rather small, so not suitable in all cases. Shelah discovered the beautiful notion of proper forcing which is large enough to include both c.c.c. and $\sigma$-closed forcing, but nonetheless all such forcings preserve $\omega_1$.

\begin{defn}[Shelah,\cite{shelahbook}]
 A forcing $\PP$ is \textbf{proper} if for any large enough regular $\theta$ and any countable $X\prec H_\theta$ with $\PP\in X$, whenever $p\in \PP\cap X$ then there is some $q\leq p$ with 
 $$q\forces \check X\cap\Ord=\check X[\dot G]\cap \Ord.$$
\end{defn}

Shelah proved a famous iteration theorem for proper forcings. Though, as finite support iterations of non-c.c.c. forcings usually collapse $\omega_1$, the preferred support in this instance is countable support.

\begin{thm}[Shelah,\cite{shelahbook}]
Suppose $\standarditeration$ is a countable support iteration of proper forcings. Then $\PP_\gamma$ is proper.
\end{thm}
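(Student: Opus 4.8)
The plan is to prove the theorem by induction on the length $\gamma$, but to make the induction survive limit stages one must carry a stronger \emph{extension} statement rather than bare properness. Fix a sufficiently large regular $\theta$ and a countable $X \prec \Htheta$ containing $\standarditeration$ together with all relevant parameters. The strengthened claim is: whenever $\beta \leq \gamma$ with $\beta \in X \cup \{\gamma\}$, $q \in \PPb$ is $(X,\PPb)$-generic, $p \in \PP_\gamma \cap X$, $q \leq p \res \beta$, and $q \forces p \res [\beta,\gamma) \in X[\dot G_\beta]$, then there is an $(X,\PP_\gamma)$-generic $r \leq p$ with $r \res \beta = q$. Taking $\beta = 0$ and $q = \1$ recovers properness of $\PP_\gamma$, since by the definition above an $(X,\PP)$-generic condition is exactly one forcing $\check X \cap \Ord = \check X[\dot G] \cap \Ord$.

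For the base and successor steps: $\gamma = 0$ is trivial, and for $\gamma = \delta+1$ we have $\PP_\gamma = \PP_\delta * \Q_\delta$. If $\beta = \gamma$ take $r = q$; otherwise apply the induction hypothesis at $\delta$ to get an $(X,\PP_\delta)$-generic $r' \leq p \res \delta$ with $r' \res \beta = q$. The crucial point is that $(X,\PP_\delta)$-genericity of $r'$ forces $X[\dot G_\delta] \prec \Htheta^{V[\dot G_\delta]}$ to be countable with $X[\dot G_\delta]\cap\Ord = X\cap\Ord$; since $\Q_\delta$ is forced to be proper, working below $r'$ one chooses a $\PP_\delta$-name $\dot r_\delta$ for a condition in $\Q_\delta$ below $p(\delta)$ that is $(X[\dot G_\delta],\Q_\delta)$-generic, and sets $r = r' \cc \dot r_\delta$. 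This is precisely the statement that a two-step iteration of proper forcings is proper.

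The limit case splits by cofinality. If $\op{cf}(\gamma) > \omega$ then, as $X$ is countable, $\delta^{*} := \sup(X \cap \gamma) < \gamma$, so every condition in $\PP_\gamma \cap X$ and every element of $X$ lying in a dense subset of $\PP_\gamma$ has support bounded below $\gamma$; the direct-limit structure then reduces the construction of $r$ to a stage $< \gamma$ and the induction hypothesis applies directly. The genuinely difficult case is $\op{cf}(\gamma) = \omega$, where $\PP_\gamma$ is the inverse limit and supports may be cofinal. Pick an increasing $\langle \beta_n \mid n<\omega\rangle \in X$ cofinal in $\gamma$ with $\beta_0 = \beta$, and enumerate externally the dense open subsets of $\PP_\gamma$ in $X$ as $\langle D_n \mid n<\omega\rangle$. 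One builds by recursion a decreasing sequence $\langle p_n \mid n<\omega\rangle$ in $X$ with $p_{-1} = p$, together with $(X,\PP_{\beta_n})$-generic conditions $q_n \in \PP_{\beta_n}$ satisfying $q_0 = q$, $q_{n+1}\res\beta_n = q_n$, $p_n \in D_n \cap X$ with $p_n \leq p_{n-1}$, $q_{n+1} \leq p_n \res \beta_{n+1}$, and the tail commitment $q_{n+1} \forces p_n \res [\beta_{n+1},\gamma) \in X[\dot G_{\beta_{n+1}}]$. At each step one passes to $V[G_{\beta_n}]$ below $q_n$, where $X[G_{\beta_n}] \prec \Htheta^{V[G_{\beta_n}]}$ is countable, uses elementarity to locate $p_n \in D_n \cap X$ below $p_{n-1}$ whose tail lands in $X[G_{\beta_n}]$, applies the induction hypothesis to the tail iteration $\PP\res[\beta_n,\beta_{n+1})$ (of length $<\gamma$, in $V[G_{\beta_n}]$) to extend $q_n$ to $q_{n+1}$ preserving the commitment, and reads everything back through $\PP_{\beta_n}$-names. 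Finally $r$ is the unique condition with $r \res \beta_n = q_n$ for all $n$; its support is the countable union of the supports of the $q_n$, so $r \in \PP_\gamma$, and since $r \leq p_n \in D_n \cap X$ for every $n$, the condition $r$ is $(X,\PP_\gamma)$-generic with $r \res \beta = q$.

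The main obstacle is exactly this cofinality-$\omega$ limit: a single limit condition $r$ must be threaded through infinitely many dense sets while remaining below each approximation $p_n$. This cannot be arranged with properness alone as the induction hypothesis; the tail commitment $q_{n+1} \forces p_n \res [\beta_{n+1},\gamma) \in X[\dot G_{\beta_{n+1}}]$ is precisely the bookkeeping that chains the inductively produced generic extensions of successive blocks together so that the inverse limit stays below all $p_n$. Verifying that this commitment can be maintained — that the tails of the chosen $p_n$ genuinely fall into the iterated submodel $X[G_{\beta_n}]$ — is the technical heart of the theorem and the reason the strengthened extension statement, rather than properness by itself, must be propagated through the recursion.
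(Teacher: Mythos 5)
The paper gives no proof of this theorem (it cites Shelah's book), so your proposal has to be measured against the standard argument, whose overall shape you have correctly reproduced: a two-parameter extension statement proved by induction on $\gamma$, with an $\omega$-recursion threading the dense sets of $X$ at limit stages. The genuine gap is that the extension statement you propagate is too weak to drive that recursion. You require $p\in\PP_\gamma\cap X$ (an actual ground-model condition lying in $X$), demand $q\le p\res\beta$, and conclude $r\le p$; note that for such $p$ your ``tail commitment'' $q\forces p\res[\beta,\gamma)\in X[\dot G_\beta]$ is vacuous, since $p\res[\beta,\gamma)\in X\subseteq X[\dot G_\beta]$ already. But the conditions $p_n$ produced at stage $n$ of the limit recursion are chosen inside $V[G_{\beta_n}]$ by a density-and-elementarity argument and genuinely depend on $G_{\beta_n}$: they are $\PP_{\beta_n}$-names $\dot p_n$, not decided by $q_n$, and they cannot be chosen in advance in $V$. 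Indeed, given $q_n$ there is in general \emph{no} $p'\in D_n\cap X$ with $p'\le p_{n-1}$ and $q_n\le p'\res\beta_n$ (already for $\Q_0$ Cohen forcing: every condition is generic, and a dense set may demand that $p'(0)$ be longer than $q_n(0)$); this is exactly why the naive induction on bare properness fails. Consequently ``$q_{n+1}\le p_n\res\beta_{n+1}$'' and ``$r\le p_n$'' are not even meaningful, and your induction hypothesis, which accepts only actual conditions and literal $\le$, cannot be invoked at the recursion step. Nor can you first extend $q_n$ to decide $\dot p_n$, because the construction needs $q_{n+1}\res\beta_n=q_n$ exactly for the limit condition $r$ to exist. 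The correct hypothesis to propagate takes as input a $\PP_\beta$-name $\dot p$ with $q\forces_\beta(\dot p\in\PP_\gamma\cap X[\dot G_\beta]\wedge\dot p\res\beta\in\dot G_\beta)$ and outputs an $(X,\PP_\gamma)$-generic $r$ with $r\res\beta=q$ and $r\forces\dot p\in\dot G_\gamma$ (from which properness follows, since any common extension of $r$ and $p$ is generic). Your closing paragraph correctly identifies this as the heart of the matter, but the statement you actually formulated does not implement it.

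Two further steps fail as written. First, you apply the induction hypothesis ``to the tail iteration $\PP\res[\beta_n,\beta_{n+1})$ \dots in $V[G_{\beta_n}]$''. That is circular: it is not known at that point (and is essentially the content of the theorem) that the quotient of a countable support iteration of proper forcings is again, in $V[G_{\beta_n}]$, such an iteration of proper forcings. The two-parameter form of the induction hypothesis exists precisely to avoid quotients: apply it in $V$ to the initial segment $\PP_{\beta_{n+1}}$ (length $\beta_{n+1}<\gamma$, an element of $X$) with starting stage $\beta_n$. Second, when $\cof(\gamma)>\omega$ the problem does not ``reduce to a stage $<\gamma$ [where] the induction hypothesis applies directly'': the members of the dense sets $D\in X$ that lie in $X$ have supports cofinal in $\delta^{*}=\sup(X\cap\gamma)$, so no single $\delta\in X\cap\gamma$ captures them; moreover $\delta^{*}\notin X$, and traces of dense subsets of $\PP_\gamma$ on $\PP_{\delta^{*}}$ need not be dense, so $(X,\PP_{\delta^{*}})$-genericity is neither obtainable from the induction hypothesis nor sufficient. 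The standard proof treats both limit cases uniformly: run your $\cof(\gamma)=\omega$ recursion along an increasing sequence $\langle\beta_n\mid n<\omega\rangle$ of elements of $X\cap\gamma$ with supremum $\delta^{*}$, and let $r$ be trivial on $[\delta^{*},\gamma)$; this works because the supports of the generically chosen $p_n$ are forced to lie in $X[\dot G_{\beta_n}]\cap\gamma=X\cap\gamma\subseteq\delta^{*}$, using that $q_n$ is $(X,\PP_{\beta_n})$-generic.
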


An Aronszajn tree $T$ is a tree of height $\omega_1$ with all countable levels and no cofinal branch. For a tree $T$ of height $\omega_1$ and $A\subseteq \omega_1$, let $T\res A$ denote the tree with nodes of a level $\alpha$ of $T$ with $\alpha\in A$ and the tree order inherited from $T$. Two trees $S, T$ of height $\omega_1$ are club-isomorphic iff there is a club $C\subseteq\omega_1$ so that $S\res C\cong T\res C$ as partial orders. Given two Aronszajn trees $S, T$, Abraham-Shelah discovered a proper forcing $\PP(T, S)$ which forces $S$ and $T$ to be club-isomorphic. Note that the property ``$S, T$ are club-isomorphic" is $\Sigma_1(S, T, \omega_1)$ and thus upwards-absolute to any $\omega_1$-preserving forcing extension. 

\begin{thm}[Abraham-Shelah, \cite{abrshelahtrees}]\label{semiproperiterationthm}
There is a proper forcing $\PP$ so that 
$$V^\PP\models``\text{Any two Aronszajn trees }S, T\text{ are club-isomorphic}".$$
\end{thm}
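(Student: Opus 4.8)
The plan is to run exactly the ``iterate a single-instance solution up to a closure point'' strategy sketched in the introduction, taking the Abraham--Shelah poset $\PP(T,S)$ as the single instance and exploiting the upward absoluteness of club-isomorphism to protect earlier stages. \emph{Setup.} I may assume $\GCH$ in the ground model: otherwise I first force with the $\sigma$-closed (hence proper) forcing $\Add(\omega_1,1)$, which adds no reals, forces $\diamondsuit$ and hence $\CH$, and arranges $2^{\aleph_1}=\aleph_2$; since a two-step iteration of proper forcings is proper, passing to this extension is harmless. Fix a bookkeeping surjection $F\colon\omega_2\to\omega_2\times\omega_2$ with each value attained cofinally, and build a countable support iteration $\langle\PP_\alpha,\Q_\beta\mid\alpha\le\omega_2,\ \beta<\omega_2\rangle$ as follows: at stage $\beta$ I use a fixed definable enumeration in $V^{\PP_\beta}$ of the Aronszajn trees appearing by that stage to read off from $F(\beta)$ a pair $(T,S)$ of Aronszajn trees of $V^{\PP_\beta}$, and let $\Q_\beta$ be a $\PP_\beta$-name for $\PP(T,S)$ (taking $\Q_\beta$ trivial when $F(\beta)$ codes no such pair). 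Set $\PP:=\PP_{\omega_2}$.

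\emph{Properness and cardinal arithmetic.} Each $\Q_\beta$ is proper by hypothesis, so Shelah's iteration theorem above gives that every $\PP_\alpha$, and in particular $\PP$, is proper; hence $\PP$ preserves $\omega_1$. Under $\CH$ the poset $\PP(T,S)$ has size $\le\aleph_1$, as its conditions are countable approximations to a club and to an isomorphism, so every iterand has size $\le\aleph_1$. By Shelah's analysis of countable support iterations of proper forcings of size $\le\aleph_1$, the iteration then preserves $\CH$ and $2^{\aleph_1}=\aleph_2$ at every stage and satisfies the $\aleph_2$-c.c. In particular each $V^{\PP_\beta}$ has only $\aleph_2$ many Aronszajn trees, so the bookkeeping is well defined.

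\emph{Catching all pairs and absoluteness.} Working in $V^{\PP}$, let $S,T$ be arbitrary Aronszajn trees. Each is coded by a subset of $\omega_1$, and by the $\aleph_2$-c.c. any name for such a set is decided by $\aleph_1$ antichains of size ${<}\,\aleph_2$; hence $S,T\in V^{\PP_\alpha}$ for some $\alpha<\omega_2$. By the cofinal bookkeeping there is $\beta\ge\alpha$ with $\Q_\beta=\PP(T,S)$ as computed in $V^{\PP_\beta}$, so $V^{\PP_{\beta+1}}$ contains a club $C\subseteq\omega_1$ and an isomorphism $f\colon S\res C\to T\res C$. Now ``$S$ and $T$ are club-isomorphic'' is $\Sigma_1(S,T,\omega_1)$: it asserts the existence of $C,f\in\Htwo$ with $C$ closed unbounded in $\omega_1$ and $f$ an isomorphism of $S\res C$ onto $T\res C$, and the matrix of this statement is absolute between transitive models computing $\omega_1$ correctly. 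Since the tail of $\PP$ from stage $\beta+1$ is proper it preserves $\omega_1$, so $C$ remains a club and $f$ remains an isomorphism; the statement is thus upward absolute and holds in $V^{\PP}$. As $S,T$ were arbitrary, $V^{\PP}\models$``any two Aronszajn trees are club-isomorphic''.

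I expect the genuine content to lie not in the final absoluteness argument but in the infrastructure that makes it run: that the whole iteration remains proper—so that $\omega_1$ is never collapsed and each single-step gain is actually preserved—and that $\CH$ together with the $\aleph_2$-c.c. is maintained throughout, guaranteeing that every pair of trees in $V^{\PP}$ already appears, and is scheduled, at some earlier stage. These are precisely Shelah's iteration theorem and his preservation results for countable support iterations of small proper forcings, which I am taking as given; the remaining bookkeeping/reflection estimate is then routine.
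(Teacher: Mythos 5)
The paper does not actually prove this theorem---it is cited from Abraham--Shelah---but the strategy its introduction sketches (iterate the single-instance proper poset $\PP(T,S)$ with countable support along an $\omega_2$-length bookkeeping, invoke Shelah's properness iteration theorem, use $\CH$ plus the $\aleph_2$-c.c.\ to catch every pair of trees at some stage, and finish by upward absoluteness of the $\Sigma_1(S,T,\omega_1)$ statement ``$S,T$ are club-isomorphic'') is exactly what you execute, and your argument is correct modulo the standard Shelah preservation facts you explicitly black-box. One slip to repair in the setup: $\Add(\omega_1,1)$ forces $\diamondsuit$ and hence $\CH$, but it does not in general arrange $2^{\aleph_1}=\aleph_2$ (if $2^{\aleph_1}>\aleph_2$ in $V$, a nice-name count shows this persists, and then intermediate models have too many Aronszajn trees for your bookkeeping); this is routinely fixed by following $\Add(\omega_1,1)$ with the ${<}\,\omega_2$-closed, hence proper, collapse $\Col(\omega_2,2^{\aleph_1})$.
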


We remark that Suslin's hypothesis is an immediate consequence of ``any two Aronszajn trees are club-isomorphic". There provably is an Aronszajn tree which is \textit{special}, i.e. the union of countably many antichains. Such a tree cannot be club-isomorphic to a Suslin tree. 

\subsection{Iterations of Semiproper Forcings}

Later, Shelah proved another iteration theorem for the even larger class of semiproper forcings.

\begin{defn}[Shelah, \cite{shelahbook}]
 A forcing $\PP$ is \textbf{semiproper} if for any large enough regular $\theta$ and any countable $X\prec H_\theta$ with $\PP\in X$, whenever $p\in \PP\cap X$ then there is some $q\leq p$ with 
 $$q\forces \check X\cap\omega_1=\check X[\dot G]\cap \omega_1.$$
\end{defn}

From now on, we will denote $X\subseteq Y\wedge X\cap\omega_1=Y\cap\omega_1$ by $X\sqsubseteq Y$. So for example above we have $q\forces \check X\sqsubseteq \check X[\dot G]$.

\begin{thm}[Shelah]\label{semiproperiterationthm}
Suppose $\standarditeration$ is a $\RCS$-iteration of semiproper forcings. Then $\PP_\gamma$ is semiproper.
\end{thm}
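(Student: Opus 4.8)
The plan is to prove that $\RCS$-iterations of semiproper forcings are semiproper by induction on the length $\gamma$ of the iteration, simultaneously establishing the stronger statement needed to make the induction go through. The naive induction hypothesis ``$\PP_\alpha$ is semiproper for all $\alpha<\gamma$'' is too weak at limit stages, so instead I would prove a uniform statement about generic conditions: for a suitable internally approachable chain of countable elementary submodels, and for any condition $p$ in an initial submodel, there is a master condition below $p$ that is $(X,\PP_\gamma)$-semigeneric in the sense that it forces $\check X \sqsubseteq \check X[\dot G]$. Tracking such a tower of models is the standard device that replaces the single-model statement in the definition of semiproper.

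First I would handle the two easy cases. The successor step $\gamma = \alpha+1$ reduces to the two-step iteration lemma: if $\PP_\alpha$ is semiproper and $\1\forces_{\PP_\alpha}``\dot\QQ_\alpha$ is semiproper'', then $\PP_\alpha * \dot\QQ_\alpha$ is semiproper. This is proved by first extending $p$ to a $\PP_\alpha$-semigeneric condition $q_0$, then using semiproperness of $\dot\QQ_\alpha$ in the extension by $\Ga$ to find a name for a semigeneric condition in the second coordinate, and arguing that $\check X \cap \omega_1$ is preserved through both steps since $\omega_1$ itself is not collapsed. The case of $\gamma$ of cofinality $>\omega$ is also comparatively mild: here the revised countable support is eventually an $\alpha$-support for some $\alpha<\gamma$ on a club below any given condition, so semiproperness factors through a proper initial segment.

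The main difficulty, as always for these theorems, is the limit case of countable cofinality, where one must diagonalize against countably many dense sets while preserving $\omega_1$. Here I would fix an increasing continuous tower $\langle X_n \mid n<\omega\rangle$ of countable elementary submodels of some $H_\theta$ with $\PP_\gamma \in X_0$, whose union is $X$, chosen so that the sequence itself, or appropriate initial segments, belong to the next model (internal approachability). I would choose an increasing sequence $\gamma_n \to \gamma$ cofinal in $\gamma$ with each $\gamma_n \in X_{n+1}$, and then build a decreasing sequence of conditions $p = p_0 \geq p_1 \geq \cdots$ such that each $p_{n+1}$ is an $(X_n,\PP_{\gamma_{n+1}})$-semigeneric master condition extending $p_n$, obtained by applying the inductive hypothesis at stage $\gamma_{n+1}$ together with the successor-step technology. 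The heart of the matter is to verify that the ``limit'' $p_\omega$, amalgamated via the $\RCS$ support structure so that its support is correctly a countable set, is a genuine condition in $\PP_\gamma$ and remains semigeneric for $X = \bigcup_n X_n$. The critical point where $\omega_1$-preservation is used is in checking that $\sup(X\cap\omega_1)$ is not in the generic's range of collapsed ordinals: because each name for a countable ordinal decided below $p_{n+1}$ lands in $X_n$ by semigenericity, the condition $p_\omega$ forces $\check X \cap \omega_1 = \check X[\dot G]\cap\omega_1$, so that in particular $\delta := X\cap\omega_1$ remains an ordinal rather than being collapsed.

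I expect the genuine obstacle to be the bookkeeping that guarantees $p_\omega$ has revised countable support and is a legitimate $\PP_\gamma$-condition: one must ensure that the supports of the $p_n$ do not accumulate uncountably, which is exactly what the $\RCS$ machinery and the requirement that $\sup(X_n\cap\gamma)$ be tracked by names in $X_{n+1}$ are designed to secure. The delicate interplay is that $\RCS$ support is defined relative to the generic, so ``$p_\omega$ has countable support'' is itself a statement that must be forced, and verifying it requires the very semigenericity being constructed --- this circularity is resolved by the simultaneous induction and by working with the tower rather than a single model.
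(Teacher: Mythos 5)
The paper does not actually prove this theorem --- it cites it to Shelah's book --- but its surrounding discussion already pinpoints exactly where your proposal breaks down. The genuine gap is your treatment of the limit cases. You call the case $\cof(\gamma)>\omega$ ``comparatively mild,'' claiming the support is eventually an $\alpha$-support for some $\alpha<\gamma$ so that semiproperness ``factors through a proper initial segment.'' This is backwards: unlike proper forcings, semiproper forcings can change uncountable cofinalities to $\omega$, so a length (or limit point) of uncountable cofinality in $V$ may become $\omega$-cofinal in an intermediate extension $V[G_\alpha]$, and the construction cannot be confined to any proper initial segment. This is precisely what separates revised countable support from countable support, and the paper notes the theorem is \emph{false} for countable support; any argument that handles the uncountable-cofinality case by reduction to initial segments would apply verbatim to CS iterations, so it cannot be correct.

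The same phenomenon defeats your linear fusion. Since semiproperness only guarantees $\check X\sqsubseteq\check X[\dot G]$ on $\omega_1$, not $X[\dot G]\cap\Ord=X\cap\Ord$, the model $X[G_\alpha]$ acquires new ordinals below $\gamma$ (``new relevant indices,'' in the paper's words), and dense sets of the tail iteration sitting at those coordinates cannot be anticipated by a decreasing sequence $p_0\geq p_1\geq\cdots$ and a cofinal sequence $\langle\gamma_n\mid n<\omega\rangle$ fixed in $V$; indeed, when $\gamma$ becomes $\omega$-cofinal only after forcing, the needed cofinal sequence does not even exist in $V$. This is exactly why the known proofs abandon linearity: Shelah's RCS machinery, or equivalently the nested antichains and fusion structures of Miyamoto that this paper uses for its own main theorem, replace the chain $\langle p_n\mid n<\omega\rangle$ by a \emph{tree} of conditions whose branches are chosen generically. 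Your final paragraph correctly senses a circularity, but the resolution you offer (simultaneous induction plus an internally approachable tower) is the proper-forcing resolution; it does not survive the failure of $\Ord$-preservation, and repairing it is the actual content of the theorem.
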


Once again, the notion of support had to be changed. In the argument of Theorem \ref{semiproperiterationthm} it is crucial that if $\alpha<\gamma$ and $G_\alpha$ is $\PP_\alpha$-generic over $V$, then the tail iteration
$\langle \PP_{\alpha, \xi}, \Q_{\beta}\mid\xi\leq\gamma,\beta<\gamma\rangle$ is still a $\RCS$-iteration. This can fail for countable support iterations as, unlike proper forcings, semiproper forcings can turn regular cardinals into cardinals of countable cofinality. In fact, Theorem \ref{semiproperiterationthm} fails if $\RCS$-support is replaced with countable support.

Suppose $\mathcal I$ is an ideal on $\omega_1$. An $\mathcal I$-antichain is a set $\mathcal A\subseteq\mathcal P(\omega_1)-\mathcal I$ so that $S\cap T\in\mathcal I$ for any $S\neq T\in\mathcal A$. The ideal $\mathcal I$ is saturated if for all $\mathcal I$-antichains $\mathcal A$ we have $\vert\mathcal A\vert{\leq}\omega_1$. 

\begin{thm}[Shelah, see \cite{nssat} for a proof]\label{nssatthm}
Assume there is a Woodin cardinal. Then there is a semiproper forcing $\PP$ so that 
$$V^\PP\models``\NS\text{ is saturated}".$$
\end{thm}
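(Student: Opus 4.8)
The plan is to apply the Semiproper Iteration Theorem (Theorem \ref{semiproperiterationthm}) as the engine, reducing the construction of the desired $\PP$ to two tasks: first, producing a semiproper forcing which makes $\NS$ saturated under the assumption that $\NS$ is already \emph{presaturated} (or more precisely, under the relevant consequence of the Woodin cardinal), and second, arranging through the iteration that this local effect becomes a global, stable property. Concretely, I would fix a Woodin cardinal $\delta$ and build an $\RCS$-iteration $\langle \PP_\alpha, \Q_\beta \mid \alpha \leq \delta, \beta < \delta\rangle$ of length $\delta$, where at each stage the iterand $\Q_\beta$ is chosen (via a suitable bookkeeping function guessing $\PP_\beta$-names) to be a semiproper forcing addressing a potential large $\NS$-antichain appearing in $V^{\PP_\beta}$. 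Since each $\Q_\beta$ is semiproper and we use $\RCS$-support, Theorem \ref{semiproperiterationthm} guarantees $\PP_\delta$ is semiproper, hence preserves $\omega_1$ and stationary sets.

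\medskip

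The crux is the design of the individual iterands. Here the classical approach, due to Foreman-Magidor-Shelah and Shelah, is to observe that if $\mathcal A = \langle S_\xi \mid \xi < \omega_2\rangle$ is an $\NS$-antichain of size $\omega_2$, one can \emph{shoot a club} through the complement of all but $\omega_1$-many $S_\xi$ by a semiproper forcing, thereby killing the antichain; the semiproperness of this club-shooting is exactly where one uses that the relevant sets are stationary and that the forcing is $<\!\omega_1$-strategically closed or can be factored through a semiproper collapse. I would have $\Q_\beta$ either be such a club-shooting forcing when the bookkeeping presents a genuine large antichain, or trivial otherwise. The point of iterating up to a Woodin cardinal $\delta$ is twofold: the Woodinness supplies, via a reflection/catch-your-tail argument, the ability to anticipate all antichains that survive, and it ensures (through the associated stationary tower or generic elementary embedding technology) that in $V^{\PP_\delta}$ there is no surviving $\NS$-antichain of size $\omega_2$, so that $\NS$ is saturated.

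\medskip

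The main obstacle, and the step I expect to demand the most care, is verifying that the club-shooting iterands are genuinely semiproper and that the bookkeeping together with the Woodin cardinal really does catch every potential antichain. The difficulty is that shooting a club through the complement of a family of stationary sets can destroy stationarity of sets we wish to preserve, and ensuring semiproperness forces one to interleave the club-shooting with semiproper collapses $\Col(\omega_1, \lambda)$ so that countable elementary submodels can be extended without adding ordinals below $\omega_1$. I would therefore spend most of the effort checking, for a fixed countable $X \prec H_\theta$ with $\Q_\beta \in X$ and $p \in \Q_\beta \cap X$, that a master condition $q \leq p$ exists forcing $\check X \sqsubseteq \check X[\dot G]$; this is where the saturation hypothesis on the \emph{ground-model} ideal, supplied by the Woodin cardinal through a precipitousness/genericity argument, is indispensable. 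Once semiproperness of each iterand is secured, the global conclusion follows by combining Theorem \ref{semiproperiterationthm} with a standard reflection argument showing that any counterexample antichain in the final model would have to appear cofinally often and thus be destroyed at some stage $\beta < \delta$.
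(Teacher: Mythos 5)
Your overall skeleton---an $\RCS$-iteration of length a Woodin cardinal $\delta$, with iterands that seal maximal $\NS$-antichains, glued together by the semiproper iteration theorem (Theorem \ref{semiproperiterationthm}) and finished by a reflection argument catching every antichain of the final model---is indeed the architecture of Shelah's proof. The genuine gap is your treatment of semiproperness of the iterands. You propose to \emph{verify}, stage by stage, that each sealing/club-shooting forcing is semiproper, by interleaving collapses $\Col(\omega_1,\lambda)$ and running a master-condition argument powered by a ``precipitousness/genericity'' property of the ground-model ideal supplied by the Woodin cardinal. This step fails. The sealing forcing of a maximal $\NS$-antichain is \emph{not} semiproper in general, and this remains so in the presence of a Woodin cardinal: Woodin-ness is preserved by small forcing, whereas semiproperness of sealing forcings is a fragile reflection property at $\omega_2$ which small forcing can destroy (for instance by forcing $\square_{\omega_1}$). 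Likewise, a Woodin cardinal does not make $\NS$ precipitous, presaturated, or saturated in $V$---those are exactly the kind of properties one is trying to force---so there is no ``saturation hypothesis on the ground-model ideal'' available at a given stage. Interleaving $\sigma$-closed collapses does not repair this; indeed after a collapse the antichain handed to you by the bookkeeping need not even remain maximal, and the sealing forcing of a non-maximal antichain destroys stationary sets and so cannot be semiproper. (Your side remark that the sealing forcing is $<\omega_1$-strategically closed is also off: it is in general not even proper.) A consistency-strength sanity check shows your mechanism cannot be right: a precipitous ideal on $\omega_1$ is equiconsistent with a measurable cardinal, so if per-stage master conditions could be extracted from precipitousness-type hypotheses, saturation of $\NS$ would be forceable from far less than a Woodin cardinal, contradicting Theorem \ref{innermodelwithwoodinfromsatidealthm}.

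The actual proof inverts your quantifiers, and this is precisely what the paper's discussion flags (``an instance of the sealing forcing is not semiproper in general''). The iteration is defined so that a sealing forcing is used at stage $\beta$ \emph{only if it happens to be semiproper in $V^{\PP_\beta}$}; otherwise the stage is trivial or a harmless collapse. No per-stage verification is attempted, because none is possible. All of the work, and the only use of Woodin-ness, lies in the final verification: given a maximal $\NS$-antichain $\mathcal A$ of size $\geq\omega_2$ in $V^{\PP_\delta}$, one chooses an extender witnessing Woodin-ness of $\delta$ for the relevant functions, forms $j\colon V\to M$ with critical point $\kappa$, and uses the induced generic embedding to prove that the sealing forcing of the restriction of $\mathcal A$ to $V^{\PP_\kappa}$ \emph{was} semiproper there; hence the bookkeeping sealed it at stage $\kappa$, so $\mathcal A$ could not have grown into a counterexample, a contradiction. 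In other words, semiproperness of the relevant sealing forcings is the \emph{conclusion} of the reflection argument at reflecting stages---this is what ``often enough'' means in the paper's sketch---not a hypothesis secured at every stage of the construction.
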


If $\mathcal A$ is a maximal $\NS$-antichain then the sealing forcing $\mathcal S_{\mathcal A}$ is a natural stationary-set-preserving forcing which turns $\mathcal A$ into a maximal $\NS$-antichain of size ${\leq}\omega_1$ and the statement ``$\mathcal A$ is a maximal antichain of size ${\leq}\omega_1$" turns out to be $\Sigma_1(\mathcal A, \omega_1)$. Now, an instance of the sealing forcing is not semiproper in general, but Shelah shows that when iterating up to a Woodin cardinal and using a sealing forcing only when it is semiproper, it can be arranged that often enough sealing forcings are semiproper that in the end, $\NS$ is saturated. 

\subsection{Iterations of Stationary-Set-Preserving Forcing}

So what are the limits of iteration theorems? We have 
$$\text{c.c.c}\Rightarrow\text{proper}\Rightarrow\text{semiproper}\Rightarrow\text{stationary set preserving}.$$
and none of the implications can be reversed. However, while there are always non-c.c.c. proper forcings and non-proper semiproper forcings, consistently the class of semiproper forcings can agree with the class of stationary set preserving forcing, so these two notions are quite close. Nonetheless, there is no analogue of Theorem \ref{semiproperiterationthm} for stationary set preserving forcings. Consistently, a counterexample can be given along the lines of the discussion of Theorem \ref{nssatthm}. In the argument, the Woodin cardinal is used solely to verify that instances of sealing forcing are semiproper often enough, an inaccessible cardinal would suffice otherwise. But a Woodin cardinal is indeed required for the conclusion.

\begin{thm}[Steel, Jensen-Steel \cite{JensenSteelKWithoutAMeasurable}]\label{innermodelwithwoodinfromsatidealthm}
Suppose that there is a normal saturated ideal on $\omega_1$. Then there is an inner model with a Woodin cardinal.
\end{thm}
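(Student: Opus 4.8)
The plan is to argue by contradiction using inner model theory. Assuming there is \emph{no} inner model with a Woodin cardinal, I will invoke the core model $K$ below a Woodin, use the saturated ideal to manufacture a generic elementary embedding of $V$, and then transport this to a nontrivial elementary self-embedding of $K$, contradicting the rigidity of $K$. All of the genuinely hard work is imported from the cited core model theory; the role of the saturated ideal is only to supply the embedding.

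First I would set up the core model machinery. Under the anti-large-cardinal hypothesis that there is no inner model with a Woodin cardinal, the construction of Jensen--Steel \cite{JensenSteelKWithoutAMeasurable} produces the core model $K$ (below a Woodin, and crucially without assuming a measurable cardinal in $V$) together with its structural properties: $K$ is a definable, fine-structural, fully iterable inner model which is computed from its countably iterable mice; it is \emph{generically absolute}, meaning $K^{V[G]}=K^V$ for every set-generic $V[G]$; and it is \emph{rigid}, admitting no nontrivial elementary embedding $j\colon K\to K$. The last two facts are the ones I will use.

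Next I extract the embedding. Let $\mathcal I$ be a normal saturated ideal on $\omega_1$, and let $G$ be generic for the quotient $\mathcal P(\omega_1)/\mathcal I$. Saturation makes this forcing $\omega_2$-c.c., so in particular $\mathcal I$ is precipitous and the generic ultrapower $\Ult(V,G)$ is well-founded; identifying it with a transitive inner model $M\subseteq V[G]$ yields an elementary $j\colon V\to M$ with $\crit(j)=\omega_1$ and $j(\omega_1)=\omega_2^V$. Saturation further gives ${}^{\omega}M\cap V[G]\subseteq M$, so that $M$ and $V[G]$ have the same reals and hence the same countably iterable mice. I then transfer $j$ to $K$: by generic absoluteness $K^{V[G]}=K^V=K$, and since $M$ is $\omega$-closed in $V[G]$ the computation of $K$ from countable mice is absolute between $M$ and $V[G]$, giving $K^M=K^{V[G]}=K$. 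By elementarity $j(K)=K^M=K$, so $j\restriction K\colon K\to K$ is a nontrivial elementary embedding with $\crit(j\restriction K)=\omega_1$. This contradicts the rigidity of $K$, and the contradiction forces the existence of an inner model with a Woodin cardinal.

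The main obstacle lies entirely on the inner-model-theoretic side and is exactly what is black-boxed in the cited paper: constructing $K$ below a Woodin \emph{without} a background measurable, and verifying the two properties the argument leans on, namely generic absoluteness $K^{V[G]}=K^V$ and the rigidity (no nontrivial elementary $j\colon K\to K$, a failure of which would itself yield inner-model large cardinals up to a Woodin). The combinatorial half---forming the generic ultrapower from the saturated ideal and pulling $j$ back to an embedding of $K$---is comparatively routine, the only point requiring care being the verification that the $\omega$-closure of $M$ in $V[G]$ genuinely forces $M$ to compute $K$ correctly, so that $K^M=K$ rather than merely $K^M\subseteq K$.
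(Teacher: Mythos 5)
This theorem is quoted in the paper as a known result of Steel and Jensen--Steel with a citation to \cite{JensenSteelKWithoutAMeasurable}; the paper itself contains no proof of it, so there is nothing internal to compare against. Your sketch correctly reconstructs the standard argument from that literature---generic ultrapower $j\colon V\to M\subseteq V[G]$ by $\mathcal P(\omega_1)/\mathcal I$, generic absoluteness $K^{V[G]}=K^V$, the computation $K^M=K^{V[G]}$ from the $\omega$-closure of $M$ in $V[G]$, and a contradiction with rigidity---with the genuinely hard inputs (existence, forcing absoluteness, and rigidity of $K$ below a Woodin cardinal, without a background measurable) correctly attributed to the cited core model theory. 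The only precision worth adding is that rigidity must be invoked \emph{in} $V[G]$, where $j\restriction K$ actually lives; this is legitimate precisely because $K^{V[G]}=K$ and ``there is no inner model with a Woodin cardinal'' persists to set-generic extensions.
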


So suppose we work in an model without an inner model with a Woodin cardinal, say $V=L$, and there is an inaccessible cardinal. One could then try to iterate instances of the Sealing forcing along a suitable bookkeeping up to $\kappa$. In light of Theorem \ref{innermodelwithwoodinfromsatidealthm}, this cannot result in a forcing extension in which $\NS$ is saturated. It follows that the iteration collapses $\omega_1$ at some point, yet instances of the sealing forcing are always stationary set preserving.

A much more serious example is due to Shelah.

\begin{thm}[Shelah \cite{shelahbook}]
There is a full support iteration $$\coloneqq \langle \PP_n, \Q_m\mid n\leq\omega, m<\omega\rangle$$ of stationary set preserving forcings so that $\PP_{\omega}$ collapses $\omega_1$.
\end{thm}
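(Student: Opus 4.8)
The plan is to construct the iteration explicitly rather than to appeal to any general principle. Since stationary-set-preservation is preserved under two-step iteration, every finite iterate $\PP_n$ will be stationary-set-preserving, hence $\omega_1$-preserving, so that $\omega_1^{V^{\PP_n}}=\omega_1^V$ for all $n<\omega$. Consequently any collapse of $\omega_1$ must be manufactured at the inverse limit $\PP_\omega$, and it suffices to arrange that $V^{\PP_\omega}$ contains a new cofinal map $e\colon\omega\to\omega_1^V$: such a map witnesses $\cof(\omega_1^V)=\omega$, i.e. that $\omega_1$ is collapsed. The task thus reduces to choosing iterands $\Q_n$, each stationary-set-preserving over $V^{\PP_n}$, whose full support limit adds a cofinal $\omega$-sequence through $\omega_1^V$.

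The subtlety is that the two demands pull against each other. The obvious attempt, shooting a decreasing sequence of clubs $C_0\supseteq C_1\supseteq\cdots$, cannot work for two reasons: an individual club-shooting forcing avoiding a stationary set is not stationary-set-preserving, and in any case a countable intersection of clubs is again a club, so no collapse results. The collapse must instead arise from an $\omega$-sequence of ordinals that is new at the limit. I would fix in $V$ a partition $\langle S_n\mid n<\omega\rangle$ of the countable limit ordinals into pairwise disjoint stationary sets (Solovay's splitting theorem) and design $\Q_n$ as a forcing with countable working parts and finitely many ``promises'' so that its generic commits to a countable ordinal $\delta_n$, with $\langle\delta_n\mid n<\omega\rangle$ forced strictly increasing, while the promise apparatus guarantees that no stationary set of $V^{\PP_n}$ is destroyed. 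The essential design constraint is that a condition may pin down only finitely many of the $\delta_n$ and must leave cofinally many coordinates free to grow; this is what prevents any finite stage from already exhibiting the cofinal sequence.

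The main obstacle is verifying stationary-set-preservation of each $\Q_n$ over $V^{\PP_n}$ while keeping the coordinates free enough for the limit to collapse. I would establish preservation by a properness-style argument: for large regular $\theta$, a stationary $E\subseteq\omega_1$, a $\Q_n$-name $\dot C$ for a club, and any condition, build a countable $X\prec H_\theta$ with $X\cap\omega_1=\gamma\in E$ containing $\Q_n,\dot C$ and the relevant parameters, then construct an $(X,\Q_n)$-generic condition below the given one by meeting the dense sets of $X$ along an $\omega$-chain with supremum $\gamma$, forcing $\gamma\in\dot C$. The partition is used to ensure that the finitely many promises active at stage $n$ can always be honoured past $\gamma$ without excluding $\gamma$, so $E$ remains stationary. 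The delicate point is the coherence of these promises across the coordinates below $n$, and this is exactly where a general iteration theorem would be invoked to push the argument through the limit; the content of the present theorem is that such coherence provably breaks at stage $\omega$.

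Finally I would verify the collapse at the limit. By the design constraint above, for every $\eta<\omega_1$ and every $p\in\PP_\omega$ one can strengthen cofinally many coordinates of $p$ simultaneously --- something no finite iterate can do --- to force $\delta_m>\check\eta$ for some $m$; hence each $D_\eta=\{p\in\PP_\omega\mid p\forces\exists m\,\delta_m>\check\eta\}$ is dense. Meeting all of them shows that $\langle\delta_m\mid m<\omega\rangle$ is cofinal in $\omega_1^V$ in $V^{\PP_\omega}$, so $\omega_1$ is collapsed. This is the precise opposite of the situation in the Solovay--Tennenbaum and Shelah iteration theorems, where the chosen support lets the limit inherit the preservation property of the stages; here the full support inverse limit is exactly the mechanism that produces the new cofinal $\omega$-sequence that no stationary-set-preserving finite stage could.
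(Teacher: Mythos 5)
Your outer shell is fine: composition preserves stationary-set-preservation, so all finite stages preserve $\omega_1$, and a collapse at the limit is equivalent to adding a cofinal $\omega$-sequence into $\omega_1^V$; you also correctly set aside the paper's folklore example (iterating $\CS(\omega_1\setminus S_n)$ along a partition into stationary sets), whose iterands kill old stationary sets and hence are not stationary set preserving. Note, however, that the paper itself does not prove this theorem --- it is quoted from Shelah's book --- so the only thing to assess is your argument, and its mathematical core is missing. Everything rests on your ``essential design constraint'' that a condition of $\PP_\omega$ ``may pin down only finitely many of the $\delta_n$'' together with the resulting density of the sets $D_\eta$; but you never define forcings realizing this constraint, and in its naive reading it is unachievable in \emph{any} full support iteration of $\omega_1$-preserving forcings.

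Here is the concrete obstruction. If $\Q_n$ is forced to preserve $\omega_1$ and $\dot\delta_n$ is a $\Q_n$-name for a countable ordinal, then the set of $q\in\Q_n$ forcing $\dot\delta_n\leq\check\eta$ for some $\eta<\omega_1$ is dense in $\Q_n$ (pass to a generic extension, where $\dot\delta_n$ is evaluated as some $\eta<\omega_1^V$, and pick a condition in the generic filter forcing this). Applying this coordinatewise with the maximum principle, one finds below any $p\in\PP_\omega$ a full support condition $p'$ such that for every $n$, $p'\res n$ forces that $p'(n)$ bounds $\dot\delta_n$ by some $\PP_n$-name $\dot\eta_n$ for a countable ordinal. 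So densely many conditions ``pin down'' \emph{every} $\delta_n$ at once, contrary to your constraint, and below such $p'$ your density claim for $D_\eta$ cannot be proved by ``leaving cofinally many coordinates free'': if the bounds $\eta_n$ were actual ordinals, $p'$ would force $\sup_n\delta_n\leq\sup_n\eta_n<\omega_1^V$ whenever $\omega_1^V$ survives. The only way a collapse can still occur below $p'$ is that the $\dot\eta_n$ are \emph{names} whose realized values are driven unboundedly high by extending the coordinates below $n$; that is, the iterands must be engineered so that the ordinal attached to coordinate $n$ is controlled, cofinally in $\omega_1$, by the generic of the earlier stages, and this interaction must then be reconciled with stationary-set-preservation of each single $\Q_n$. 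That reconciliation is precisely the content of Shelah's construction, and your sketch defers it at the decisive moment: invoking ``a general iteration theorem'' and remarking that ``such coherence provably breaks at stage $\omega$'' cannot establish the existence of a counterexample --- an existence theorem requires exhibiting the iterands (your ``countable working parts and finitely many promises'' is not a definition), proving each is stationary set preserving over the previous extension, and deriving the collapse from their specific combinatorics. As written, the proposal restates the theorem as a design goal rather than proving it; incidentally, your side remark that a decreasing sequence of clubs yields no collapse because ``a countable intersection of clubs is again a club'' already presupposes that $\omega_1$ is preserved, which is the very point at issue.
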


In fact, in the above example it does not matter at all which kind of limit is taken, though we want to mention that countable support, $\RCS$ and full support iterations agree on length $\omega$ iterations.  The first forcing in Shelah's example is semiproper, but all subsequent forcings are not semiproper in the relevant extension. Semiproper forcing is the correct \textbf{regularity} property for stationary set preserving forcings in terms of iterations in the sense that 

\begin{enumerate}
    \item all semiproper forcings are stationary set preserving,
    \item consistently, all stationary set preserving forcings are semiproper and
    \item semiproper forcings can be iterated.
\end{enumerate}

We will define the class of \textbf{respectful} forcing which, in a slightly weaker sense, is a regularity property corresponding to the wider class of $\omega_1$-preserving forcings. 

\subsection{Iterations of $\omega_1$-Preserving Forcings}

When iterating $\omega_1$-preserving forcing which kill stationary sets there is another threat to preserving $\omega_1$ in the limit as illustrated in the following folklore example: For $S\subseteq\omega_1$ stationary, the club shooting forcing $\CS(S)$ is the canonical forcing that shoots a club trough $S$. Conditions are closed countable sets $c\subseteq S$ ordered by $d\leq_{\CS(S)}c$ iff $d\cap (\max(c)+1)=c$. If $G$ is generic for $\CS(S)$ then $\bigcup G$ is a club contained in $S$, so $\omega_1-S$ is nonstationary in $V[G]$, but $\omega_1$ is not collapsed, that is $\omega_1^{V[G]}=\omega_1^V$.
Now suppose $\langle S_n\mid n<\omega\rangle$ be a partition of $\omega_1$ into stationary sets. Let $\PP$ be a length $\omega$ iteration of the forcings $\CS(\omega_1-S_n)$ (it does not matter which limit we take at $\omega$). Then $\PP$ must collapse $\omega_1$ as in $V^\PP$, $\omega_1^V=\bigcup_{n<\omega} S_n$ is a countable union of nonstationary sets and hence must be nonstationary itself. Clearly, this is only possible if $\omega_1^V<\omega_1^{V^\PP}$.

The issue here does not stem from a lack of regularity of the forcings we used. In fact, for a stationary set $S\subseteq\omega_1$, the club shooting $\CS(S)$ is $S$-proper. The problem is much more that at each step of the iteration, we come back to $V$ to kill an ``old" stationary set. If we avoid the two presented issues of 
\begin{enumerate}
    \item using too many forcings lacking regularity properties and
    \item killing old stationary sets
\end{enumerate}
then we can prove an iteration theorem for $\omega_1$-preserving forcings. Without defining respectful forcings, a special case of our main result can be stated as follows.

\begin{thm}\label{omega1presiterationthm}
Suppose $\standarditeration$ is a nice iteration of $\omega_1$-preserving forcings so that 
\begin{enumerate}[label=$(\roman*)$]
    \item if $\alpha+2<\gamma$ then $\forces_{\PP_{\alpha+2}}``\text{Strong Reflection Principle}"$ and
    \item if $\alpha<\gamma$ then $\Q_\alpha$ is forced to preserve old stationary sets, i.e.
    $$\forall\beta<\alpha\ \forces_{\PP_{\alpha+1}}\NS\cap V[\dot G_{\beta}]=\NS^{V[\dot G_\alpha]}\cap V[\dot G_{\beta}].$$
\end{enumerate}
Then $\PP_\gamma$ preserves $\omega_1$. Moreover, we have for all $\alpha+1\leq\gamma$
$$\forall\beta\leq\alpha\ \forces_{\PP_\gamma}\NS\cap V[\dot G_\beta]=\NS^{V[\dot G_{\beta+1}]}\cap V[\dot G_\beta].$$
\end{thm}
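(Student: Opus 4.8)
The plan is to prove the two conclusions simultaneously by induction on $\gamma$, with the ``moreover'' clause about non-stationary-ideal preservation doing the heavy lifting: it is exactly the inductive hypothesis strong enough to close the limit case. The main point is that $\omega_1$ is collapsed precisely when $\omega_1^V$ becomes the union of countably many sets that are nonstationary in the final model, so the key is to forbid old stationary sets from being destroyed, i.e.\ to maintain
\begin{equation*}
\forall\beta\le\alpha\quad\forces_{\PP_\gamma}\ \NS\cap V[\dot G_\beta]=\NS^{V[\dot G_{\beta+1}]}\cap V[\dot G_\beta]
\end{equation*}
across all $\alpha+1\le\gamma$. Given this, $\omega_1$-preservation is almost immediate: if $\PP_\gamma$ collapsed $\omega_1$, then in $V[G_\gamma]$ there would be a countable sequence $\langle C_n\mid n<\omega\rangle$ of clubs witnessing that $\omega_1^V$ is nonstationary, hence a single club $C$ avoiding a fixed $V$-stationary $S$; but by niceness each such club is added at some stage, contradicting that $S$ cannot be made nonstationary over the model where it lives.

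\textbf{Successor step.} First I would handle $\gamma=\alpha+2$ (or $\alpha+1$). Here hypothesis (ii) gives directly that $\Q_\alpha$ preserves all old stationary sets, i.e.\ $\NS\cap V[G_\beta]=\NS^{V[G_{\alpha}]}\cap V[G_\beta]$ for every $\beta\le\alpha$. The content is to promote ``preserves stationarity of each old set'' to the full ideal equation, and to compose this single-step preservation with the inductive equations at earlier stages by transitivity of $\subseteq$ between the intermediate models. The one subtlety is that one direction ($\NS^{V[G_{\beta+1}]}\cap V[G_\beta]\subseteq\NS\cap V[G_\beta]$) is trivial by upward absoluteness of nonstationarity, while the reverse direction is exactly what (ii) buys us.

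\textbf{Limit step.} This is where I expect the main obstacle, and where $\SRP$ from hypothesis (i) is essential. At a limit $\gamma$ of cofinality $\omega_1$ the equations pass up routinely, since every real and every subset of $\omega_1$ in $V[G_\gamma]$ appears at a bounded stage by niceness and $\omega_1$-preservation below $\gamma$. The genuinely hard case is $\cof(\gamma)=\omega$: here a club added cofinally through the iteration can avoid an old stationary set even though no single intermediate stage destroys it — this is precisely the club-shooting pathology described in the introduction. The role of $\SRP$, forced cofinally often by (i), is to reflect the relevant stationary-set-preservation configuration down to a stage of size $\aleph_1$, so that destroying an old stationary set at the limit would already be witnessed (by reflection) at some intermediate stage, contradicting the inductive equation there. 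Concretely I would take a putative club $C\in V[G_\gamma]$ disjoint from an old stationary $S\in V[G_\beta]$, use niceness to capture a name and an elementary submodel absorbing the tail, and apply $\SRP$ to reflect the failure of preservation to a proper initial segment — the delicate bookkeeping is ensuring the reflecting stage still sees $S$ as stationary, which follows from the inductive hypothesis applied below $\gamma$.

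**The hard part will be** making the reflection argument at cofinality $\omega$ fully rigorous: one must verify that the ``niceness'' of the iteration guarantees enough capturing of tail-names and clubs at bounded stages, and that $\SRP$ (as forced by (i)) applies to the specific stationary set of countable elementary submodels coding the preservation failure. I would isolate this as the central lemma, proving first that it suffices to reflect a single violating instance, and only then assemble the global induction.
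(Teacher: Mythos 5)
There is a genuine gap, and it sits exactly where you predicted the difficulty would be. Your proposal rests twice on the principle that objects of $V[G_\gamma]$ at a limit $\gamma$ are captured at bounded stages: once in the claim that $\omega_1$-preservation is ``almost immediate'' because ``by niceness each such club is added at some stage,'' and once in the claim that the cofinality-$\omega_1$ case is routine because every subset of $\omega_1$ in $V[G_\gamma]$ appears boundedly. Neither is true: niceness provides no such capturing, and a club witnessing a collapse of $\omega_1$ (or killing an old stationary set) at a limit is precisely a generic object existing in no $V[G_\alpha]$, $\alpha<\gamma$ --- in the folklore club-shooting example of the introduction every bounded stage preserves $\omega_1$ and the collapse happens only at the limit. (The first use is repairable without capturing: if the ideal equations hold at $\gamma$, an old stationary set stays stationary, hence unbounded, in $V[G_\gamma]$, so $\omega_1^V=\omega_1^{V[G_\gamma]}$; the second use is not.) More seriously, your central mechanism for the cofinality-$\omega$ case --- reflecting a putative preservation failure at $\gamma$ down to a bounded stage via $\SRP$ --- cannot work as stated, for the same reason: $\SRP$ holds only in the models $V[G_{\alpha+2}]$, and the failure you want to reflect is witnessed by a $\PP_\gamma$-generic object that no bounded-stage model sees, nor is it coded by a stationary subset of some $[H_\theta]^\omega$ of a bounded-stage model to which $\SRP$ there could be applied. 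A proof by contradiction of this shape never has to produce the thing that actually forces $\omega_1$ to survive.

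What the paper does instead is a forward construction (the statement is derived as the special case of Theorem \ref{iterationtheorem} for the trivial forcing $\BB=\{\1\}$ and its trivial filter-guessing witness). $\SRP$ is used once and for all (Lemma \ref{sledgehammerlemm}) to show that in each intermediate model every $\omega_1$-preserving forcing is \emph{respectful}: for countable $X\prec H_\theta$, $p\in\PP\cap X$ and a name $\dot I$ for a normal uniform ideal, either some $(X,\PP)$-semigeneric $q\leq p$ forces $\check X[\dot G]$ to respect $\dot I$, or $X$ already fails to respect $\dot I^p$. This dichotomy is then threaded through the nice iteration by a fusion structure in Miyamoto's sense (Facts \ref{m2.11fact} and \ref{m3.5fact}): one builds a nested antichain together with names for a $\sqsubseteq$-increasing chain of countable submodels, each respecting the ideal of sets forced nonstationary by the relevant coordinate of the condition being built; hypothesis (ii) enters exactly in the verification that this ideal coincides with the one the previous model already respects (the sets newly killed at the next step are new, so no obstruction of type \ref{namedichotomyopt2} can arise). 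The mixture of the resulting nested antichain is an $(X,\PP_\gamma)$-semigeneric condition, and the existence of such conditions below any $p$ for projectively positive many $X$ (Proposition \ref{trueprojectiveprop}, Lemma \ref{fpreservinglemm}) is what actually yields preservation of $\omega_1$ and of old stationary sets, uniformly in the cofinality of $\gamma$. Your proposal contains no counterpart to semigeneric (master) conditions; that, not the bookkeeping, is the missing idea, and without it the induction cannot close at limits.
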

In fact we will prove something more general which allows, e.g. the preservation of a Suslin tree on the side.

Here, the Strong Reflection Principle is the reflection principle isolated by \todorcevic.

\begin{defn}[Todor\v{c}evi\'{c}, \cite{todorcevicsrp}]
\hspace{1pt}
\begin{enumerate}[label=\rn*]
\item For $\theta$ an uncountable cardinal and $\mathcal S\subseteq [H_\theta]^\omega$ we define \index{Sperp@$\mathcal S^\perp$}
$$\mathcal S^\perp=\{X\in[H_\theta]^\omega\mid\forall Y\in [H_\theta]^\omega(X\sqsubseteq Y\rightarrow Y\notin S)\}.$$
\item The Strong Reflection Principle ($\SRP$)\index{Strong Reflection Principle} holds if: Whenever $\theta\geq\omega_2$ is regular, $a\in H_\theta$ and $S\subseteq [H_\theta]^\omega$ then $\mathcal S\cup \mathcal S^\perp$ contains a continuous increasing $\omega_1$-chain of countable elementary substructures of $H_\theta$ containing $a$, i.e.~there is $\langle X_\alpha\mid\alpha<\omega_1\rangle$ so that for all $\alpha<\omega_1$
\begin{enumerate}[label=$(\vec X.\roman*)$]
\item\label{SRPcond1} $X_\alpha\prec H_\theta$ is countable,
\item $X_\alpha\in \mathcal S\cup \mathcal S^\perp$,
\item $a\in X_0$,
\item $X_\alpha\in X_{\alpha+1}$ and
\item\label{SRPcond5} if $\alpha\in\Lim$ then $X_\alpha=\bigcup_{\beta<\alpha} X_\beta$.
\end{enumerate}
\end{enumerate}

\end{defn}

We note that $\SRP$ can always be forced assuming large cardinals.

\begin{thm}[Shelah]
Suppose there is a supercompact cardinal. Then there is a semiproper forcing $\PP$ so that $V^\PP\models\SRP$.
\end{thm}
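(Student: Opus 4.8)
The plan is to run Shelah's $\RCS$ iteration guided by a Laver function and then verify $\SRP$ in the extension by a supercompactness reflection. Fix a supercompact cardinal $\kappa$ together with a Laver function $\ell\colon\kappa\to V_\kappa$, so that for every set $x$ and every $\lambda$ there is a supercompactness embedding $j\colon V\to N$ with $\crit(j)=\kappa$, ${}^{\lambda}N\subseteq N$ and $j(\ell)(\kappa)=x$. Define an $\RCS$ iteration $\langle\PP_\alpha,\Q_\beta\mid\alpha\leq\kappa,\beta<\kappa\rangle$ of length $\kappa$ in which, at stage $\alpha$, one forces with $\ell(\alpha)$ whenever $\ell(\alpha)$ is a $\PP_\alpha$-name for a semiproper forcing, and with trivial forcing otherwise. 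By Theorem~\ref{semiproperiterationthm} the whole iteration $\PP_\kappa$ is then semiproper, and in particular preserves $\omega_1$; a standard $\Delta$-system argument gives that $\PP_\kappa$ is $\kappa$-c.c.\ and that $\kappa=\omega_2^{V^{\PP_\kappa}}$. The poset $\PP:=\PP_\kappa$ will be the witness.

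The local heart of the construction is the claim that, for a regular $\theta\geq\omega_2$, a parameter $a\in H_\theta$, and $S\subseteq[H_\theta]^\omega$, the \emph{$\SRP$-chain forcing} $\PP(S,a)$ is semiproper. Its conditions are the countable continuous $\in$-increasing chains $\langle X_\xi\mid\xi\leq\eta\rangle$ of countable $X_\xi\prec H_\theta$ with $a\in X_0$ and $X_\xi\in S\cup S^{\perp}$ for all $\xi$, ordered by end-extension; its generic clearly produces a witness $\langle X_\alpha\mid\alpha<\omega_1\rangle$ as in \ref{SRPcond1}--\ref{SRPcond5}. To see semiproperness, let $M\prec H_\lambda$ be countable with $\{S,a,\theta,\PP(S,a)\}\subseteq M$ and $p\in\PP(S,a)\cap M$; set $\delta=M\cap\omega_1$ and $X=M\cap H_\theta\prec H_\theta$, so $X\cap\omega_1=\delta$ and $a\in X$. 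Since $p$ is a countable object of $M$, every model occurring in $p$ lies in and is a subset of $X$, so the top model of $p$ is an element of $X$. Now apply the dichotomy defining $S^{\perp}$: if $X\in S^{\perp}$, then $q=p\cc\langle X\rangle$ is a legitimate extension of $p$; if instead some $Y\sqsupseteq X$ lies in $S$, one appends a suitable such $Y$ instead. In either case the top model $Z$ of $q$ satisfies $Z\cap\omega_1=\delta$, and one checks that $q$ forces every $\PP(S,a)$-name in $M$ for a countable ordinal to be realized below $\delta$, so that $q\Vdash M\sqsubseteq M[\dot G]$. I expect the main obstacle to lie exactly here: in the second case one must produce the appended top model as an \emph{elementary} submodel of $H_\theta$ that is in $S$ while retaining $\omega_1$-intersection $\delta$, and then carry out the semigenericity verification; this reconciliation of the arbitrary witness $Y$ with the elementarity requirement \ref{SRPcond1} is the delicate point, and is where I would spend most of the effort (cf.\ \todorcevic's original analysis).

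With the local lemma in hand, the iteration closes off all instances by reflection. Given, in $V^{\PP_\kappa}$, a regular $\theta\geq\omega_2$, $a\in H_\theta$ and $S\subseteq[H_\theta]^\omega$, choose $j\colon V\to N$ as above with $j(\ell)(\kappa)$ equal to the $\PP_\kappa$-name for the forcing $\PP(S,a)$, which is semiproper by the local lemma. Then in the iteration $j(\PP_\kappa)$, computed in $N$, the stage-$\kappa$ forcing is $\PP(S,a)$, so the chain witnessing this instance of $\SRP$ is added; since the tail $j(\PP_\kappa)/\PP_{\kappa+1}$ is semiproper and $\PP_\kappa$ is $\kappa$-c.c., a standard master-condition and survival argument (the relevant instance is already determined by some $\PP_\alpha$ with $\alpha<\kappa$, and the structure of the chain is preserved by the semiproper tail) reflects the existence of such a witness down to $V^{\PP_\kappa}$ itself. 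As the instance was arbitrary, $V^{\PP}\models\SRP$. The second genuine difficulty, besides the local semiproperness lemma, is precisely this survival/reflection step: one must ensure that a witnessing chain built for $S$ as computed at an intermediate stage still witnesses the instance for $S$ and the larger $H_\theta$ of the final model, which is arranged via the $\kappa$-c.c.\ stabilization of $H_\theta$ below $\kappa$ together with the supercompact reflection for $\theta\geq\kappa$. Alternatively, one may run the same iteration to force full $\MM$ and invoke $\MM\Rightarrow\SRP$ (\todorcevic), but the direct route above keeps every factor semiproper and so dovetails cleanly with Theorem~\ref{semiproperiterationthm}.
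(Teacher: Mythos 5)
Your construction of the iteration (Laver-guided $\RCS$ iteration of the semiproper values of $\ell$) is the standard and correct skeleton, but the proposal stands or falls with your ``local lemma'' that the chain forcing $\PP(S,a)$ through $S\cup S^{\perp}$ is semiproper, and that lemma is not a theorem of $\ZFC$ --- the Case 2 you yourself flag as the delicate point is not a technical wrinkle but an unfixable hole. Here is the structural reason. Suppose $q\leq p$ were $(M,\PP(S,a))$-semigeneric and $G\ni q$ generic. Since the canonical name for the generic filter lies in $M$, the generic chain lies in $M[G]$, and one checks that its model at index $\delta=M\cap\omega_1$ must be exactly $M[G]\cap H_\theta^{V}$: a set which end-extends $X=M\cap H_\theta$ with the same trace $\delta$ (by semigenericity), is closed under every function on $H_\theta$ belonging to $M$, and is therefore an \emph{elementary} submodel of $H_\theta$. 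So a semigeneric extension of $p$ can exist only if some elementary $X^{*}\sqsupseteq X$, realizable in this way, lies in $S\cup S^{\perp}$. The witness $Y$ to $X\notin S^{\perp}$ is just a countable set: it need not be elementary (so appending it is illegal under the formulation used here), it need not be closed under the functions of $M$, and it cannot in general arise as $M[G]\cap H_\theta^{V}$. When $X\notin S\cup S^{\perp}$ and no admissible $X^{*}$ exists --- which happens for stationarily many $M$ for suitable $S$ --- there is no semigeneric condition below $p$ at all. Note also that your reflection step quietly re-uses the lemma inside $V^{\PP_\kappa}$ and $N$, since $j(\PP_\kappa)$ only forces with $j(\ell)(\kappa)$ if semiproperness is verified at stage $\kappa$; so the gap infects both halves of the argument.

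In fact the lemma is false (modulo the consistency of an inaccessible), and the paper's own introduction explains why. For a maximal antichain $\mathcal A$ in $(\mathcal P(\omega_1)/\NS)^{+}$, the sealing forcing $\mathcal S_{\mathcal A}$ is precisely a chain forcing of your type, applied to $S_{\mathcal A}=\{W\in[H_{\omega_2}]^{\omega}\mid\exists A\in\mathcal A\cap W\ (W\cap\omega_1\in A)\}$, whose perp is nonstationary by maximality. Were your lemma provable, then over $L$ with an inaccessible $\kappa$ your iteration scheme restricted to sealing forcings would consist entirely of semiproper iterands, hence preserve $\omega_1$ by Theorem \ref{semiproperiterationthm}, and the usual bookkeeping would force ``$\NS$ is saturated'' --- contradicting Theorem \ref{innermodelwithwoodinfromsatidealthm}, since forcing over $L$ cannot create an inner model with a Woodin cardinal. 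This is exactly the phenomenon discussed around Theorem \ref{nssatthm}: these forcings are stationary set preserving (when the relevant set is projective stationary in the sense of Feng--Jech) but not semiproper in general, and even a Woodin cardinal only buys semiproperness ``often enough''. The correct proof is the route you mention only as an afterthought: force $\SPFA$ with your iteration (no local lemma is needed, as one only ever forces with names verified to be semiproper), then invoke Shelah's theorem that $\SPFA$ implies every stationary set preserving forcing is semiproper, hence $\MM$ holds, and finally \todorcevic's theorem that $\MM$ implies $\SRP$, where $\MM$ is applied to chain forcings whose stationary set preservation --- not semiproperness --- is the $\ZFC$ fact. Semiproperness of $\PP(S,a)$ is available only as a consequence of the forcing axiom in the final model, never as a lemma while the iteration is being built.
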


As a consequence of this, assuming large cardinals, Theorem \ref{omega1presiterationthm} can be understood as a \textit{strategic} iteration theorem. Consider the following two player game $\mathrm{IG}_\gamma$ of length $\gamma$. 

\begin{center}
    \begin{tabular}{c||c|c|c|c|c|c|c|c}
        Player $I$ & $\Q_0$ &  & $\Q_2$ & &\dots&$\Q_\omega$ & &\dots \\
        \hline
        Player $II$& & $\Q_1$ & &$\Q_3$ &\dots & &$\Q_{\omega+1}$&\dots \\
         \end{tabular}
\end{center}

Player $I$ plays at all even stages, including limit steps. Player $I$ and $II$ cooperate in this way to produce an $\RCS$-iteration $\standarditeration$ of forcings which do not kill old stationary sets. Player $II$ wins iff $\PP_\gamma$ preserves $\omega_1$.

\begin{cor}
    Suppose there is a proper class of supercompact cardinals. Then for any $\gamma$, Player $II$ has a winning strategy for the game $\mathrm{IG}_\gamma$.
\end{cor}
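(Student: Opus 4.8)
The plan is to have Player $II$ force $\SRP$ at each of its turns, thereby reducing the game to the iteration theorem. Concretely, Player $II$ adopts the following strategy: at an odd stage $\beta$, working inside $V[\dot G_\beta]$, it picks a supercompact cardinal $\kappa$ with $|\PP_\beta| < \kappa$. Such a $\kappa$ exists because there is a proper class of supercompacts and $\PP_\beta$ is a set forcing; moreover $\kappa$ is still supercompact in $V[\dot G_\beta]$ since that forcing is small relative to $\kappa$. Player $II$ then applies Shelah's theorem in $V[\dot G_\beta]$ to obtain a semiproper $\Q_\beta$ with $V[\dot G_\beta]^{\Q_\beta}\models\SRP$, and plays it. This is a legal move in $\mathrm{IG}_\gamma$: a semiproper forcing is stationary set preserving, so in particular it does not kill old stationary sets. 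Because $\gamma$ is fixed and the supercompacts are unbounded, Player $II$ can keep choosing such $\kappa$ throughout the run, always above the size of the iteration built so far.

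I would then verify that the iteration $\standarditeration$ produced under this strategy satisfies the hypotheses of the general, respectful form of Theorem \ref{omega1presiterationthm}. Condition $(ii)$, that every $\Q_\alpha$ preserves old stationary sets, holds by fiat for Player $I$'s moves and by semiproperness for Player $II$'s. For the regularity hypothesis, the key observation is that $\SRP$ holds in $V[\dot G_{\beta+1}]$ whenever $\beta$ is odd, i.e.\ at every even successor stage, since that model is exactly the $\Q_\beta$-extension of $V[\dot G_\beta]$. Player $II$'s own moves are respectful because they are semiproper. Player $I$'s moves at even successor stages $\beta+1$ live over a model satisfying $\SRP$, forced by the immediately preceding move $\Q_\beta$, and under $\SRP$ the forcings Player $I$ is permitted to play become respectful; this is the analogue, for $\omega_1$-preserving forcing, of the fact that $\SRP$ implies every stationary set preserving forcing is semiproper. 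Granting this, every step is respectful, all old stationary sets are preserved, and the iteration theorem yields that $\PP_\gamma$ preserves $\omega_1$, so Player $II$ wins.

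The hard part is precisely the stages controlled by Player $I$ that are \emph{not} immediately preceded by an $\SRP$-forcing, namely the limit stages $\lambda$, where Player $I$ plays $\Q_\lambda$ over the $\RCS$-limit model $V[\dot G_\lambda]$. There $\SRP$ is known only cofinally below $\lambda$, and it is not automatic that enough reflection survives to $V[\dot G_\lambda]$ to make $\Q_\lambda$ respectful. I expect this to be handled not by re-deriving $\SRP$ at limits but by the internal machinery of the general iteration theorem: respectfulness together with the preservation of old stationary sets --- the content of the ``moreover'' clause of Theorem \ref{omega1presiterationthm} asserting $\NS\cap V[\dot G_\beta]=\NS^{V[\dot G_{\beta+1}]}\cap V[\dot G_\beta]$ --- is what propagates through $\RCS$-limits, so that limit steps are absorbed by the theorem rather than requiring a fresh reflection argument. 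Pinning down exactly which reflection consequence of $\SRP$ is both preserved cofinally and sufficient for respectfulness at $\lambda$ is the crux, and it is the point at which the full generality of the iteration theorem, rather than the bare special case, is genuinely needed.
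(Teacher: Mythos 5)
Your first paragraph is exactly the intended proof of the corollary: Player $II$ uses Levy--Solovay to find a supercompact $\kappa$ above $|\PP_\beta|$ in $V[\dot G_\beta]$, plays Shelah's semiproper forcing of $\SRP$, and this is a legal move because semiproper forcings preserve all stationary sets, hence in particular the old ones. Your bookkeeping observation that this yields $\SRP$ in $V[\dot G_{\beta+1}]$ at every even successor stage is also correct.

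The rest of the proposal, however, aims at the wrong target, and the ``crux'' you end on is a phantom difficulty. Neither Theorem \ref{omega1presiterationthm} nor its general form, Theorem \ref{iterationtheorem}, has ``each iterand is respectful'' among its hypotheses: the hypotheses are exactly (i) $\SRP$ is forced at (double) successor stages and (ii) each $\Q_\alpha$ preserves stationary sets from earlier models. Here (ii) holds for every move of either player by the rules of the game, and (i) is what Player $II$'s strategy arranges; nothing at all has to be verified at limit stages, so there is no need for $\Q_\lambda$ to be respectful and no need for any reflection to ``survive'' to $V[\dot G_\lambda]$. Respectfulness is not an input to the theorem but a device inside its proof, where it is needed for the \emph{tails} $\PP_{\alpha,\beta}$ rather than for the iterands --- and the paper stresses that per-iterand respectfulness would not suffice, since no iteration theorem for respectful forcings is available; this is precisely why the hypothesis is $\SRP$ in intermediate models (giving $(\ddagger)$ via Lemma \ref{sledgehammerlemm}, hence respectfulness of \emph{all} $\omega_1$-preserving forcings there, tails included), and why the fusion structure in the proof is built so that all lengths avoid limit ordinals. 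The one genuine wrinkle, which you brush against without isolating it, is parity: your strategy gives $\SRP$ only at even successor stages, while hypothesis (i) literally demands it at all stages of the form $\alpha+2$, and Player $I$ can destroy $\SRP$ at the odd ones (e.g.\ by playing $\Add(\omega_1,1)$, a $\sigma$-closed and hence legal move which forces $\CH$). So the corollary follows not from the literal statement of the iteration theorem but from its proof, in which the lengths occurring in the fusion structure may be taken inside any cofinal set of non-limit ordinals --- say the even successors --- and the base case is handled by working in $V^{\PP_2}$, where $\SRP$ holds.
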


\subsection*{Acknowledgements}
The author thanks his PhD advisor Ralf Schindler for many fruitful discussions as well as his guidance and support during the completion of this project.\\
Funded by the Deutsche Forschungsgemeinschaft (DFG, German Research
Foundation) under Germany’s Excellence Strategy EXC 2044 390685587, Mathematics M\"unster:
Dynamics - Geometry - Structure.
\section{Notation}

First, we fix some notation. We will extensively deal with countable elementary substructures $X\prec H_\theta$ for large regular $\theta$. We will make frequent use of the following notation:

\begin{defn}
Suppose $X$ is any extensional set.
\begin{enumerate}[label=\rn*]
\item $M_X$ denotes the transitive isomorph of $X$\index{MX@$M_X$}.
\item  $\pi_X\colon M_X\rightarrow X$ denotes the inverse collapse\index{piX@$\pi_X$}.
\item\label{deltadefn} $\delta^X\coloneqq\omega_1\cap X$\index{deltaX@$\delta^X$}.
\end{enumerate}
\end{defn} 

In almost all cases, we will apply this definition to a countable elementary substructure $X\prec H_\theta$ for some uncountable cardinal $\theta$. In some cases, the $X$ we care about lives in a generic extension of $V$, even though it is a substructure of $H_\theta^V$. In that case, $\delta^X$ will always mean $X\cap \omega_1^V$.\medskip

We will also sometimes make use of the following convention in order to ``unclutter" arguments.

\begin{conv}
If $X\prec H_\theta$ is an elementary substructure and some object $a$ has been defined before and $a\in X$ then we denote $\pi_X^{-1}(a)$ by $\bar a$\index{abar@$\bar a$ (short for $\pi_X^{-1}(a)$)}. 
\end{conv}

We will make use of this notation only if it is unambiguous.

\begin{defn}
If $X, Y$ are sets then $X\sqsubseteq Y$\index{<squaresubset@$\sqsubseteq$} holds just in case 
\begin{enumerate}[label=\rn*]
\item $X\subseteq Y$ and
\item $\delta^X=\delta^Y$.
\end{enumerate}
\end{defn}

We use the following notions of clubs and stationarity on $[H_\theta]^\omega$:

\begin{defn}
Suppose $A$ is an uncountable set. 
\begin{enumerate}[label=\rn*]
\item $[A]^\omega$\index{[A]omega@$[A]^\omega$} is the set of countable subsets of $A$.
\item $\mathcal C\subseteq [A]^\omega$ is a club in $[A]^\omega$\index{[A]omega@$[A]^\omega$!club in} if
\begin{enumerate}[label=$\alph*)$]
\item for any $X\in[A]^\omega$ there is a $Y\in \mathcal C$ with $X\subseteq Y$ and
\item if $\langle Y_n\mid n<\omega\rangle$ is a $\subseteq$-increasing sequence of sets in $\mathcal C$ then $\bigcup_{n<\omega} Y_n\in \mathcal C$.
\end{enumerate}
\item $\mathcal S\subseteq[A]^\omega$ is stationary in $[A]^\omega$\index{[A]omega@$[A]^\omega$!stationary in} if $\mathcal S\cap\mathcal C\neq\emptyset$ for any club $\mathcal C$ in $[A]^\omega$.
\end{enumerate}
\end{defn}

Next, we explain our notation for forcing iterations.
\begin{defn} Suppose $\PP=\standarditeration$ is an iteration and $\beta\leq\gamma$. We consider elements of $\PP$ as functions of domain (or length) $\gamma$.
\begin{enumerate}[label=$(\roman*)$]
\item If $p\in\PP_\beta$ then $\lh(p)=\beta$.
\item If $G$ is $\PP$-generic then $G_\beta$ denotes the restriction of $G$ to $\PP_\beta$, i.e.
$$G_\beta=\{p\res\beta\mid p\in G\}.$$
Moreover, $\dot G_\beta$ is the canonical $\PP$-name for $G_\beta$.
\item If $G_\beta$ is $\PP_\beta$-generic then $\PP_{\beta,\gamma}$ denotes (by slight abuse of notation) the remainder of the iteration, that is 
$$\PP_{\beta,\gamma}=\{p\in\PP_\gamma\mid p\res\beta\in G_\beta\}.$$
$\dot\PP_{\beta,\gamma}$ denotes a name for $\PP_{\beta,\gamma}$ in $V$.
\item If $G$ is $\PP$-generic and $\alpha<\beta$ then $G_{\alpha,\beta}$ denotes the projection of $G$ onto $\PP_{\alpha,\beta}$.
\end{enumerate}
\end{defn}

There will be a number of instances were we need a structure to satsify a sufficiently large fragment of $\ZFC$. For completeness, we make this precise.

\begin{defn}
\textit{Sufficiently much of }$\ZFC$\index{sufficiently much of ZFC@Sufficiently much of $\ZFC$} is the fragment $\ZFC^{-}+``\omega_1\text{ exists}"$. Here, $\ZFC^{-}$ is $\ZFC$ without the powerset axiom and with the collection scheme instead of the replacement scheme.
\end{defn}

\section{$\diab$ and $\diaba$}\label{diabanddiabasection}
We will introduce the combinatorial principle which will parameterize the main iteration theorem. These are generalizations of the principles $\diacol$ and $\diacolp$ isolated by Woodin \cite{woodinbook} in his study of $\Qmax$ \cite[Section 6.2]{woodinbook}. Most results in this Section are essentially due to Woodin and proven in \cite[Section 6.2]{woodinbook}.
% However, the notion of $f$-stationarity we define is both new and central to the theory we will develop.

\begin{defn}
Suppose $\BP\subseteq\omega_1$ is a forcing.
\begin{enumerate}[label=\rn*]
\item We say that $f$ \textit{guesses $\BP$-filters}\index{f guesses B filters@$f$ guesses $\BP$-filters} if $f$ is a function 
$$f\colon\omega_1\rightarrow H_{\omega_1}$$
and for all $\alpha<\omega_1$, $f(\alpha)$ is a $\BP\cap\alpha$-filter\footnote{We consider the empty set to be a filter.}.
\item Suppose $\theta\geq\omega_2$ is regular and $X\prec H_\theta$ is an elementary substructure. We say $X$ is $\fgoodit$\index{Set!f slim@$\fgood$}\footnote{We use the adjective ``slim" for the following reason: An $\fgood$ $X\prec H_\theta$ cannot be too fat compared to its height below $\omega_1$, i.e.~$\delta^X$.  If $X\sqsubseteq Y\prec H_\theta$ and $Y$ is $\fgood$ then $X$ is $\fgood$ as well, but the converse can fail.} if
\begin{enumerate}[label=$(X.\roman*)$]
 \item $X$ is countable,
 \item $f, \BP\in X$ and
 \item $f(\delta^X)$ is $\BB\cap\delta^X$-generic over $M_X$.
 \end{enumerate} 
\end{enumerate}
\end{defn}

\begin{defn}\label{diadefn}
Let $\BP\subseteq\omega_1$ be a forcing. $\diab$\index{diamondb@$\diab$} states that there is a function $f$ so that
\begin{enumerate}[label=\rn*]
 \item $f$ guesses $\BP$-filters and
 \item for any $b\in\BP$ and regular $\theta\geq\omega_2$
 \begin{align*}
 \{X\prec H_\theta\mid X \text{ is }\fgood\wedge &b\in f(\delta^X)\}
 \end{align*}
 is stationary in $[H_{\theta}]^\omega$.
 \end{enumerate} 
$\diaba$\index{diamondb0plus@$\diaba$} is the strengthening of $\diab$ where $(ii)$ is replaced by:
\begin{enumerate}
\item[$(ii)^\plus$] For any regular $\theta\geq\omega_2$
$$\{X\prec H_\theta\mid X \text{ is }\fgood\}$$
contains a club of $[H_\theta]^{\omega}$. Moreover, for any $b\in\BP$ 
$$\{\alpha<\omega_1\mid b\in f(\alpha)\}$$
is stationary.
\end{enumerate}
We say that $f$ \textit{witnesses} $\diab$, $\diaba$ respectively. 
\end{defn}

\begin{rem}
Observe that if $f$ witnesses $\diab$ and $\BP$ is separative then $\BP$ can be ``read off" from $f$: We have $\BP=\bigcup_{\alpha<\omega_1} f(\alpha)$ and for $b, c\in\BP$, $b\leq_{\BP} c$ iff whenever $b\in f(\alpha)$ then $c\in f(\alpha)$ as well. Thus, it is usually not necessary to mention $\BP$.
\end{rem}

We introduce some convenient shorthand notation.
\begin{defn}
If $\BP\subseteq\omega_1$ is a forcing, $f$ guesses $\BP$-filters and $b\in\BP$ then
$$S^f_b\coloneqq \{\alpha<\omega_1\mid b\in f(\alpha)\}\index{Sfb@$S^f_b$}.$$
If $f$ is clear from context we will sometimes omit the superscript $f$.
\end{defn}

Note that if $f$ witnesses $\diab$, then $S^f_b$ is stationary for all $b\in\BP$. This is made explicit for $\diaba$. This is exactly the technical strengthening over Woodin's definition of $\diacol, \diacolp$. Lemma \ref{diacomplemm} shows that this strengthening is natural. Moreover, this implies $$\diamondsuit(\BB\oplus\mathbb C)\Rightarrow\diab\wedge\dia{\mathbb C}$$
whenever $\BB, \mathbb C\subseteq\omega_1$ are forcings and $\BB\oplus\mathbb C$ is the disjoint union of $\BB$ and $\mathbb C$ coded into a subset of $\omega_1$. This becomes relevant in Subsection \ref{splitwitnessessubsection}. Nonetheless, the basic theory of these principles is not changed by a lot.

\begin{defn}
If $f$ witnesses $\diab$ and $\PP$ is a forcing, we say that $\PP$ \textit{preserves} $f$\index{Forcing!preserves f@preserves $f$} if whenever $G$ is $\PP$-generic then $f$ witnesses $\diab$ in $V[G]$.
\end{defn}
We remark that if $f$ witnesses $\diaba$ then ``$\PP$ preserves $f$" still only means that $f$ witnesses $\diab$ in $V^\PP$.\medskip

Next, we define a variant of stationary sets related to a witness of $\diab$. Suppose $\theta\geq\omega_2$ is regular. Then $S\subseteq\omega_1$ is stationary iff for any club $\mathcal C\subseteq[H_\theta]^\omega$, there is some $X\in\mathcal C$ with $\delta^X\in S$. $f$-stationarity results from restricting to $f$-slim $X\prec H_\theta$ only.

\begin{defn}
Suppose $f$ guesses $\BP$-filters.
\begin{enumerate}[label=\rn*]
\item A subset $S\subseteq\omega_1$ is $f$\textit{-stationary}\index{f stationary@$f$-stationary!in omega one@in $\omega_1$} iff whenever $\theta\geq\omega_2$ is regular and $\mathcal C\subseteq[H_\theta]^\omega$ is club then there is some $\fgood$ $X\in\mathcal C$ with $\delta^X\in S$.
\item A forcing $\PP$ \textit{preserves} $f$\textit{-stationary sets}\index{Forcing!preserves fstationary sets@preserves $f$-stationary sets} iff any $f$-stationary set is still $f$-stationary in $V^\PP$.
\end{enumerate}
\end{defn}

We make use of $f$-stationarity only when $f$ witnesses $\diab$. However, with the above definition it makes sense to talk about $f$-stationarity in a forcing extension before we know that $f$ has been preserved. Note that all $f$-stationary sets are stationary, but the converse might fail, see Proposition \ref{noplusprop}. We will later see that $f$-stationary sets are the correct replacement of stationary set in our context. Most prominently this notion will be used in the definition of the $\MMpp$-variant $\fMMpp$ we introduce in Subsection \ref{fmppandrelatedforcingaxiomssection}. It will be useful to have an equivalent formulation of $f$-stationarity at hand. 

\begin{prop}\label{fstationaryequivformulationprop}
Suppose $f$ guesses $\BP$-filters. The following are equivalent for any set $S\subseteq\omega_1$:
\begin{enumerate}[label=\rn*]
\item\label{fstationaryequivformulationcond1} $S$ is $f$-stationary.
\item\label{fstationaryequivformulationcond2} Whenever $\langle D_\alpha\mid\alpha<\omega_1\rangle$ is a sequence of dense subsets of $\BP$, the set 
$$\{\alpha\in S\mid \forall\beta<\alpha\ f(\alpha)\cap D_\beta\neq\emptyset\}$$
is stationary.
\end{enumerate}
\end{prop}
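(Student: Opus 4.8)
The plan is to prove the two implications separately; the forward direction \ref{fstationaryequivformulationcond1}$\Rightarrow$\ref{fstationaryequivformulationcond2} is routine, while the reverse direction carries the real content. Throughout I will use the following reformulation of $f$-slimness: for a countable $X\prec\Htheta$ with $f,\BP\in X$, the structure $X$ is $\fgood$ exactly when $f(\delta^X)\cap D\neq\emptyset$ for every dense $D\subseteq\BP$ with $D\in X$. Indeed, since $\BP\subseteq\omega_1$ and $\pi_X$ fixes ordinals below $\delta^X$, we have $\pi_X^{-1}(\BP)=\BP\cap\delta^X$, and the dense subsets of $\BP\cap\delta^X$ lying in $M_X$ are precisely the collapses $\pi_X^{-1}(D)=D\cap\delta^X$ of dense $D\subseteq\BP$ with $D\in X$; as $f(\delta^X)\subseteq\BP\cap\delta^X$, genericity of $f(\delta^X)$ over $M_X$ thus amounts to $f(\delta^X)\cap D\neq\emptyset$ for all such $D$.

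For \ref{fstationaryequivformulationcond1}$\Rightarrow$\ref{fstationaryequivformulationcond2}, I fix a sequence $\langle D_\alpha\mid\alpha<\omega_1\rangle$ of dense subsets of $\BP$ and a club $C\subseteq\omega_1$, and produce $\alpha\in C$ lying in the set from \ref{fstationaryequivformulationcond2}. Choosing $\theta\geq\omega_2$ regular with $f,\BP,\langle D_\alpha\mid\alpha<\omega_1\rangle,C\in\Htheta$, the collection of countable $X\prec\Htheta$ containing these parameters is club in $[\Htheta]^\omega$, so by $f$-stationarity there is an $\fgood$ $X$ in this club with $\delta^X\in S$. Since $C\in X$ is a club, $\delta^X=X\cap\omega_1\in C$; and for every $\beta<\delta^X$ we have $\beta\in X$, hence $D_\beta\in X$, so by the reformulation $f(\delta^X)\cap D_\beta\neq\emptyset$. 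Thus $\delta^X$ is an element of $C$ lying in the set of \ref{fstationaryequivformulationcond2}, which is therefore stationary.

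The reverse direction \ref{fstationaryequivformulationcond2}$\Rightarrow$\ref{fstationaryequivformulationcond1} is where the work lies. I fix $\theta\geq\omega_2$ regular and a club $\mathcal C\subseteq[\Htheta]^\omega$, and after intersecting with the club of elementary substructures I may assume every member of $\mathcal C$ is an $X\prec\Htheta$ with $f,\BP,<^*\in X$ for a fixed well-order $<^*$ of $\Htheta$. I will build a continuous $\subseteq$-increasing elementary chain $\langle N_\alpha\mid\alpha<\omega_1\rangle$ of countable $N_\alpha\in\mathcal C$ with $N_\alpha\in N_{\alpha+1}$, together with a single sequence $\langle D_\beta\mid\beta<\omega_1\rangle$ of dense subsets of $\BP$ that is \emph{coherent} with the chain, meaning that for club-many $\alpha$ every dense $D\subseteq\BP$ with $D\in N_\alpha$ equals some $D_\beta$ with $\beta<\delta^{N_\alpha}$. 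Writing $\delta_\alpha=\delta^{N_\alpha}$, the map $\alpha\mapsto\delta_\alpha$ is continuous and increasing, so the set $E$ of $\alpha$ with $\delta_\alpha=\alpha$ at which coherence holds is club. Applying \ref{fstationaryequivformulationcond2} to $\langle D_\beta\mid\beta<\omega_1\rangle$ gives that $T=\{\alpha\in S\mid\forall\beta<\alpha\ f(\alpha)\cap D_\beta\neq\emptyset\}$ is stationary, so I may pick $\alpha\in T\cap E$. Then $N_\alpha\in\mathcal C$ and $\delta^{N_\alpha}=\alpha\in S$, and by coherence every dense $D\subseteq\BP$ with $D\in N_\alpha$ is some $D_\beta$ with $\beta<\alpha$, whence $f(\delta^{N_\alpha})\cap D\neq\emptyset$; by the reformulation $N_\alpha$ is $\fgood$. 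Hence $S$ is $f$-stationary.

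The main obstacle is the coherent enumeration: arranging a single $\omega_1$-sequence of dense sets that captures exactly the dense subsets of $\BP$ sitting inside each initial model $N_\alpha$. I expect to handle this by interleaving the two constructions, using at stage $\alpha+1$ that $N_\alpha\in N_{\alpha+1}$ and $<^*\in N_{\alpha+1}$ to enumerate into the countably many slots $D_\beta$ with $\delta_\alpha\leq\beta<\delta_{\alpha+1}$ all (countably many) dense subsets of $\BP$ lying in $N_\alpha$, filling unused slots with $\BP$ itself. At a closure point $\alpha=\delta_\alpha$ we then have $N_\alpha=\bigcup_{\xi<\alpha}N_\xi$, and every dense subset of $\BP$ in $N_\alpha$ already belongs to some $N_\xi$ with $\xi<\alpha$ and so was listed among $\{D_\beta\mid\beta<\delta_{\xi+1}\}\subseteq\{D_\beta\mid\beta<\alpha\}$, which is exactly the coherence required.
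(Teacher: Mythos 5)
Your proof is correct and follows what is essentially the intended argument: the paper states this proposition without proof (the results of this section are attributed to Woodin), and your two ingredients --- the characterization of $\fgood$ $X\prec\Htheta$ as those countable $X\prec\Htheta$ with $f,\BP\in X$ satisfying $f(\delta^X)\cap D\neq\emptyset$ for every dense $D\subseteq\BP$ with $D\in X$, and the interleaved construction of a continuous elementary chain together with a coherent $\omega_1$-enumeration of the dense sets appearing in its members --- constitute the standard route to this equivalence. One cosmetic correction: a well-order $<^*$ of $\Htheta$ is not an element of $\Htheta$, so you cannot literally require $<^*\in N_{\alpha+1}$; either expand the structure to $(\Htheta;\in,<^*)$ and take elementary substructures of that, or simply drop $<^*$ altogether, which is harmless here since the slot-filling enumeration at stage $\alpha+1$ can be chosen externally rather than inside $N_{\alpha+1}$.
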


\begin{prop}\label{diabequivprop}
Suppose $f$ guesses $\BP$-filters. The following are equivalent:
\begin{enumerate}[label=\rn*]
\item\label{diabequivcond1} $f$ witnesses $\diab$.
\item\label{diabequivcond2} $S^f_b$ is $f$-stationary for all $b\in\BP$.
\item\label{diabequivcond3} For any $b\in\BP$ and sequence $\langle D_\alpha\mid\alpha<\omega_1\rangle$ of dense subsets of $\BP$, 
$$\{\alpha\in S^f_b\mid\forall\beta<\alpha\ f(\alpha)\cap D_\beta\neq\emptyset\}$$
is stationary.
\end{enumerate}
\end{prop}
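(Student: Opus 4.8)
The plan is to establish the two equivalences (i)$\Leftrightarrow$(ii) and (ii)$\Leftrightarrow$(iii) separately; in fact both are nearly definitional once the right identifications are made, with the real content having already been placed in Proposition \ref{fstationaryequivformulationprop}.

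First I would prove (i)$\Leftrightarrow$(ii) by directly unwinding definitions, one $b\in\BP$ at a time. Fix $b$ and a regular $\theta\geq\omega_2$. Unfolding ``stationary in $[H_\theta]^\omega$'' into the nonempty-intersection-with-every-club formulation, clause (ii) of $\diab$ for this $b$ and $\theta$ asserts exactly that for every club $\mathcal C\subseteq[H_\theta]^\omega$ there is some $X\in\mathcal C$ which is $\fgood$ and satisfies $b\in f(\delta^X)$. By definition of $S^f_b$ we have $b\in f(\delta^X)$ iff $\delta^X\in S^f_b$, so this is precisely the statement that, for every regular $\theta\geq\omega_2$ and every club $\mathcal C$, there is an $\fgood$ $X\in\mathcal C$ with $\delta^X\in S^f_b$, i.e.\ that $S^f_b$ is $f$-stationary. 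Since $f$ guesses $\BP$-filters is a standing hypothesis, condition (i) is exactly the conjunction over all $b\in\BP$ of these assertions, and (ii) is the conjunction over all $b$ that $S^f_b$ is $f$-stationary; hence (i)$\Leftrightarrow$(ii).

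Next I would prove (ii)$\Leftrightarrow$(iii), which is immediate from Proposition \ref{fstationaryequivformulationprop} applied to the set $S=S^f_b$ for each $b\in\BP$. That proposition says $S^f_b$ is $f$-stationary iff for every sequence $\langle D_\alpha\mid\alpha<\omega_1\rangle$ of dense subsets of $\BP$ the set $\{\alpha\in S^f_b\mid\forall\beta<\alpha\ f(\alpha)\cap D_\beta\neq\emptyset\}$ is stationary, and this is verbatim condition (iii) for that $b$. Quantifying over all $b\in\BP$ then gives (ii)$\Leftrightarrow$(iii) and completes the chain.

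The only points requiring care, and the closest thing to an obstacle, are bookkeeping matters: checking that the quantifier over regular $\theta\geq\omega_2$ in the definition of $\diab$ matches the quantifier already built into the definition of ``$f$-stationary'', and confirming that ``stationary in $[H_\theta]^\omega$'' really does unfold to the nonempty-intersection-with-every-club condition used there. Once these quantifiers are aligned, no genuinely new combinatorics is needed beyond Proposition \ref{fstationaryequivformulationprop}, and the whole proposition reduces to translating between the ``slim structure'' language of $\diab$ and the ``$f$-stationary'' language.
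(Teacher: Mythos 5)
Your proof is correct, and it is exactly the argument the paper leaves implicit: the paper states this proposition without proof (deferring to Woodin's work as essentially routine), and the intended justification is precisely your definitional unwinding of stationarity in $[H_\theta]^\omega$ for (i)$\Leftrightarrow$(ii) together with the application of Proposition \ref{fstationaryequivformulationprop} to each $S^f_b$ for (ii)$\Leftrightarrow$(iii). No gaps; the quantifier bookkeeping you flag is the only content, and you handle it properly.
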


We mention a handy corollary.

\begin{cor}\label{preservefcor}
Suppose $f$ witnesses $\diab$. Any forcing preserving $f$-stationary sets preserves $f$.
\end{cor}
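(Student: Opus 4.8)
The plan is to derive Corollary \ref{preservefcor} directly from Proposition \ref{diabequivprop}, which is the natural characterization of witnessing $\diab$ in terms of $f$-stationarity. Suppose $f$ witnesses $\diab$ and let $\PP$ be a forcing preserving $f$-stationary sets. I want to show $f$ still witnesses $\diab$ in $V^\PP$. By Proposition \ref{diabequivprop}, applied in $V^\PP$, it suffices to check condition \ref{diabequivcond2} there, namely that $S^f_b$ is $f$-stationary in $V^\PP$ for every $b\in\BP$.

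First I would fix an arbitrary $b\in\BP$. Since $f$ witnesses $\diab$ in $V$, Proposition \ref{diabequivprop} (the implication \ref{diabequivcond1}$\Rightarrow$\ref{diabequivcond2}) tells us that $S^f_b$ is $f$-stationary in $V$. Now the crucial point is that the definition of $S^f_b=\{\alpha<\omega_1\mid b\in f(\alpha)\}$ is absolute between $V$ and $V^\PP$: it depends only on $f$, $b$, and $\omega_1$, all of which are unchanged (note that preserving $f$-stationary sets entails preserving $\omega_1$, since $\omega_1$ itself is $f$-stationary whenever $f$ witnesses $\diab$, or this can be read off from the fact that $f$-stationary sets are stationary hence the ambient $\omega_1$ must be preserved for the notion to make sense). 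Hence the \emph{same} set $S^f_b$ is under consideration in both models.

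The key step is then simply invoking the hypothesis that $\PP$ preserves $f$-stationary sets: since $S^f_b$ is $f$-stationary in $V$, it remains $f$-stationary in $V^\PP$. As this holds for every $b\in\BP$, condition \ref{diabequivcond2} of Proposition \ref{diabequivprop} is satisfied in $V^\PP$, and so by the reverse implication \ref{diabequivcond2}$\Rightarrow$\ref{diabequivcond1} of that proposition (now applied inside $V^\PP$) we conclude that $f$ witnesses $\diab$ in $V^\PP$. Since $G$ was an arbitrary $\PP$-generic, this means precisely that $\PP$ preserves $f$, as desired.

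I expect no genuine obstacle here; the corollary is essentially a formal consequence of the equivalence in Proposition \ref{diabequivprop}. The only subtlety worth a sentence is confirming that Proposition \ref{diabequivprop} is applicable in the generic extension $V^\PP$, i.e.\ that $f$ still guesses $\BP$-filters there. This is immediate, since guessing $\BP$-filters is a property of $f$ as a function on $\omega_1$ with values in $H_{\omega_1}$, and provided $\omega_1$ is preserved this property is downward absolute to $V^\PP$ (each $f(\alpha)$ remains a $\BP\cap\alpha$-filter, as being a filter on the fixed poset $\BP\cap\alpha$ is absolute). Thus the hypotheses of Proposition \ref{diabequivprop} are met in $V^\PP$ and the argument goes through.
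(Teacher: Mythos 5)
Your proof is correct and is exactly the argument the paper intends: the corollary is stated immediately after Proposition \ref{diabequivprop} precisely so that it follows by applying the equivalence \ref{diabequivcond1}$\Leftrightarrow$\ref{diabequivcond2} in both $V$ and $V^\PP$, transferring the $f$-stationarity of each $S^f_b$ via the preservation hypothesis. Your added checks (that $\omega_1$ is preserved, so $f$ still guesses $\BP$-filters and the proposition applies in $V^\PP$) are the right side conditions to verify and are handled correctly.
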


\begin{prop}\label{diabaequivprop}
Suppose $f$ guesses $\BP$-filters. The following are equivalent:
\begin{enumerate}[label=\rn*]
\item\label{diabaequivcond1} $f$ witnesses $\diaba$.
\item\label{diabaequivcond2} For any $b\in\BP$, $S^f_b$ is stationary and all stationary sets are $f$-stationary.
\item\label{diabaequivcond3} If $D$ is dense in $\BP$ then 
$$\{\alpha<\omega_1\mid\ f(\alpha)\cap D\neq\emptyset\}$$
contains a club and for all $b\in\BP$, $S^f_b$ is stationary.
\item\label{diabaequivcond4} All countable $X\prec H_\theta$ with $f\in X$ and $\theta\geq \omega_2$ regular are $\fgood$ and moreover for all $b\in \BP$, $S^f_b$ is stationary.
\end{enumerate}
\end{prop}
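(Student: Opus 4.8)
The plan is to strip off the conjunct common to all four items and then prove a cycle of equivalences among the remaining ``first halves''. Observe first that each of \ref{diabaequivcond1}--\ref{diabaequivcond4} asserts ``$S^f_b$ is stationary for every $b\in\BP$'' (for \ref{diabaequivcond1} this is the \emph{moreover} clause of $(ii)^{\plus}$). Unfolding the definition of $\diaba$ under the standing hypothesis that $f$ guesses $\BP$-filters, item \ref{diabaequivcond1} is precisely the conjunction of this common clause with
\[
(\ast)\quad \{X\prec H_\theta\mid X\text{ is }\fgood\}\text{ contains a club of }[H_\theta]^\omega\text{ for every regular }\theta\geq\omega_2.
\]
Hence it suffices to prove that $(\ast)$, ``all stationary sets are $f$-stationary'', the club-guessing clause of \ref{diabaequivcond3}, and ``every countable $X\prec H_\theta$ with $f\in X$ is $\fgood$'' are pairwise equivalent; the common stationarity clause then yields the Proposition. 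None of these equivalences will use the common clause.

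Before the cycle I would record one bookkeeping fact about collapses. If $X\prec H_\theta$ is countable with $f\in X$ then $\omega_1\in X$ and $\BP\in X$, so $X\cap\omega_1=\delta^X$ is an ordinal and the collapse is the identity on $\delta^X$; consequently $\BB\cap\delta^X=\pi_X^{-1}(\BP)\in M_X$, and the dense subsets of $\BB\cap\delta^X$ lying in $M_X$ are exactly the sets $D\cap\delta^X$ for $D\in X$ dense in $\BP$. Since $f(\delta^X)\subseteq\BB\cap\delta^X$, it follows that $X$ is $\fgood$ if and only if $f(\delta^X)\cap D\neq\emptyset$ for every dense $D\subseteq\BP$ with $D\in X$.

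Now the cycle. For $(\ast)\Rightarrow$ (the clause of \ref{diabaequivcond3}): given a club $\mathcal C$ of $\fgood$ structures and a dense $D\subseteq\BP$, the set $\mathcal C'=\{X\in\mathcal C\mid D\in X\}$ is still club, every $X\in\mathcal C'$ satisfies $\delta^X\in\{\alpha\mid f(\alpha)\cap D\neq\emptyset\}$ by the bookkeeping fact, and the projection $\{\delta^X\mid X\in\mathcal C'\}$ contains a club of $\omega_1$. For (the clause of \ref{diabaequivcond3}) $\Rightarrow$ \ref{diabaequivcond4}: given $X\prec H_\theta$ with $f\in X$ and a dense $D\in X$, the set $E_D=\{\alpha\mid f(\alpha)\cap D\neq\emptyset\}$ is definable from $f$ and $D$, hence lies in $X$ and contains a club; by elementarity some club $C\in X$ has $C\subseteq E_D$, and since $C$ is closed and unbounded with $C\in X$ we get $\delta^X\in C\subseteq E_D$, i.e.\ $f(\delta^X)\cap D\neq\emptyset$; as $D$ was arbitrary, $X$ is $\fgood$. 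Finally \ref{diabaequivcond4} $\Rightarrow(\ast)$ is immediate, as $\{X\prec H_\theta\mid f\in X\}$ is itself a club of $\fgood$ structures.

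To fold ``all stationary sets are $f$-stationary'' into the cycle I would prove \ref{diabaequivcond4} $\Rightarrow$ (this clause) $\Rightarrow$ (clause of \ref{diabaequivcond3}). For the first: given stationary $S$, regular $\theta$ and a club $\mathcal D\subseteq[H_\theta]^\omega$, intersect with the club $\{X\mid f\in X\}$ and use that the projection of a club contains a club to find $X\in\mathcal D$ with $f\in X$ and $\delta^X\in S$; by \ref{diabaequivcond4} this $X$ is $\fgood$, witnessing $f$-stationarity of $S$. For the second, suppose some dense $D$ has $E_D$ not containing a club, so $S=\omega_1\setminus E_D$ is stationary, hence $f$-stationary; applying Proposition \ref{fstationaryequivformulationprop} to the constant sequence $D_\alpha=D$ makes $\{\alpha\in S\mid f(\alpha)\cap D\neq\emptyset\}$ stationary, contradicting $f(\alpha)\cap D=\emptyset$ for all $\alpha\in S$. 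The main obstacle, and the only step with genuine content, is the collapse bookkeeping fact and its use in (clause of \ref{diabaequivcond3}) $\Rightarrow$ \ref{diabaequivcond4}: this is exactly where slimness of \emph{every} $X$ containing $f$ gets pinned down to the purely $\omega_1$-level club-guessing behaviour of $f$, and where one must check that the dense sets of $M_X$ are precisely those coded into $X$.
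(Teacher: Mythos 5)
The paper itself never proves this proposition (it is one of the Section \ref{diabanddiabasection} statements deferred to Woodin's book), so there is no paper proof to compare against; judged on its own terms, your architecture is sound and almost every step checks out. Splitting off the common clause ``$S^f_b$ is stationary for all $b\in\BP$'', identifying \ref{diabaequivcond1} with $(\ast)$ plus that clause, running the cycle $(\ast)\Rightarrow$ club-guessing $\Rightarrow$ slimness $\Rightarrow(\ast)$, and sandwiching ``all stationary sets are $f$-stationary'' between the slimness clause and the club-guessing clause is a correct and complete plan. The collapse bookkeeping (identity below $\delta^X$, dense sets of $M_X$ being the traces $D\cap\delta^X$ of dense $D\in X$), the step $\delta^X\in C$ for a club $C\in X$, the projection-of-a-club fact, and the application of Proposition \ref{fstationaryequivformulationprop} to a constant sequence are all handled correctly.

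There is, however, one unjustified assertion, and it sits exactly where you claim the most content lies: in the bookkeeping fact you state that $f\in X$ implies $\BP\in X$. Under the bare hypothesis that $f$ guesses $\BP$-filters this is false, so your insistence that ``none of these equivalences will use the common clause'' cannot be maintained. Concretely, let $f(0)=\emptyset$ and $f(\alpha)=\{0\}$ for $0<\alpha<\omega_1$, let $X_0$ be the Skolem hull of $\{f\}$ in $(H_\theta;\in,\unlhd)$, pick $b\in\omega_1\setminus X_0$, and let $\BP=\{0,b\}$ with the discrete order. Then $f$ guesses $\BP$-filters, and the only dense subset of $\BP$ is $\BP$ itself, whose trace set $\{\alpha\mid f(\alpha)\cap\BP\neq\emptyset\}$ contains a club; so the club-guessing clause of \ref{diabaequivcond3} holds. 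But $\BP\notin X_0$ (else $b\in X_0$), so $X_0$ is not $\fgood$ --- the definition of $\fgood$ demands $\BP\in X$ --- and the clause of \ref{diabaequivcond4} fails. Thus the implication from the clause of \ref{diabaequivcond3} to the clause of \ref{diabaequivcond4} genuinely needs the common clause. The repair is one line: every one of \ref{diabaequivcond1}--\ref{diabaequivcond4} contains ``$S^f_b$ is stationary for all $b\in\BP$'', which in particular puts every $b\in\BP$ into some $f(\alpha)$, so $\BP=\bigcup_{\alpha<\omega_1}f(\alpha)$ is definable from $f$ and hence $\BP\in X$ whenever $f\in X$. With that fix (and the concession that the cycle does use the common clause), your proof is correct.
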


We will now give a natural equivalent formulation of $\diaba$.

Witnesses of $\diaba$ are simply codes for regular embeddings\footnote{Regular embeddings, also known as complete embeddings, are embeddings between partial orders which preserve maximal antichains.} of $\BP$ \mbox{into $\NS^+$}.

\begin{lemm}\label{diacomplemm}
The following are equivalent:
\begin{enumerate}[label=\rn*]
\item\label{diacompcond1} $\diaba$.
\item\label{diacompcond2} There is a regular embedding $\eta\colon\BP\rightarrow(\Pomo/\NS)^+$.
\end{enumerate}
\end{lemm}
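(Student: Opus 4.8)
The plan is to realize the isomorphism by sending each condition $b$ to the stationary set $S^f_b$. For the direction $\diaba\Rightarrow$ (ii), let $f$ witness $\diaba$ and define $\eta\colon\BP\to(\Pomo/\NS)^+$ by $\eta(b)=[S^f_b]_{\NS}$; this is well-defined because $S^f_b$ is stationary for every $b$. Since each $f(\alpha)$ is a filter, upward closure gives $S^f_b\subseteq S^f_c$ whenever $b\leq_\BP c$, so $\eta$ is order-preserving, and if $b\perp_\BP c$ then no $f(\alpha)$ with $\alpha>\max(b,c)$ can contain both (a filter would produce a common lower bound), so $S^f_b\cap S^f_c$ is bounded and $\eta(b)\perp\eta(c)$. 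If injectivity is part of the intended notion of embedding, it follows from separativity of $\BP$, consistent with the Remark after Definition~\ref{diadefn}.

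The substantive step is that $\eta$ preserves maximal antichains. First I would fix a maximal antichain $A\subseteq\BP$ and an arbitrary stationary $T\subseteq\omega_1$ and look for $a\in A$ with $T\cap S^f_a$ stationary. Choosing regular $\theta\geq\omega_2$ with $A,f,\BP\in H_\theta$, there are stationarily many countable $X\prec H_\theta$ with $A,f\in X$ and $\delta^X\in T$, and by Proposition~\ref{diabaequivprop}\,\ref{diabaequivcond4} each such $X$ is $\fgood$, i.e.\ $f(\delta^X)$ is $(\BP\cap\delta^X)$-generic over $M_X$. As $\pi_X^{-1}(A)=A\cap\delta^X$ is a maximal antichain of $\BP\cap\delta^X$ in $M_X$, genericity yields $a_X\in A\cap\delta^X$ with $a_X\in f(\delta^X)$, so $\delta^X\in T\cap S^f_{a_X}$ with $a_X<\delta^X$. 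The assignment $\delta^X\mapsto a_X$ is regressive on a stationary set, so by Fodor it is constant with some value $a\in A$ on a stationary set, whence $T\cap S^f_a$ is stationary. Thus $\{\eta(a)\mid a\in A\}$ is predense, and with incompatibility-preservation it is a maximal antichain, so $\eta$ is regular.

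For the converse, given a regular $\eta$ I would fix a stationary representative $A_b$ of $\eta(b)$ for each $b$ and set $f(\alpha)=\{b\in\BP\cap\alpha\mid\alpha\in A_b\}$, replaced by $\emptyset$ whenever this fails to be a filter; then $S^f_b=A_b$ modulo a bounded set, so every $S^f_b$ is stationary. It remains to show $f(\alpha)$ is a genuine filter on a club and that club-many $X$ are $\fgood$. The clean route is the generic ultrapower: let $U$ be generic for $\Pomo/\NS$ with $j\colon V\to M\cong\Ult(V,U)$ and $\crit(j)=\omega_1^V=[\mathrm{id}]_U$. Regularity of $\eta$ makes $H:=\{b\mid\eta(b)\in U\}$ a $\BP$-generic filter over $V$, and since $f(\alpha)\subseteq\alpha$ a {\L}o\'s computation gives $[f]_U=H$. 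Because $H$, $\BP$, each $A_b$ and each dense $D\subseteq\BP$ are subsets of $\crit(j)$, they lie in the well-founded part of $M$, so possible ill-foundedness is harmless and {\L}o\'s applies to the $\Delta_0$ assertions ``$[f]_U$ is a filter'' and ``$[f]_U$ meets $j(D)\cap\crit(j)=D$''. As $H$ is generic for \emph{every} $U$, the sets $\{\alpha\mid f(\alpha)\text{ is a filter}\}$ and $\{\alpha\mid f(\alpha)\cap D\neq\emptyset\}$ (for each fixed dense $D$) lie in every generic $U$, hence each contains a club. Fixing clubs $C_D$ for the latter and a club $C^{\ast}$ avoiding the non-filter points, a diagonal-intersection argument shows that every $X\prec H_\theta$ with $f,\langle C_D\rangle,C^{\ast}\in X$ and $\delta^X\in\bigcap\{C\in X\mid C\text{ club}\}$ is $\fgood$, and these $X$ form a club; this is exactly clause $(ii)^{\plus}$, so $f$ witnesses $\diaba$.

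The hard part will be this second direction: passing from the purely mod-$\NS$ relations satisfied by the $A_b$ to a function that is a filter \emph{pointwise} and generic over \emph{every} relevant $M_X$. A naive pointwise definition is a filter only off an a priori uncontrolled $\omega_1$-union of nonstationary sets, and the point of invoking the generic ultrapower (equivalently, the full maximal-antichain-preservation of $\eta$) is precisely to collapse those $\omega_1$-many separate nonstationarity statements into single club-containments, which is what the diagonal argument then consumes.
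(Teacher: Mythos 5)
Your proof is correct, and your forward direction is exactly the paper's intended approach: the map $b\mapsto[S^f_b]_{\NS}$ is precisely the embedding $\eta_f$ that the paper introduces immediately after the lemma as the ``embedding associated to $f$''. The paper itself omits the proof of this lemma (the results of this section are deferred to Woodin's treatment of $\diacol$ and $\diacolp$), so only this much can be compared directly; your Fodor argument for predensity of $\eta_f[A]$ and your generic-ultrapower argument for the converse are both sound and standard. Two points of polish in the converse direction. First, as written there is a slight circularity: you apply the {\L}o\'s computation $[f]_U=H$ to the already-corrected $f$, but the identity $S^f_b=A_b$ modulo $\NS$ (hence $[f]_U=H$) presupposes that the set of non-filter points is nonstationary, which is exactly what is being proved; the fix is to run {\L}o\'s on the uncorrected assignment $f_0(\alpha)=\{b\in\BP\cap\alpha\mid\alpha\in A_b\}$, conclude that $\{\alpha\mid f_0(\alpha)\text{ is a filter on }\BP\cap\alpha\}$ contains a club, and only then pass to the corrected $f$ (you flag this difficulty yourself in your final paragraph). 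Second, the closing diagonal-intersection step is unnecessary: for any countable $X\prec H_\theta$ with $f\in X$, each set $W_D=\{\alpha\mid f(\alpha)\cap D\neq\emptyset\}$ for dense $D\in X$ is definable from $f,D\in X$ and contains a club, so by elementarity it contains a club lying in $X$, and $\delta^X$ automatically belongs to every club that is an element of $X$; this yields clause $(ii)^{\plus}$ (indeed the stronger statement in Proposition~\ref{diabaequivprop}~\ref{diabaequivcond4}) outright.
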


The argument above suggests the following definition.
\begin{defn}
Suppose $f$ witnesses $\diab$. We define 
$$\eta_f\colon \BP\rightarrow (\Pomo/\NS)^+$$
by $b\mapsto[S^f_b]_{\NS}$ and call $\eta_f$ the \textit{embedding associated to} $f$\index{etaf@$\eta_f$ (the embedding associated to $f$)}.
\end{defn}

We will now show that $\diab$ is consistent for any forcing $\BP\subseteq\omega_1$, even simultaneously so for all such $\BP$. We will deal with the consistency of $\diaba$ in the next section.

\begin{prop}\label{diamondprop}
Assume $\diamondsuit$. Then $\diab$ holds for any poset $\BP\subseteq\omega_1$.
\end{prop}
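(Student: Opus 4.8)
The plan is to use a $\diamondsuit$-sequence to directly guess, at stationarily many levels $\alpha<\omega_1$, the transitive collapse $M_X$ of a countable $X\prec\Htheta$ together with a distinguished condition, and to read off from this guess a filter that is generic over $M_X$ and contains the distinguished condition. Since $\diamondsuit$ implies $\CH$, I would fix a pairing function $p\colon\omega_1\times\omega_1\to\omega_1$ which maps $\alpha\times\alpha$ onto $\alpha$ for club-many $\alpha$, and take a two-dimensional $\diamondsuit$-sequence $\langle(A_\alpha,B_\alpha)\mid\alpha<\omega_1\rangle$ guessing pairs of subsets of $\omega_1$ (equivalent to $\diamondsuit$). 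Via $p$ one reads each $A_\alpha$ as a binary relation $E_\alpha$ on $\alpha$.

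\textbf{Definition of $f$.} Given $\alpha$, if $E_\alpha$ is well-founded and extensional with Mostowski collapse $N_\alpha$ a model of sufficiently much of $\ZFC$ satisfying $\omega_1^{N_\alpha}=\alpha$ and with $\BP\cap\alpha\in N_\alpha$ a poset, and if $B_\alpha=\{b_\alpha\}$ for some $b_\alpha\in\BP\cap\alpha$, then let $f(\alpha)$ be the filter on $\BP\cap\alpha$ obtained by fixing a canonical enumeration $\langle D_n\mid n<\omega\rangle$ of the dense subsets of $\BP\cap\alpha$ lying in $N_\alpha$, building a descending sequence $b_\alpha\ge q_0\ge q_1\ge\cdots$ with $q_n\in D_n$ (possible by density), and closing upward. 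Otherwise set $f(\alpha)=\emptyset$. Then $f(\alpha)$ is always a $\BP\cap\alpha$-filter, so $f$ guesses $\BP$-filters (clause (i)); and whenever the decoding succeeds, $f(\alpha)$ is $\BP\cap\alpha$-generic over $N_\alpha$ and contains $b_\alpha$.

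\textbf{Verification of clause (ii).} Fix $b\in\BP$, a regular $\theta\ge\omega_2$, and a club $\mathcal C\subseteq[\Htheta]^\omega$; I must produce an $\fgood$ $X\in\mathcal C$ with $b\in f(\delta^X)$. Build a continuous $\sqsubseteq$-increasing elementary tower $\langle X_\xi\mid\xi<\omega_1\rangle$ of countable $X_\xi\prec\Htheta$ with $f,\BP,b,\mathcal C,\langle(A_\alpha,B_\alpha)\mid\alpha<\omega_1\rangle\in X_0$, $X_\xi\in X_{\xi+1}$, and each $X_\xi\in\mathcal C$; set $X^*=\bigcup_\xi X_\xi$ and let $\sigma\colon M\to X^*$ be the transitive collapse, so $M$ is transitive of size $\aleph_1$ with $\omega_1^M=\omega_1$. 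Writing $\bar X_\xi=\sigma^{-1}(X_\xi)$ and $\delta_\xi=\delta^{X_\xi}$, the tower $\langle\bar X_\xi\mid\xi<\omega_1\rangle$ is continuous increasing with union $M$, satisfies $\bar X_\xi\cap\omega_1=\delta_\xi$, and the transitive collapse of $\bar X_\xi$ equals $M_{X_\xi}$. Enumerate $M$ as $\langle m_\nu\mid\nu<\omega_1\rangle$ with $\{m_\nu\mid\nu<\delta_\xi\}=\bar X_\xi$ for every $\xi$ (possible since the $\bar X_\xi$ form a continuous increasing tower of countable sets with union $M$), and let $A\subseteq\omega_1$ code the relation $\{(\nu,\mu)\mid m_\nu\in m_\mu\}$ through $p$. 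Then $A\cap\delta_\xi$ codes $(\bar X_\xi,\in)\cong M_{X_\xi}$ for club-many $\xi$, namely those with $p[\delta_\xi\times\delta_\xi]=\delta_\xi$; put $B=\{b\}$. By $\diamondsuit$ the set of $\alpha$ with $(A_\alpha,B_\alpha)=(A\cap\alpha,B\cap\alpha)$ is stationary, so it meets the club of those $\xi$ at which the coding is coherent, $\delta_\xi>b$, and $X_\xi\in\mathcal C$. For such a $\delta_\xi$ the guess decodes to $N_{\delta_\xi}=M_{X_\xi}$ and $b_{\delta_\xi}=b$, whence $f(\delta_\xi)$ is $\BP\cap\delta_\xi$-generic over $M_{X_\xi}$ and contains $b$. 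Thus $X_\xi$ is $\fgood$, lies in $\mathcal C$, and satisfies $b\in f(\delta^{X_\xi})$, completing the verification.

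\textbf{Main obstacle.} The delicate point is the coherent coding: producing a single $A\subseteq\omega_1$ whose initial segments $A\cap\delta_\xi$ code exactly the collapses $M_{X_\xi}$ along a club, so that the one-dimensional guessing of $\diamondsuit$ can catch an entire countable elementary substructure at once. This requires choosing the enumeration of $M$ to respect the tower and the pairing $p$ to collapse $\delta_\xi\times\delta_\xi$ to $\delta_\xi$ cofinally. Once this is arranged, the apparent self-reference (the guessed model $M_{X_\xi}$ contains the collapse of $f$ itself) causes no difficulty, since genericity only concerns dense subsets of $\BP\cap\delta_\xi$ lying in $M_{X_\xi}$, and $f(\delta_\xi)$ meets all of these by design.
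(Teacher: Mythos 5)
Your proof is correct. The paper itself does not spell out an argument for this proposition (it defers to Woodin's treatment of $\diacol$), so the natural comparison is with the route the paper's own toolkit suggests, namely verifying condition \rn{3} of Proposition \ref{diabequivprop}: code a pair $(\langle D_\beta\mid\beta<\omega_1\rangle, b)$ of a dense-set sequence and a condition into a subset of $\omega_1$, note that for club-many $\alpha$ each $D_\beta\cap\alpha$ ($\beta<\alpha$) is dense in $\BP\cap\alpha$, and let $f(\alpha)$ meet the decoded sets; this avoids elementary submodels entirely, at the cost of invoking the (unproved in the paper) equivalence \rn{3}$\Rightarrow$\rn{1}. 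You instead verify clause $(ii)$ of Definition \ref{diadefn} directly, which forces you to make $\diamondsuit$ catch entire transitive collapses $M_{X_\xi}$; your coherent-enumeration device (choosing the enumeration $\langle m_\nu\mid\nu<\omega_1\rangle$ of the collapsed union so that $\{m_\nu\mid\nu<\delta_\xi\}=\bar X_\xi$, and restricting to $\xi$ where the G\"odel pairing collapses $\delta_\xi\times\delta_\xi$ onto $\delta_\xi$) is exactly the right way to do this, and all the absoluteness points you need (density of sets in $N_\alpha$ is absolute since $\BP\cap\alpha\subseteq\alpha\subseteq N_\alpha$; the decoded collapse at $\delta_\xi$ really is $M_{X_\xi}$) go through. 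The payoff of your version is that it is self-contained and works uniformly for every regular $\theta\geq\omega_2$ and every club in $[H_\theta]^\omega$; the payoff of the dense-set route is brevity once Proposition \ref{diabequivprop} is available. Two cosmetic remarks: $\CH$ is not needed for the pairing function (the G\"odel pairing works outright), and to make $f$ a genuine function you should fix in advance a choice mechanism (e.g.\ a wellordering of $H_{\omega_1}$ or bijections $e_\beta\colon\omega\to\beta$) for the enumeration of dense sets and the descending sequence, as your word ``canonical'' implicitly assumes.
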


\begin{cor}\label{consistentcor}
Suppose $\BP\subseteq\omega_1$ is a forcing. Then $\diab$ holds in $V^{\Add(\omega_1,1)}$.
\end{cor}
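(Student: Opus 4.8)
The plan is to derive this immediately from Proposition \ref{diamondprop} together with two standard facts about $\Add(\omega_1, 1)$, whose conditions are countable partial functions from $\omega_1$ to $2$ ordered by reverse inclusion. First I would note that $\Add(\omega_1,1)$ is $\sigma$-closed: the union of a descending $\omega$-sequence of conditions is again a countable partial function, hence a condition. Consequently $\Add(\omega_1,1)$ adds no new $\omega$-sequences of ordinals, and in particular preserves $\omega_1$. Since $\BP$ is coded as a subset of $\omega_1$ in $V$ and $\omega_1^V=\omega_1^{V[G]}$ for $G$ generic, the poset $\BP$ is unchanged and remains a forcing with underlying set $\subseteq\omega_1$ in $V[G]$.

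Next I would invoke the classical fact that $\Add(\omega_1,1)$ forces $\diamondsuit$, which holds regardless of the ground model. The generic can be reorganized into a $\diamondsuit$-sequence by a standard density argument: fixing a partition of $\omega_1$ into countable intervals $\langle I_\alpha\mid\alpha<\omega_1\rangle$ together with bijections $I_\alpha\cong\alpha$, the generic function reads off a sequence $\langle A_\alpha\mid\alpha<\omega_1\rangle$ with $A_\alpha\subseteq\alpha$. Given a name $\dot A$ for a subset of $\omega_1$, a name $\dot C$ for a club, and a condition $p$, one uses $\sigma$-closure to build a descending sequence of conditions deciding longer and longer initial segments of $\dot A$ and of a club member, arranging at the limit a stage $\alpha$ with $\dot A\cap\alpha=A_\alpha$ and $\alpha\in\dot C$; this shows $\{\alpha<\omega_1\mid \dot A\cap\alpha=A_\alpha\}$ is forced to be stationary.

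Finally, working inside $V[G]=V^{\Add(\omega_1,1)}$, we have $\diamondsuit$, and $\BP$ is a poset $\subseteq\omega_1$ there, so applying Proposition \ref{diamondprop} inside $V[G]$ produces a witness $f$ for $\diab$ in $V[G]$, as required. There is no genuine obstacle beyond the invocation of these two standard properties of $\Add(\omega_1,1)$; the only point requiring a line of care is the observation that $\BP$ survives unchanged as a poset $\subseteq\omega_1$, which is exactly what $\sigma$-closure and preservation of $\omega_1$ guarantee.
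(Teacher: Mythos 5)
Your proposal is correct and is exactly the argument the paper intends: the corollary follows from Proposition \ref{diamondprop} applied in $V^{\Add(\omega_1,1)}$, using the classical facts that $\Add(\omega_1,1)$ is $\sigma$-closed (so $\omega_1$ and the poset $\BP\subseteq\omega_1$ survive unchanged) and that it forces $\diamondsuit$. No gaps; your sketch of the density argument for $\diamondsuit$ in the extension is the standard one and needs no further elaboration here.
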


In a number of arguments, we will deal with $\fgood$ $X\prec H_\theta$ that become thicker over time, i.e.~at a later stage there will be some $\fgood$ $X\sqsubseteq Y\prec H_\theta$. 

\begin{defn}\label{canonicalembeddingdefn}
In the above case of $X\sqsubseteq Y$, we denote the canonical elementary embedding from $M_X$ to $M_Y$ by 
$$\mu_{X, Y}\colon M_X\rightarrow M_Y.$$
$\mu_{X, Y}$\index{muXY@$\mu_{X, Y}$} is given by $\pi_Y^{-1}\circ\pi_X$.
\end{defn}

Usually, both $X$ and $Y$ will be $\fgood$. It is then possible to lift $\mu_{X, Y}$.

\begin{prop}\label{canonicalliftprop}
Suppose $f$ guesses $\BP$-filters and $X, Y\prec H_\theta$ are both $\fgood$ with $X\sqsubseteq Y$. Then the lift of $\mu_{X, Y}$ to 
$$\mu_{X, Y}^+\colon M_X[f(\delta^X)]\rightarrow M_Y[f(\delta^X)]$$
exists.
\end{prop}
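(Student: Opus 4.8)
The plan is to reduce the statement to the standard lifting lemma for generic elementary embeddings, after observing that, because $X \sq Y$, the relevant collapsed posets and the relevant generic filters on the two sides literally coincide.

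First I would record the collapse computation. From $X \sq Y$ we get $\delta := \delta^X = \delta^Y$, an ordinal below $\omega_1$ (this is implicit in $X$ being $\fgood$, since $f(\delta^X)$ must be defined). Both collapses $\pi_X^{-1}$ and $\pi_Y^{-1}$ then fix every ordinal $<\delta$ and map $\omega_1$ to $\delta$. Since $\BP\subseteq\omega_1$ and $\BP\in X\cap Y$, this yields
$$\pi_X^{-1}(\BP)=\BP\cap\delta=\pi_Y^{-1}(\BP),$$
so the collapsed poset is the \emph{same} object $\BP\cap\delta$ inside both $M_X$ and $M_Y$. As $\mu_{X, Y}=\pi_Y^{-1}\circ\pi_X$ and both factors fix ordinals $<\delta$, the restriction $\mu_{X, Y}\res(\BP\cap\delta)$ is the identity, and $\mu_{X, Y}$ sends $\BP\cap\delta$ as computed in $M_X$ to $\BP\cap\delta$ as computed in $M_Y$.

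Next I would invoke that both $X$ and $Y$ are $\fgood$: the filter $f(\delta)$ is $(\BP\cap\delta)$-generic over $M_X$, and since $\delta^Y=\delta$ it is simultaneously $(\BP\cap\delta)$-generic over $M_Y$. Thus both $M_X[f(\delta)]$ and $M_Y[f(\delta)]$ are defined. Putting $G=H:=f(\delta)$, the previous step gives $\mu_{X, Y}[G]=G=H$, so the lifting hypothesis $\mu_{X, Y}[G]\subseteq H$ holds trivially. I would then apply the standard lifting lemma: for the elementary $j=\mu_{X, Y}\colon M_X\to M_Y$, the forcing $\BP\cap\delta\in M_X$, the generic $G$ over $M_X$, and the $(\BP\cap\delta)$-generic $H$ over $M_Y$ with $j[G]\subseteq H$, the assignment $j^+(\tau^{G}):=j(\tau)^{H}$ defines an elementary $\mu_{X, Y}^+\colon M_X[G]\to M_Y[H]$ extending $j$. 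Well-definedness and elementarity follow from the forcing theorem in $M_X$ and $M_Y$ together with $j[G]\subseteq H$: whenever some $p\in G$ forces a statement about $(\BP\cap\delta)$-names in $M_X$, elementarity moves this to $j(p)=p\in H$ forcing the corresponding statement in $M_Y$.

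I do not expect a deep obstacle; the entire content sits in the first step, namely the observation that $X\sq Y$ forces the two collapsed posets, and hence the single filter $f(\delta)$, to serve at once as a generic over $M_X$ and over $M_Y$, which makes the ``$j[G]\subseteq H$'' hypothesis automatic. The only point needing mild care is that $M_X$ and $M_Y$ satisfy a sufficient fragment of $\ZFC$ for the forcing theorem to be available; this is immediate since each is the transitive collapse of an elementary substructure of $H_\theta$.
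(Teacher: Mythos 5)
Your proposal is correct and follows essentially the same route as the paper's own proof: both arguments note that since $\delta^X=\delta^Y$, the embedding $\mu_{X,Y}$ fixes the collapsed poset $\pi_X^{-1}(\BP)=\BP\cap\delta^X=\pi_Y^{-1}(\BP)$ pointwise, that $f(\delta^X)$ is generic over both $M_X$ and $M_Y$ by $f$-slimness, and then invoke the standard lifting criterion with $j[G]\subseteq H$ holding trivially because $G=H=f(\delta^X)$. Your write-up merely makes explicit the collapse computation and the verification of Silver's criterion that the paper compresses into two sentences.
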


\begin{proof}
As $\delta^X=\delta^Y$, the critical point of $\mu_{X, Y}$ is ${>}\delta^X$ (if it exists). As $\pi_X^{-1}(\BP)$ is a forcing of size ${\leq}\omega_1^{M_X}=\delta^X$ and $f(\delta^X)$ is generic over both $M_X$ and $M_Y$, the lift exists.
\end{proof}

We consider the above proposition simultaneously as a definition: From now on $\mu_{X, Y}^+$\index{muXYplus@$\mu_{X, Y}^+$} will refer to this lift if it exists.

\begin{defn}
Suppose $f$ witnesses $\diab$. $\NSf$\index{NSf@$\NSf$ (the ideal of $f$-nonstationary sets)} is the ideal of $f$-nonstationary sets, that is 
$$\NSf=\{N\subseteq\omega_1\mid N\text{ is not }f\text{-stationary}\}.$$
\end{defn}

\begin{lemm}\label{nsfnormalideallemm}
Suppose $f$ witnesses $\diab$. $\NSf$ is a normal uniform ideal.
\end{lemm}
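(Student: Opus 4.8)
The plan is to route every ideal property of $\NSf$ through the reformulation of $f$-stationarity given in Proposition \ref{fstationaryequivformulationprop}, which trades $f$-stationarity for ordinary stationarity in $\omega_1$. For a sequence $\vec D=\langle D_\beta\mid\beta<\omega_1\rangle$ of dense subsets of $\BP$, write
\[
G_{\vec D}=\{\alpha<\omega_1\mid\forall\beta<\alpha\ f(\alpha)\cap D_\beta\neq\emptyset\}.
\]
Then Proposition \ref{fstationaryequivformulationprop} says that $S$ is $f$-stationary iff $S\cap G_{\vec D}$ is stationary for every such $\vec D$; equivalently, $S\in\NSf$ iff there is some $\vec D$ with $S\cap G_{\vec D}$ nonstationary. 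I would carry out all the work through this characterization, so that I only ever invoke properties of the ordinary nonstationary ideal $\NS$ on $\omega_1$ together with bookkeeping on dense sequences.

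The easy axioms come first. That $\emptyset\in\NSf$ and that $\NSf$ is downward closed are immediate, since $S'\cap G_{\vec D}\subseteq S\cap G_{\vec D}$ whenever $S'\subseteq S$. Properness, i.e.\ $\omega_1\notin\NSf$, follows because for any fixed $b\in\BP$ the set $S^f_b$ is $f$-stationary by Proposition \ref{diabequivprop}, and $f$-stationarity is monotone under $\subseteq$, so $\omega_1$ is $f$-stationary. Uniformity is equally short: every $f$-stationary set is stationary (as noted after the definition of $f$-stationarity), so every bounded — equivalently countable — subset of $\omega_1$, being nonstationary, lies in $\NSf$.

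For closure under finite unions I would use a standard interleaving trick. Given $N_0,N_1\in\NSf$ witnessed by $\vec D^0,\vec D^1$, fix a pairing bijection $\omega_1\to\omega_1\times 2$ that is continuous on a club $C$, and let $\vec E$ be the induced merge of $\vec D^0$ and $\vec D^1$; each $E_\gamma$ is one of the $D^i_\beta$ and hence dense. On $C$ one has $G_{\vec E}\cap C=G_{\vec D^0}\cap G_{\vec D^1}\cap C$, so that $(N_0\cup N_1)\cap G_{\vec E}$ is, modulo the nonstationary set $\omega_1\setminus C$, contained in $(N_0\cap G_{\vec D^0})\cup(N_1\cap G_{\vec D^1})$, a union of two nonstationary sets. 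Hence $(N_0\cup N_1)\cap G_{\vec E}$ is nonstationary and $N_0\cup N_1\in\NSf$.

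The main point is normality, and it is the same idea one rank higher. Let $\langle N_\alpha\mid\alpha<\omega_1\rangle$ be sets in $\NSf$, each $N_\alpha$ witnessed by a dense sequence $\vec D^\alpha=\langle D^\alpha_\beta\mid\beta<\omega_1\rangle$ with $N_\alpha\cap G_{\vec D^\alpha}$ nonstationary. I would fix a G\"odel pairing $p\colon\omega_1\to\omega_1\times\omega_1$ that is continuous on a club $C$, meaning $p\res\xi\colon\xi\to\xi\times\xi$ is a bijection for $\xi\in C$, and set $E_\gamma=D^\alpha_\beta$ where $(\alpha,\beta)=p(\gamma)$. Then for $\xi\in C\cap G_{\vec E}$ the filter $f(\xi)$ meets $D^\alpha_\beta$ for all $\alpha,\beta<\xi$, i.e.\ $\xi\in G_{\vec D^\alpha}$ for every $\alpha<\xi$. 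Consequently
\[
(\nabla_\alpha N_\alpha)\cap G_{\vec E}\cap C\ \subseteq\ \nabla_\alpha\,(N_\alpha\cap G_{\vec D^\alpha}),
\]
the diagonal union of nonstationary sets, which is nonstationary by normality of $\NS$. Adding back the nonstationary remainder $\omega_1\setminus C$ shows $(\nabla_\alpha N_\alpha)\cap G_{\vec E}$ is nonstationary, so $\nabla_\alpha N_\alpha\in\NSf$. The step demanding care is verifying the reflection identity $\{E_\gamma\mid\gamma<\xi\}=\{D^\alpha_\beta\mid\alpha,\beta<\xi\}$ on $C$ — that continuity of the pairing genuinely forces it — after which normality of $\NSf$ is handed to us by normality of the ordinary nonstationary ideal.
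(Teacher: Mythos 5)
Your proof is correct. There is nothing to compare it against in the paper itself: Lemma \ref{nsfnormalideallemm} is stated without proof (the section attributes these results essentially to Woodin), so your write-up supplies the missing argument, and it is a valid one. Writing $G_{\vec D}$ for the set of $\alpha$ such that $f(\alpha)$ meets $D_\beta$ for every $\beta<\alpha$, the characterization you extract from Proposition \ref{fstationaryequivformulationprop} — namely $S\in\NSf$ iff $S\cap G_{\vec D}$ is nonstationary for \emph{some} $\omega_1$-sequence $\vec D$ of dense subsets of $\BP$ — is exactly the negation of condition (ii) there, and each verification goes through as you describe: downward closure is trivial, properness uses that $S^f_b$ is $f$-stationary (Proposition \ref{diabequivprop}) together with upward monotonicity of $f$-stationarity, and uniformity uses $\NS\subseteq\NSf$. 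The two bookkeeping devices are the right ones: merging two dense sequences, and re-indexing $\omega_1$-many dense sequences along a G\"odel pairing, each only cost the nonstationary error set $\omega_1\setminus C$, where $C$ is the club of closure points of the pairing; and on $C$ the key containment $(\nabla_\alpha N_\alpha)\cap G_{\vec E}\cap C\subseteq\nabla_\alpha\bigl(N_\alpha\cap G_{\vec D^\alpha}\bigr)$ holds precisely because any witness $\alpha<\xi$ to $\xi\in\nabla_\alpha N_\alpha$ also satisfies $\xi\in G_{\vec D^\alpha}$, so normality of $\NS$ finishes the job. It is also good that you prove closure under finite unions directly rather than trying to extract it from the diagonal-union property, which would be circular at that stage. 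For comparison, the other natural route avoids Proposition \ref{fstationaryequivformulationprop} and works directly in $[H_\theta]^\omega$: if $\mathcal C_\alpha$ is a club containing no $\fgood$ $X$ with $\delta^X\in N_\alpha$, then $\{X\prec H_\theta\mid\vec{\mathcal C}\in X\wedge\forall\alpha\in X\cap\omega_1\ X\in\mathcal C_\alpha\}$ is a club containing no $\fgood$ $X$ with $\delta^X\in\nabla_\alpha N_\alpha$; your approach trades that diagonal club lemma one type level up for the already-recorded normality of $\NS$, which is a reasonable economy.
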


\subsection{Miyamoto's theory of nice iterations}

For all our intents and purposes, it does not matter in applications how the limit our iterations look like as long as we can prove a preservation theorem about it.\medskip

We give a brief introduction to Miyamoto's theory of nice iterations. These iterations are an alternative to $\mathrm{RCS}$-itertaions when dealing with the problem described above. In the proof of the iteration theorem for ($f$-)proper forcings, one constructs a generic condition $q$ by induction as the limit of a sequence $\langle q_n\mid n<\omega\rangle$. In case of \mbox{($f$-)}semiproper forcings, the length of the iteration may have uncountable cofinality in $V$ but become $\omega$-cofinal along the way. In this case, a sequence $\langle q_n\mid n<\omega\rangle$ with the desired properties cannot be in $V$. The key insight to avoid this issue is that one should give up linearity of this sequence and instead build a tree of conditions in the argument. Nice supports follow the philosophy of form follows function, i.e.~its definitions takes the shape of the kind of arguments it is intended to be involved in. The conditions allowed in a nice limit are represented by essentially the kind of trees that this inductive nonlinear constructions we hinted at above produces.\\

Miyamoto works with a general notion of iteration. For our purposes, we will simply define nice iterations by induction on the length. Successor steps are defined as usual, that is if $\PP_\gamma=\langle \PP_\alpha, \Q_\beta\mid \alpha\leq\gamma, \beta<\gamma\rangle$ is a nice iteration of length $\gamma$ and $\Q_\gamma$ is a $\PP_\gamma$-name for a forcing then $\langle \PP_\alpha, \Q_\beta\mid \alpha\leq\gamma+1, \beta\leq \gamma\rangle$ is a nice iteration of length $\gamma+1$ where $\PP_{\gamma+1}\cong \PP_\gamma\ast\Q_\gamma$. 

\begin{defn}[Miyamoto, \cite{miyamotosemiproper}]
Let $\vec \PP=\langle \PP_\alpha,\Q_\alpha\mid\alpha<\gamma\rangle$ be a potential nice iteration, that is

\begin{enumerate}[label=$(\vec{\PP}.\roman*)$]
    \item $\PP_\alpha$ is a nice iteration of length $\alpha$ for all $\alpha<\gamma$,
    \item $\PP_{\alpha+1}\cong\PP_\alpha\ast\Q_\alpha$ for all $\alpha+1<\gamma$ and
    \item $\PP_\beta\res\alpha=\PP_\alpha$ for all $\alpha\leq\beta<\gamma$.
\end{enumerate}
A \textit{nested antichain}\index{Nested antichain} in $\vec\PP$ is of the form 
$$(T, \langle T_n\mid n<\omega\rangle, \langle \sucm^n_T\mid n<\omega\rangle)$$
so that for all $n<\omega$ the following hold\footnote{Usually, we identify the nested antichain with $T$, its first component and write $\sucm(a)$ instead of $\sucm^n_T(a)$ if $n, T$ are clear from context.}:
\begin{enumerate}[label=$(\roman*)$]
\item $T=\bigcup_{n<\omega} T_n$.
\item $T_0=\{a_0\}$ for some $a_0\in\bigcup_{\alpha<\gamma}\PP_\alpha$.
\item $T_n\subseteq\bigcup_{\alpha<\gamma}\PP_\alpha$ and $\sucm^n_T\colon T_n\rightarrow \mathcal P(T_{n+1})$.
\item For $a\in T_n$ and $b\in\sucm^n_T(a)$, $\lh(a)\leq\lh(b)$ and $b\res\lh(a)\leq a$.
\item For $a\in T_n$ and distinct $b, b'\in \sucm^n_T(a)$, $b\res \lh(a)\perp b'\res\lh(a)$.
\item For $a\in T_n$, $\{b\res\lh(a)\mid b\in\sucm^n_T(a)\}$ is a maximal antichain below $a$ in $\PP_{\lh(a)}$.
\item $T_{n+1}=\bigcup\{\sucm^n_T(a)\mid a\in T_n\}$.
\end{enumerate}
Abusing notation, we will usually identify\index{sucnT@$\sucm^n_T(a)$} $T$ with 
$$(T, \langle T_n\mid n<\omega\rangle, \langle \sucm^n_T\mid n<\omega\rangle).$$
If $b\in \sucm^n_T(a)$ then we also write $a=\pred^n_T(b)$\index{prednT@$\pred^n_T(b)$}.
If $\beta<\gamma$ then $p\in \PP_\beta$ is a \textit{mixture of}\index{Mixture of $T$ up to $\beta$} $T$ \textit{up to} $\beta$ iff for all $\alpha<\beta,\ p\res\alpha$ forces
\begin{enumerate}[label=$(p.\roman*)$]
\item $p(\alpha)=a_0(\alpha)$ if $\alpha<\lh(a_0)$ and $a_0\res \alpha\in G_{\alpha}$,
\item $p(\alpha)=b(\alpha)$ if there are $a, b\in T$, $n<\omega$ with $b\in\sucm^n_T(a)$, $\lh(a)\leq\alpha<\lh(b)$ and $b\res\alpha\in G_\alpha$,
\item $p(\alpha)=\1_{\Q_\alpha}$ if there is a sequence $\langle a_n\mid n<\omega\rangle$ with $a_{n+1}\in\sucm^n_T(a_n)$, $\lh(a_n)\leq\alpha$ and $a_n\in G_{\lh(a_n)}$ for all $n<\omega$.
\end{enumerate}
If $\xi\leq\gamma$ is a limit, and $q$ is a sequence of length $\xi$ (may or may not be in $\PP_\xi$), $q$ is $(T, \xi)$\textit{-nice} if for all $\beta<\xi$, $q\res\beta\in\PP_\beta$ is a mixture of $T$ up to $\beta$.
\end{defn}

We refer to \cite{miyamotosemiproper} for basic results on nested antichains and mixtures. We go on and define nice limits.

\begin{defn}[Miyamoto, \cite{miyamotosemiproper}]
Suppose $\vec\PP=\langle \PP_\alpha,\Q_\alpha\mid\alpha<\gamma\rangle$ is a potential nice iteration of limit length $\gamma$. Let $\bar \PP$ denote the inverse limit along $\vec \PP$. The \textit{nice limit}\index{Nice limit} of $\vec \PP$ is defined as
$$\nicelim(\vec\PP)=\{p\in\bar\PP\mid\exists T\text{ a nested antichain of }\vec\PP\text{ and }p\text{ is }(T, \gamma)\text{-nice}\}.$$
$\nicelim(\vec\PP)$ inherits the order from $\bar\PP$.
\end{defn}
Finally, if $\vec \PP=\langle \PP_\alpha, \Q_\alpha\mid\alpha<\gamma\rangle$ is a potential nice iteration then
$$\langle \PP_\alpha,\Q_\beta\mid\alpha\leq\gamma,\beta<\gamma\rangle$$
is a \textit{nice iteration} \index{Nice iteration}of length $\gamma$ where $\PP_\gamma=\nicelim(\vec \PP)$.\\

The fundamental property of nice iterations is:
\begin{fact}[Miyamoto,\cite{miyamotosemiproper}]\label{m3.2fact}
Suppose $\PP=\standarditeration$ is a nice iteration and $T$ is a nested antichain in $\PP$. Then there is a mixture \mbox{of $T$}.
\end{fact}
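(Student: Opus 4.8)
The plan is to build the mixture $p$ by recursion on the coordinate $\alpha\le\gamma$, maintaining as an invariant that $p\res\alpha\in\PP_\alpha$ is a mixture of $T$ up to $\alpha$. At a successor coordinate I append a single name $p(\alpha)$ whose value is dictated, in $V[G_\alpha]$, by whichever of the clauses $(p.\mathrm{i})$–$(p.\mathrm{iii})$ applies; at a limit coordinate $\alpha$ I take the obvious union of the $p\res\beta$ for $\beta<\alpha$, which cohere because a mixture up to $\beta'$ restricted to $\beta<\beta'$ is again a mixture up to $\beta$. Since every proper restriction of this union is a mixture, the union is $(T,\alpha)$-nice (truncating the nodes of $T$ to coordinates below $\alpha$), and hence lies in $\PP_\alpha=\nicelim(\vec\PP\res\alpha)$; it is therefore a mixture up to $\alpha$. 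Carrying the recursion to $\gamma$ yields the desired mixture of $T$.

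The content is entirely in the successor step, and the first thing to record is the \emph{generic trace}. Working below $p\res\alpha$, I would show by an inner induction that the generic singles out a unique branch $a_0=b_0,b_1,b_2,\dots$ with $b_{n+1}\in\sucm^n_T(b_n)$, together with the crucial coherence $b_n\in G_{\lh(b_n)}$ for every $n$ with $\lh(b_n)\le\alpha$. The base case uses $p\res\lh(a_0)=a_0$ (which itself follows from clause $(p.\mathrm{i})$ and a short induction showing $p\res\beta\le a_0\res\beta$ for $\beta\le\lh(a_0)$). For the step, given $b_n\in G_{\lh(b_n)}$ the set $\{c\res\lh(b_n)\mid c\in\sucm^n_T(b_n)\}$ is a maximal antichain below $b_n$ in $\PP_{\lh(b_n)}$, so $G_{\lh(b_n)}$ meets it in a unique condition, determining a unique successor $b_{n+1}$ with $b_{n+1}\res\lh(b_n)\in G_{\lh(b_n)}$. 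The recursive definition forces $p$ to agree with $b_{n+1}$ on the interval $[\lh(b_n),\lh(b_{n+1}))$, and this is exactly what promotes $b_{n+1}\res\lh(b_n)\in G$ to $b_{n+1}\in G_{\lh(b_{n+1})}$ (note that $p$ itself is \emph{not} below $b_{n+1}$, since it disagrees with it on earlier coordinates; only the name values on the new interval are shared).

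With the trace in hand, exactly one of three situations holds below $p\res\alpha$: either $\alpha<\lh(a_0)$; or there is a unique $n$ with $\lh(b_n)\le\alpha<\lh(b_{n+1})$; or $\lh(b_n)\le\alpha$ for all $n$, so the branch is infinite with $\sup_n\lh(b_n)\le\alpha$. I then set $p(\alpha)$ equal to $a_0(\alpha)$, to $b_{n+1}(\alpha)$, or to $\1_{\Q_\alpha}$ respectively. Each case is decided by $G_\alpha$ alone, so $p(\alpha)$ is a legitimate $\PP_\alpha$-name; the coherence $b_{n+1}\res\alpha\in G_\alpha$ guarantees that in the middle case the name $b_{n+1}(\alpha)$ is evaluated in its intended context, and in the infinite case the sequence $\langle b_n\mid n<\omega\rangle$ directly witnesses the hypothesis of $(p.\mathrm{iii})$. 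Thus $p\res\alpha$ forces precisely $(p.\mathrm{i})$–$(p.\mathrm{iii})$ at coordinate $\alpha$, completing the successor step.

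The main obstacle I anticipate is exactly the trace bookkeeping of the second paragraph: proving that below $p\res\alpha$ the branch is uniquely determined, that the promotion $b_{n+1}\in G_{\lh(b_{n+1})}$ genuinely holds despite $p\not\le b_{n+1}$, and that $\alpha$ falls into exactly one of the three cases. This is where the maximal-antichain clause of the nested-antichain definition is indispensable, and where one must check that the branch computed from $G_\alpha$ never consults more of the generic than is available. By contrast the limit-coordinate step is formally automatic: $(T,\alpha)$-niceness of $p\res\alpha$ is nothing more than the conjunction of the invariants already established at smaller coordinates, which is precisely the membership criterion for $\nicelim(\vec\PP\res\alpha)$.
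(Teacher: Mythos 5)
Your limit step is where the real content of this Fact lives, and your treatment of it is wrong. Being $(T,\alpha)$-nice is \emph{not} the membership criterion for $\PP_\alpha=\nicelim(\vec\PP\res\alpha)$ at an intermediate limit $\alpha<\gamma$: membership requires a nested antichain \emph{of} $\vec\PP\res\alpha$, i.e.\ one all of whose nodes lie in $\bigcup_{\xi<\alpha}\PP_\xi$, whereas the given $T$ may have nodes of length ${\geq}\alpha$ (only at the top level $\gamma$ does $T$ itself qualify as a witness). Your parenthetical repair, truncating the nodes of $T$ below $\alpha$, does not produce a nested antichain: clauses $(v)$ and $(vi)$ of the definition are destroyed, because two conditions incompatible in $\PP_{\lh(a)}$ can have compatible (even identical) restrictions to a smaller $\PP_\beta$, so truncated successor sets are in general neither antichains nor maximal. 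Moreover, no cheap repair can work, because there is a structural obstruction at intermediate limits $\alpha$ with $\cof(\alpha)>\omega$: every branch of a nested antichain of $\vec\PP\res\alpha$ is an $\omega$-sequence of nodes of length ${<}\alpha$, hence bounded strictly below $\alpha$, so clause $(p.\mathrm{iii})$ makes every condition of $\PP_\alpha$ generically equal to $\1$ on a tail of coordinates below $\alpha$ (this is the mechanism behind Fact \ref{m3.5fact}); yet your $p\res\alpha$ generically copies a long node $b\in T$ with $\lh(b)\geq\alpha$ on exactly such a tail. These two facts are only reconcilable because $b\res\alpha$ is itself a condition of $\PP_\alpha$, hence generically eventually trivial via its \emph{own} niceness witness, and producing a witnessing antichain for $p\res\alpha$ requires splicing the witnesses of the long nodes $b\res\alpha$ into the short part of $T$. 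That splicing, carried out by induction on the length of the iteration with the refinement machinery behind Fact \ref{m2.11fact}, is the substance of Miyamoto's proof; the paper itself quotes the result from \cite{miyamotosemiproper} without proof precisely because none of this is automatic.

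There is a second, smaller gap in your successor step. The promotion of $b_{n+1}\res\lh(b_n)\in G_{\lh(b_n)}$ to $b_{n+1}\in G_{\lh(b_{n+1})}$ passes successor coordinates via clause $(p.\mathrm{ii})$ exactly as you say, but at limit coordinates of the interval $(\lh(b_n),\lh(b_{n+1})]$ it needs the principle ``if every proper restriction of a condition lies in the corresponding projected generic, then the condition is in the generic,'' which is false for general inverse-limit-style iterations and is, for nice iterations, again a lemma of Miyamoto's rather than something you may assume (it is essentially conclusion $(ii)$ of Fact \ref{m3.5fact}). This particular use is avoidable: for $p(\alpha)$ to be a well-defined name you only need that the hypotheses of $(p.\mathrm{i})$--$(p.\mathrm{iii})$ are mutually exclusive and that $(p.\mathrm{ii})$ determines a unique value, and that follows from upward closure of filters together with clauses $(iv)$ and $(v)$ alone, by showing inductively that at most one node of each $T_n$ can have its full restriction in the generic. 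So the successor step can be repaired without the trace; the limit step cannot be repaired short of reproducing Miyamoto's inductive argument, and as written your proof does not establish the Fact.
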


\begin{defn}[Miyamoto,\cite{miyamotosemiproper}]
Let $\PP=\standarditeration$ be a nice iteration. If $S, T$ are nested antichains in $\PP$ then $S\hooks T$\index{<hooks@$\hooks$} iff for any $n<\omega$ and $a\in S_n$ there is $b\in T_{n+1}$ with 
$$\lh(b)\leq \lh(a)\text{ and }a\res\lh(b)\leq b.$$
\end{defn}

\begin{fact}[Miyamoto, \cite{miyamotosemiproper}]\label{m2.11fact} Let $\PP=\langle \PP_\alpha,\Q_\beta\mid\alpha\leq\gamma, \beta<\gamma\rangle$ be a nice iteration of limit length $\gamma$. Suppose that
\begin{enumerate}[label=\rn*]
\item $T$ is a nested antichain in $\PP$,
\item $p$ is a mixture of $T$ and $s\in\PP$,
\item $r\in T_1$,
\item $s\leq r^\frown p\res[\lh(r), \gamma)$ and
\item $A\subseteq\gamma$ is cofinal.
\end{enumerate}
Then there is a nested antichain $S$ in $\PP$ with 
\begin{enumerate}[label=$(\alph*)$]
\item $s$ is a mixture of $S$,
\item If $S_0=\{c\}$ then $\lh(r)\leq\lh(c)\in A$ and $c\res\lh(r)\leq r$ and
\item $S\hooks T$.
\end{enumerate}
\end{fact}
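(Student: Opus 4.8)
The plan is to take the root of $S$ to be an initial segment of $s$ and then to ``shift $T$ down by one level'' below it, strengthening every node so that it tracks $s$. Concretely, using that $A$ is cofinal I first pick $\eta\in A$ with $\eta\geq\lh(r)$ and set $c=s\res\eta$, so that $S_0=\{c\}$; since $s\leq r^\frown p\res[\lh(r),\gamma)$ we have $c\res\lh(r)=s\res\lh(r)\leq r$, which already gives clause (b) and exhibits $r\in T_1$ as a hook-witness for $c$. I then build the levels $S_n$ by recursion, together with a map $\phi\colon S_n\to T_{n+1}$ witnessing $S\hooks T$: the successors of a node $a\in S_n$ with $\phi(a)=b$ are read off from $\sucm^{n+1}_T(b)$. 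For each $b'\in\sucm^{n+1}_T(b)$ the restrictions $b'\res\lh(b)$ form a maximal antichain below $b$; I pull this antichain back below $a\res\lh(b)\leq b$, refine it to a maximal antichain below $a$ in $\PP_{\lh(a)}$, and extend each piece by $s$ up to $\max(\lh(a),\lh(b'))$ to define the corresponding node of $S_{n+1}$, declaring its $\phi$-value to be $b'$.

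That this is a legal nested antichain is routine from the construction: incompatibility and maximality of the successor antichains are inherited from $T$ because the relevant decisions are made below $\lh(b)$, and the edge conditions are built in. The heart of the matter is the invariant that every node $\hat b$ of $S$ satisfies $\hat b\leq s\res\lh(\hat b)$ and that $\hat b\res\lh(\phi(\hat b))\leq\phi(\hat b)$. This is proved by induction along each branch: below a node that has committed to an edge $b\to b'$ of $T$, the mixture clause (p.ii) forces $p(\alpha)=b'(\alpha)$ on $[\lh(b),\lh(b'))$, and since $s\leq p$ on $[\lh(r),\gamma)$ this yields $s(\alpha)\leq b'(\alpha)$ there; as the nodes of $S$ copy $s$ on their edge intervals, the invariant propagates. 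Granting it, $s$ satisfies clauses (p.i) and (p.ii) for $S$ essentially by construction, since on each edge interval of $S$ the corresponding node literally equals $s$.

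The main obstacle is clause (p.iii): for $s$ to be a genuine mixture of $S$ rather than merely a condition below one, $s$ must be forced to be trivial on the played-out region of every branch of $S$, and \emph{a priori} $s$ can be nontrivial exactly where the shifted copy of $T$ has run out; this is already visible when $\op{cf}(\gamma)>\omega$, where every $\omega$-branch has supremum strictly below $\gamma$. The resolution is to exploit that $s$ itself lies in $\PP_\gamma=\nicelim(\vec\PP)$ and is therefore a mixture of some nested antichain $U$. I build the deep part of $S$ so that its branches continue to follow $s$ by simultaneously refining the shifted antichains of $T$ and the antichains of $U$, arranging that the played-out regions of $S$ coincide with those of $U$, where $s$ is indeed forced to equal $\1$. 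The hooks requirement for these deep nodes is met by hooking the level-$n$ node onto the level-$(n+1)$ node of the branch of $T$ it has committed to, whose length lies below that of the deep node; existence of a limit condition along this committed branch is supplied by Fact \ref{m3.2fact}. Carrying out this twofold recursion — following $U$ to capture the full support of $s$ while hooking into $T$ and keeping all maximal-antichain conditions intact — is the technically delicate part, whereas the cofinality of $A$ enters only through the harmless choice of $\eta=\lh(c)$.
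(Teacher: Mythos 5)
The paper itself contains no proof of Fact \ref{m2.11fact}: it is imported as a black box from Miyamoto, and the text explicitly refers the reader to \cite{miyamotosemiproper} for the basic results on nested antichains and mixtures. So there is no in-paper argument to compare yours against; I can only assess your reconstruction on its own terms.

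On those terms it is sound in substance and it isolates the right crux. The root $c=s\res\eta$ with $\eta\in A$, $\eta\geq\lh(r)$, the level shift $\phi\colon S_n\to T_{n+1}$ for the hooks, and the device of letting new nodes of $S$ literally copy $s$ on their edge intervals --- which makes clauses (p.i) and (p.ii) for $S$ trivial, and lets the invariant $\hat b\leq s\res\lh(\hat b)$, $\hat b\res\lh(\phi(\hat b))\leq\phi(\hat b)$ propagate using $s\leq r^\frown p\res[\lh(r),\gamma)$ together with the mixture property of $p$ --- are exactly the standard moves. You are also right that the only genuine difficulty is (p.iii), and that it is resolved by fixing a nested antichain $U$ of which $s$ is a mixture (such $U$ exists precisely because $s\in\PP_\gamma=\nicelim(\vec\PP)$) and refining each successor antichain of $S$ simultaneously against the shifted antichains of $T$ and the antichains of $U$. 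Two corrections, though. First, your appeal to Fact \ref{m3.2fact} for the hooks of the deep nodes is a red herring: no limit condition along the committed $T$-branch is needed anywhere; the hook of a node of $S_{n+1}$ into $T_{n+2}$ follows from the maintained invariant plus the fact that the restrictions of the successors of a node of $T_{n+1}$ form a maximal antichain below it, exactly as at the shallow levels. Second, ``arranging that the played-out regions of $S$ coincide with those of $U$'' is both stronger than needed and slightly misleading: what the argument requires is only one direction, namely that the tracking into $U$ descends one $U$-level per $S$-level coherently along each branch, with the length of the tracked $U$-node bounded by that of the $S$-node; then every played-out branch of $S$ induces a played-out branch of $U$ below the same $\alpha$ (the ancestors of a tracked $U$-node lie in the generic automatically, since a node of a nested antichain restricts below each of its predecessors), and clause (p.iii) for $U$ hands you triviality of $s$ there. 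With these repairs your twofold recursion does go through; what remains unverified in your sketch is only the routine bookkeeping (simultaneous refinement of the two pulled-back maximal antichains, and taking each new length to dominate the lengths of both tracked nodes), which is tedious but poses no obstacle.
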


The following describes the tool we use to construct conditions. 

\begin{defn}[Miyamoto, \cite{miyamotosemiproper}]
Let $\PP=\standarditeration$ be a nice iteration of limit length $\gamma$. A \textit{fusion structure}\index{Fusion structure} in $\PP$ is 
$$T, \langle \msup{p}{a, n},\ \msup{T}{a, n}\mid n<\omega,\ a\in T_n\rangle$$
where 
\begin{enumerate}[label=\rn*]
\item $T$ is a nested antichain in $\PP$
\end{enumerate} 
and for all $n<\omega$ and $a\in T_n$
\begin{enumerate}[label=$(\roman*)$, resume]
\item $\msup{T}{a, n}$ is a nested antichain in $\PP$,
\item $\msup{p}{a, n}\in\PP$ is a mixture of $\msup{T}{a, n}$,
\item $a\leq \msup{p}{a, n}\res\lh(a)$ and if $\msup{T_0}{a, n}=\{c\}$ then $\lh(a)=\lh(c)$ and
\item for any $b\in\sucm_T^n(a)$, $\msup{T}{b, n+1}\hooks \msup{T}{a, n}$, thus $\msup{p}{b, n+1}\leq \msup{p}{a, n}$.
\end{enumerate}
If $q\in\PP$ is a mixture of $T$ then $q$ is called a \textit{fusion}\index{Fusion structure!fusion of} of the fusion structure.
\end{defn}

\begin{fact}[Miyamoto, \cite{miyamotosemiproper}]\label{m3.5fact} Let $\PP=\standarditeration$ be a nice iteration of limit length $\gamma$. If $q\in\PP$ is a fusion of a fusion structure 
$$T, \langle \msup{p}{a, n},\ \msup{T}{a, n}\mid n<\omega,\ a\in T_n\rangle$$
and $G$ is $\PP$-generic with $q\in G$ then the following holds in $V[G]$:
There is a sequence $\langle a_n\mid n<\omega\rangle$ so that for all $n<\omega$
\begin{enumerate}[label=\rn*]
\item $a_0\in T_0$,
\item $a_n\in G_{\lh(a_n)}$,
\item $a_{n+1}\in\sucm_T^{n}(a_n)$ and
\item $\msup{p}{a_n, n}\in G$.
\end{enumerate}
\end{fact}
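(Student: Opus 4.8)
The plan is to pass to $V[G]$, read the branch $\langle a_n\mid n<\omega\rangle$ directly off the generic using the maximal antichain clause in the definition of a nested antichain, and then verify the four demands by unwinding the definition of a mixture together with the coherence built into a fusion structure. First I would construct the branch by recursion. Set $a_0$ to be the unique element of $T_0$; this gives \rn{1}. Given $a_n\in T_n$ with $a_n\in G_{\lh(a_n)}$, I use that $\{b\res\lh(a_n)\mid b\in\sucm^n_T(a_n)\}$ is a maximal antichain below $a_n$ in $\PP_{\lh(a_n)}$; since $a_n\in G_{\lh(a_n)}$, genericity produces a unique $b\in\sucm^n_T(a_n)$ with $b\res\lh(a_n)\in G_{\lh(a_n)}$, and I set $a_{n+1}\coloneqq b$, which secures \rn{3}.

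Next I would establish \rn{2}, that $a_n\in G_{\lh(a_n)}$, by induction on $n$ and, inside each step, by induction on the coordinate. For the base case, $q\res\lh(a_0)\leq a_0$: for $\alpha<\lh(a_0)$ the first mixture clause forces $q(\alpha)=a_0(\alpha)$ provided $a_0\res\alpha\in\dot G_\alpha$, so because $q\in G$, a coordinatewise induction yields $a_0\res\alpha\in G_\alpha$ at every $\alpha<\lh(a_0)$ and hence $a_0\in G_{\lh(a_0)}$. For the successor step, starting from $a_{n+1}\res\lh(a_n)\in G_{\lh(a_n)}$, the second mixture clause forces $q(\alpha)=a_{n+1}(\alpha)$ on the interval $[\lh(a_n),\lh(a_{n+1}))$ whenever $a_{n+1}\res\alpha\in\dot G_\alpha$, so again $q\in G$ propagates membership up to $\lh(a_{n+1})$. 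From \rn{2} and the fusion-structure inequality $a_n\leq\msup{p}{a_n,n}\res\lh(a_n)$ one immediately reads off that $\msup{p}{a_n,n}\res\lh(a_n)\in G_{\lh(a_n)}$.

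The main obstacle is \rn{4}, namely $\msup{p}{a_n,n}\in G$ in full. Naive domination $q\leq\msup{p}{a_n,n}$ fails: beyond $\sup_n\lh(a_n)$ the third mixture clause forces $q$ to be trivial, witnessed by the very branch $\langle a_m\rangle$, whereas $\msup{p}{a_n,n}$, being a mixture of $\msup{T}{a_n,n}$, need not be. Instead I would show that for each $n$ the set $\{\msup{p}{a,n}\mid a\in T_n\}$ is predense below $q$. I expect to prove this by induction on $n$, refining an arbitrary $s\leq q$ into a condition lying below some $\msup{p}{a,n}$: the coherence clauses of the fusion structure, i.e. $\msup{T}{b,n+1}\hooks\msup{T}{a,n}$ and hence $\msup{p}{b,n+1}\leq\msup{p}{a,n}$ for $b\in\sucm^n_T(a)$, together with Fact \ref{m2.11fact}, allow one to descend the tree one level at a time while staying below the relevant auxiliary mixtures. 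Once predensity is in hand, genericity gives some $a\in T_n$ with $\msup{p}{a,n}\in G$, and the antichain structure of $T$ forces the witness to coincide with the already-constructed $a_n$, since $G$ is compatible with the stem of only one element of each $T_n$. I expect this predensity-and-refinement step, reconciling the mixture $q$ that follows $T$ with the auxiliary mixtures that follow the finer antichains $\msup{T}{a,n}$, to be the genuine technical heart of the argument, with Facts \ref{m3.2fact} and \ref{m2.11fact} doing the real work.
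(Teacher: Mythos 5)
A preliminary remark: the paper does not prove this statement at all --- it is quoted as a Fact from Miyamoto's paper --- so your argument has to be judged on its own merits rather than against a proof in the text. Your opening move is the right one: reading the branch off the generic via the maximal-antichain clause, so that $a_0$ is the unique element of $T_0$ and $a_{n+1}$ is the unique $b\in\sucm^n_T(a_n)$ with $b\res\lh(a_n)\in G_{\lh(a_n)}$. But the verification of (ii) has a genuine gap: your ``coordinatewise induction'' works only at successor coordinates. The mixture clauses (p.i)--(p.iii) constrain the individual coordinates $q(\alpha)$, and (as you correctly use) they let you pass from $a_0\res\alpha\in G_\alpha$ to $a_0\res(\alpha+1)\in G_{\alpha+1}$; they say nothing, however, at a limit coordinate $\lambda\leq\lh(a_0)$, where you must pass from ``$a_0\res\alpha\in G_\alpha$ for all $\alpha<\lambda$'' to ``$a_0\res\lambda\in G_\lambda$''. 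That implication is not a general fact about iterations --- a condition all of whose proper restrictions lie in the restricted generics need not lie in the generic at a limit, and the set of such coordinatewise-generic conditions need not even be a filter --- and for nice limits it is a substantive structural lemma about $\PP_\lambda$ (whose conditions are themselves $(S,\lambda)$-nice for nested antichains $S$), proved by induction on the length of the iteration. This is exactly where the difficulty of limit stages is concentrated, and your proposal treats it as routine bookkeeping. (The unconditional inequality $q\res\lh(a_0)\leq a_0$ you assert in passing is likewise not part of the definition of a mixture; the copying is conditional on restrictions lying in $\dot G$.)

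Your endgame for (iv) also contains an inference in the wrong direction. The fusion structure gives $a\leq\msup{p}{a,n}\res\lh(a)$: the restriction of the mixture is the \emph{weaker} condition. Hence the family $\{\msup{p}{a,n}\mid a\in T_n\}$ need not be pairwise incompatible (incompatibility of the stems $a\perp a'$ does not pass upward to weaker conditions), and $\msup{p}{a,n}\in G$ does not imply $a\in G_{\lh(a)}$; so even granting your predensity claim, genericity produces \emph{some} $a\in T_n$ with $\msup{p}{a,n}\in G$, and you cannot identify it with $a_n$. The repair is to prove predensity below $q$ of the family $\{a^\frown\msup{p}{a,n}\res[\lh(a),\gamma)\mid a\in T_n\}$ instead; these are legitimate conditions extending $\msup{p}{a,n}$ because $a\leq\msup{p}{a,n}\res\lh(a)$, and distinct elements of $T_n$ have incompatible restrictions, so at most one member of this family meets $G$. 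Membership of one member then yields simultaneously $a\in G_{\lh(a)}$, the coherence $a\in\sucm^n_T(a_n)$ with the previously found witness, and $\msup{p}{a,n}\in G$ --- that is, (ii), (iii) and (iv) at once, with no coordinatewise induction and hence no separate limit-stage argument. Proving that predensity, by induction on $\gamma$ with Fact \ref{m2.11fact} supplying the refinements, is the actual content of Miyamoto's lemma. So you have correctly sensed that a predensity argument is the heart of the matter, but the predense set must consist of the stems glued to the tails of the mixtures, not the mixtures alone, and the work you defer to it includes the limit-coordinate problem that your treatment of (ii) overlooks.
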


We mention one more convenient fact:

\begin{fact}[Miyamoto, \cite{miyamotosimple}]\label{niceccfact}
Suppose $\kappa$ is an inaccessible cardinal, $\PP=\langle \PP_\alpha,\Q_\beta\mid\alpha\leq\kappa,\beta<\kappa\rangle$ is a nice iteration so that
\begin{enumerate}[label=$(\roman*)$]
\item $\vert\PP_\alpha\vert<\kappa$ for all $\alpha<\kappa$ and
\item $\PP$ preserves $\omega_1$.
\end{enumerate}
Then $\PP$ is $\kappa$-c.c..
\end{fact}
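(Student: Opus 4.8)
The plan is to first bound the supports of nice conditions, thereby presenting $\PP=\PP_\kappa$ as a direct limit, and then run a reflection argument against a hypothetical antichain of size $\kappa$, with hypothesis $(ii)$ entering precisely to guarantee that the reflected antichain remains predense.

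\emph{Step 1: supports are bounded below $\kappa$.} I would first show by induction on $n<\omega$ that every level $T_n$ of a nested antichain $T$ in $\PP$ has size $<\kappa$. Indeed $T_0$ is a singleton, and if $\vert T_n\vert<\kappa$ then for each $a\in T_n$ the set $\sucm^n_T(a)$ injects into a maximal antichain of $\PP_{\lh(a)}$, so has size $\leq\vert\PP_{\lh(a)}\vert<\kappa$ by hypothesis $(i)$; as $\kappa$ is regular, $T_{n+1}=\bigcup_{a\in T_n}\sucm^n_T(a)$ has size $<\kappa$. Since $\kappa$ is regular and uncountable, $\vert T\vert=\vert\bigcup_{n<\omega}T_n\vert<\kappa$, hence $\beta_T\coloneqq\sup\{\lh(b)\mid b\in T\}<\kappa$. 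Inspecting clauses $(p.i)$--$(p.iii)$, a mixture of $T$ is forced to be trivial at every coordinate ${\geq}\beta_T$, so any $(T,\kappa)$-nice $p\in\PP$ satisfies $p\res\beta_T\in\PP_{\beta_T}$ and is equivalent to $(p\res\beta_T)\cc\1$. Thus every condition has support bounded by some $\beta_p<\kappa$, and, since $\kappa$ is inaccessible, each $\PP_\alpha$ lies in $H_\kappa$ with $\vert\PP_\alpha\vert<\kappa$, so $\PP\cong\bigcup_{\alpha<\kappa}\PP_\alpha$ is a direct limit.

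\emph{Step 2: reflection setup.} Suppose toward a contradiction that there is an antichain of size $\kappa$; enlarging it, fix a maximal antichain $A$ with $\vert A\vert\geq\kappa$. Choose a regular $\theta$ with $\PP,A\in H_\theta$ and build a continuous increasing chain $\langle N_\xi\mid\xi<\kappa\rangle$ of elementary substructures of $H_\theta$ with $\vert N_\xi\vert<\kappa$, $\{\PP,A\}\subseteq N_0$, $N_\xi\cap\kappa\in\kappa$, and $\langle N_\eta\mid\eta\leq\xi\rangle\in N_{\xi+1}$. As $\kappa$ is inaccessible, $C\coloneqq\{\delta<\kappa\mid N_\delta\cap\kappa=\delta\}$ is club in $\kappa$; for $\delta\in C$ every $q\in A\cap N_\delta$ has $\beta_q\in N_\delta\cap\kappa=\delta$, so $A\cap N_\delta\subseteq\PP_\delta$.

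\emph{Step 3: catching a condition.} For $\delta\in C$ set $A_\delta\coloneqq\{q\in A\mid\beta_q<\delta\}\subseteq\PP_\delta$. The heart of the proof is the claim that for club-many $\delta\in C$ the set $A_\delta$ is predense in $\PP_\delta$. Granting this, choose $p^\ast\in A$ with $\beta_{p^\ast}\geq\delta$, which exists because $\vert\PP_\delta\vert<\kappa\leq\vert A\vert$ forbids all of $A$ from having support below $\delta$. Then $p^\ast\res\delta\in\PP_\delta$ by Step 1, and predensity yields $q\in A_\delta$ together with $r\leq_{\PP_\delta}p^\ast\res\delta,q$. Since $q=(q\res\delta)\cc\1$ has trivial tail, the pair $(r,\,p^\ast\res[\delta,\kappa))$ is, under $\PP\cong\PP_\delta\ast\dotPP_{\delta,\kappa}$, a common extension of $p^\ast$ and $q$; and $q\neq p^\ast$ since $\beta_q<\delta\leq\beta_{p^\ast}$, contradicting that $A$ is an antichain.

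\emph{Main obstacle.} The hard part is the predensity of $A_\delta$ in $\PP_\delta$, and this is exactly where hypothesis $(ii)$ is indispensable: elementarity alone controls only conditions inside $N_\delta$, whereas $p^\ast\res\delta$ need not be in $N_\delta$. To obtain genuine predensity, fix $s\in\PP_\delta$ and a countable $X\prec H_\theta$ with $s,\PP,A\in X$ and $\sup(X\cap\kappa)<\delta$; one constructs below $s$ a descending $\omega$-sequence of conditions that repeatedly meets the dense sets reflected by $X$, organized as a nested antichain with an associated fusion structure. Facts \ref{m3.2fact} and \ref{m2.11fact} supply the mixtures and the threading that keep the construction inside $\PP_{\sup(X\cap\kappa)}$, and Fact \ref{m3.5fact} produces a fusion $s'\leq s$. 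The construction runs through $\omega$ stages and must close off at $\delta^X=X\cap\omega_1$; preservation of $\omega_1$ guarantees that this countable ordinal is genuinely reached, so that $s'$ forces the generic filter to contain a member of $A\cap X\subseteq A_\delta$, whence $s$ is compatible with an element of $A_\delta$. The elementary chain of Step 2 is what produces club-many $\delta$ admitting such $X$ with $\sup(X\cap\kappa)<\delta$, and inaccessibility of $\kappa$ is used both to keep all models of size $<\kappa$ and to make $C$ club.
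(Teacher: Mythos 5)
The paper never proves this statement: it is quoted as a Fact from Miyamoto, with only the remark that his proof for simple iterations of semiproper forcings adapts to nice iterations and to the weaker hypothesis that $\PP$ preserves $\omega_1$. So your proposal has to stand on its own. Its outer layers do: Step 1 is correct (bounding each level $T_n$ via hypothesis (i) and the injectivity of $b\mapsto b\res\lh(a)$ on $\sucm_T^n(a)$, hence bounding supports at $\kappa$ -- though note this argument works only at $\kappa$ itself, where $\vert\PP_\alpha\vert<\kappa=\cof(\kappa)$ for all $\alpha<\kappa$; at $\delta<\kappa$ it fails because $\vert\PP_\alpha\vert$ may exceed $\cof(\delta)$, which is exactly why $\PP_\delta$ need not be a direct limit and why the Fact has any content). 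Step 2 and the reduction in Step 3 (predensity of $A_\delta$ in $\PP_\delta$ plus the amalgamation $r^\frown p^{\ast}\res[\delta,\kappa)$ give the contradiction) are the standard reflection skeleton and are fine. But the predensity claim \emph{is} the theorem, and your argument for it breaks down.

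Two concrete problems. First, the setup of your ``Main obstacle'' paragraph is contradictory: you ask for a countable $X\prec H_\theta$ with $s\in X$ and $\sup(X\cap\kappa)<\delta$. Since $s\in\PP_\delta$ is a sequence of length $\delta$, the ordinal $\delta=\lh(s)$ is definable from $s$, so $s\in X$ forces $\delta\in X\cap\kappa$ and hence $\sup(X\cap\kappa)>\delta$. If you instead drop the requirement $\sup(X\cap\kappa)<\delta$, then elementarity alone hands you some $a\in A\cap X$ compatible with $s$ (no fusion needed), but nothing makes $\beta_a<\delta$, i.e.\ $a\in A_\delta$ -- that is precisely the circularity the claim must overcome, and the hard case is exactly an $s$ whose nested antichain has lengths cofinal in $\delta$, so that $s$ has no bounded representative. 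Second, the engine you invoke -- a fusion $s'\leq s$ forcing $\dot G$ to meet the countable set $A\cap X$ -- is a properness-style genericity condition. Hypothesis (ii) is a statement about the generic extensions of the single forcing $\PP$; it confers no regularity on the iterands $\Q_\alpha$ (which may be club shootings, Namba forcing, etc.), and fusion constructions that meet the dense sets of a countable model are exactly what (semi)properness of the iterands would be needed for. Relatedly, your gloss that ``preservation of $\omega_1$ guarantees that this countable ordinal is genuinely reached'' is not a meaningful statement about a length-$\omega$ construction carried out in $V$. The actual role of $\omega_1$-preservation in arguments of this kind is semantic, inside $V[G]$: at a stage $\delta$ with $\cof^V(\delta)=\omega_1$, since $\omega_1^V$ stays regular in $V[G]$, the supremum of the lengths of the generic branch (as in Fact \ref{m3.5fact}) through the nested antichain of any $q\in G\cap\PP_\delta$ remains below $\delta$, so that by clause $(p.iii)$ every condition of $\PP_\delta$ is generically identified with a bounded condition; it is this ``generically a direct limit'' phenomenon, not a fusion against a countable model, that lets maximal antichains be caught below such $\delta$. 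Your sketch never articulates this mechanism, and without it the predensity claim -- and with it the proof -- is unsupported.
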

Miyamoto proves this for so called simple iterations of semiproper forcings. The proof works just as well for nice iterations of semiproper forcings and finally the proof can be made to work with assuming only $\PP$ preserves $\omega_1$ instead of $\PP$ being a semiproper iteration.

\section{The Iteration Theorem}\label{qmmsection}

The full main theorem we are going to prove is the following.
\begin{thm}\label{iterationtheorem}
    Suppose $f$ witnesses $\diab$ and $\PP=\standarditeration$ is a nice iteration of $f$-preserving forcings. Suppose that 
    \begin{enumerate}[label=$(\PP.\roman*)$]
        \item\label{itm:srprestriction} $\forces_{\PP_{\alpha+2}}\SRP$ for all $\alpha+2\leq\gamma$ and
        \item\label{itm:harshrestriction} $\forces_{\PP_\alpha}``\Q_{\alpha}\text{ preserves }f\text{-stationary sets from } \bigcup_{\beta<\alpha}V[\dot G_\beta]$.
    \end{enumerate}
    Then $\PP$ preserves $f$.
\end{thm}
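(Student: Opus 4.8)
By Corollary~\ref{preservefcor}, to conclude that $\PP$ preserves $f$ it is enough to show that each $S^f_b$ ($b\in\BP$) remains $f$-stationary in $V^\PP$: by Proposition~\ref{diabequivprop} this is precisely the assertion that $f$ witnesses $\diab$ in $V^\PP$. I would obtain this from a \emph{generic condition lemma}, proved by induction on $\gamma$: for sufficiently large regular $\Theta$, whenever $Y\prec H_\Theta$ is countable with $\PP,f,\theta$ and all relevant names in $Y$, with $Y\cap\Htheta$ being $\fgood$, and $p\in\PP\cap Y$, there is a condition $q\leq p$ which is \emph{$(Y,f)$-generic}, i.e.\ $q$ forces $\delta^{Y[\dot G]}=\delta^Y$ and $Y[\dot G]\cap\Htheta$ to be $\fgood$. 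Granting the lemma, fix $b$ and a $\PP$-name $\dot{\mathcal C}$ for a club of $[\Htheta]^\omega$; since $S^f_b$ is $f$-stationary in $V$ one reflects all parameters into a countable $Y\prec H_\Theta$ with $Y\cap\Htheta$ being $\fgood$ and $\delta^Y\in S^f_b$, and the lemma yields $q\leq p$ forcing $Y[\dot G]\cap\Htheta$ to be an $\fgood$ element of $\dot{\mathcal C}$ with $\delta^Y\in S^f_b$, which is what the equivalent formulation of $f$-stationarity in Proposition~\ref{fstationaryequivformulationprop} demands.

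For the lemma, the base case is trivial and the successor case $\gamma=\delta+1$ composes a $(Y,f)$-generic condition for $\PP_\delta$, supplied by the inductive hypothesis, with a one-step $(Y[\dot G_\delta],f)$-generic condition for $\Q_\delta$; the existence of the latter is the delicate point and is discussed below. The heart of the matter is the limit case $\PP=\nicelim(\vec\PP)$, which I would treat by a Miyamoto fusion carried out inside $Y$. One builds a fusion structure $T,\langle p^{(a,n)},T^{(a,n)}\mid n<\omega,\ a\in T_n\rangle$ whose nodes successively meet, cofinally in $\gamma$, the dense sets, club-generators of $\dot{\mathcal C}$, and genericity requirements for $f(\delta^Y)$ that $Y$ names; Facts~\ref{m2.11fact} and \ref{m3.2fact} provide the extensions of nested antichains and their mixtures at each node, while Fact~\ref{m3.5fact} ensures that a fusion $q$ meets enough of the structure along the generic branch $\langle a_n\mid n<\omega\rangle$ to guarantee $(Y,f)$-genericity. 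Using a tree of conditions rather than a single $\omega$-sequence is essential, precisely as in Miyamoto's treatment, because $\gamma$ may acquire countable cofinality along the iteration; the collapse embeddings are lifted as the reflected model thickens by Proposition~\ref{canonicalliftprop}, and the one-step extensions at the crossed coordinates are furnished by the inductive hypothesis applied to the proper initial segments $\PP_\alpha$ ($\alpha<\gamma$).

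The main obstacle, present already in the successor case, is the manufacture of the one-step generic conditions that the fusion consumes at each node. The hypotheses grant only that each $\Q_\alpha$ is $f$-preserving and, by \ref{itm:harshrestriction}, preserves the $f$-stationarity of old sets; both are ``stationarily many good models'' statements, whereas the fusion needs a generic condition for the \emph{specific} reflected model $Y$, an $f$-semiproperness-type property that $\omega_1$-preserving forcings need not enjoy. This is exactly the role of \ref{itm:srprestriction}: I expect $\SRP$, available in $V[\dot G_{\alpha+2}]$, to upgrade preservation of $f$-stationary sets to the required one-step generic conditions, via an $f$-relativized form of the classical fact that under $\SRP$ every stationary-set-preserving forcing is semiproper; concretely the continuous increasing $\omega_1$-chain of countable elementary substructures in $\mathcal S\cup\mathcal S^\perp$ guaranteed by \ref{SRPcond1}--\ref{SRPcond5} should certify that the generic branch cannot expel $\delta^Y$ from the target set or collapse it.

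The genuinely delicate feature, and the point I expect to require the most care, is the two-step delay encoded in \ref{itm:srprestriction}: $\SRP$ becomes available only two stages past the coordinate whose regularity one wishes to witness, so the bookkeeping governing the fusion must look ahead, witnessing the regularity of the extension at $\alpha$ through the copy of $\SRP$ that first appears in $V[\dot G_{\alpha+2}]$, rather than from any instance of $\SRP$ at or below $\alpha$ (which in general is unavailable, e.g.\ when $\alpha$ is a limit). Making this alignment precise---so that each threat, namely a name for a club potentially avoiding $\delta^Y$, is neutralised using an instance of $\SRP$ that provably holds by the time the corresponding node is processed---together with verifying that the $\omega_1$-chains produced by $\SRP$ mesh with the $\fgood$ models threaded through the fusion, is what I regard as the technical core of the theorem.
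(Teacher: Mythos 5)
There is a genuine gap, and it sits at the exact point your proposal is built on. Your ``generic condition lemma'' asserts that \emph{every} countable $Y\prec H_\Theta$ (containing the parameters, with $Y\cap\Htheta$ being $\fgood$) admits a $(Y,\PP,f)$-semigeneric condition below every $p\in\PP\cap Y$ --- in the paper's terminology, that $\PP$ and all its iterands are $f$-semiproper. This is false under the hypotheses of the theorem. Hypothesis \ref{itm:harshrestriction} is vacuous at $\alpha=0$ (and at stage $\alpha$ it only protects sets from $\bigcup_{\beta<\alpha}V[\dot G_\beta]$), so $\QQ_0$ may force some $f$-stationary $S\in V$ into $\NSf$; with the trivial witness $f$, take $\QQ_0=\CS(\omega_1\setminus S)$. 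If $\delta^Y\in S$ and $p(0)$ forces $\check S\in\NSf$, then no $(Y,\PP,f)$-semigeneric $q\leq p$ can exist: such a $q$ forces $\check Y[\dot G]$ to be $\fgood$, i.e.\ to respect $\NSf^{V[\dot G]}$, while $S\in Y[\dot G]\cap\NSf^{V[\dot G]}$ and $\delta^{Y[\dot G]}=\delta^Y\in S$. (This is precisely the observation in the paper that alternatives \ref{namedichotomyopt1} and \ref{namedichotomyopt2} of Definition \ref{namedichotomydef} are mutually exclusive.) Since $S$ is $f$-stationary in $V$, there are ``many'' such $Y$, so your lemma fails outright and the induction never gets started. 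Relatedly, $\SRP$ cannot rescue it in the way you suggest: the $f$-relativized fact you invoke (under $\SRP$, $f$-stationary-set-preserving forcings are $f$-semiproper) does not apply, because the iterands here are \emph{not} $f$-stationary-set-preserving --- they are allowed to kill new $f$-stationary sets, and that generality is the entire point of the theorem.

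What the paper does instead, and what your proposal is missing, is a relativization of the whole argument to models that \emph{respect} the relevant partial-evaluation ideals. One fixes $I=I^{\QQ_0}_{p(0)}=\{S\mid p(0)\forces\check S\in\NSf\}$ and only seeks $(X,\PP,f)$-semigeneric conditions for $X$ respecting $I$; Lemma \ref{fpreservinglemm} shows that having them on a projective $I$-positive set of models (supplied by Proposition \ref{trueprojectiveprop}) already implies preservation of $f$. The Miyamoto fusion you sketch is indeed the paper's skeleton, but it must carry extra bookkeeping: at each node $a$ one maintains a name $\dot I^{(a,n)}$ for the ideal $I^{\Q_{\lh(a)}}_{p^{(a,n)}(\lh(a))}$ of sets killed by the \emph{next} iterand at the current condition, and the name $\dot X^{(a,n)}$ for the current model is required to respect it. Two ingredients make this maintainable, and both are absent from your proposal. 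First, $\SRP$ enters through Lemma \ref{sledgehammerlemm} not as $f$-semiproperness but as \emph{respectfulness} of all $\omega_1$-preserving forcings (in particular of the tail forcings $\PP_{\alpha,\beta}$, which are only known to be $f$-preserving by induction): respectfulness is a dichotomy that produces a semigeneric condition forcing $\check X[\dot G]$ to respect $\dot I$ exactly when there is no obvious obstruction, i.e.\ when $X$ respects $\dot I^{p}$. Second --- and this is the step your reading of \ref{itm:harshrestriction} as a ``stationarily many good models statement'' misses entirely --- preservation of old $f$-stationary sets is used to prove the identity $\dot I^{s\res\lh(d)}=I^{\PP_{\lh(a),\lh(d)+1}}_{s\res\lh(d)+1}=I^{\QQ}_{s(\lh(a))}$ (the Subclaim in the paper's proof): because later iterands never kill $f$-stationary sets from earlier models, respecting the one-step ideal at a node is the same as respecting the ideal of everything the whole tail kills, which is what allows the respecting-condition to be propagated from node to node. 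Without the respecting machinery and this ideal identity, the fusion cannot be executed, however the $\SRP$ stages are aligned.
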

Note that if $\BB$ is the trivial forcing $\{\1\}$ and we take $f$ to be the witness of $\diaba$ with $f(\alpha)=\{\1\}$ for all $0<\alpha<\omega_1$, then we recover the special case mentioned in the introduction.

So what is the basic idea? For the moment, let us assume that $f$ is the trivial witness of $\diamondsuit(\{\1\})$ above for simplicity. As always, we want to imitate the argument of the mother of all iteration theorems, the iteration theorem for proper forcings.
Suppose we have a full support iteration 
$$\PP=\langle \PP_n, \Q_m\mid n\leq\omega, m<\omega\rangle$$
and for the moment assume only that 
$$\forces_{\PP_n}``\Q_n\text{ preserves }\omega_1".$$
 We try to motivate some additional reasonable constraints imply $\PP$ to be $\omega_1$-preserving. For the moment, we try to consider Shelah's argument as a game: In the beginning there some countable $X\prec H_\theta$ as well as $p_0\in X\cap \PP$.
The argument proceeds as follows: In round $n$, we have already constructed a $(X, \PP_{n})$-semigeneric condition $q_n\in\PP\res n$ and have 
$$q_n\forces \dot p_{n}\res n\in\dot G_{n}\cap \check X[\dot G_n].$$
Next, our adversary hits us with a dense subset $D\subseteq\PP$ in $X$ and we must find  $\dot p_{n+1}\in V^{\PP_n}$ and some $(X, \PP_{n+1})$-semigeneric $q_{n+1}$ with $q_{n+1}\res n=q_n$ and\footnote{Here, we consider $\dot p_n$ also as a $\PP_{n+1}$-name.}
$$q_{n+1}\forces \dot p_{n+1}\in\check D\wedge p_{n+1}\res n+1\in \dot G_{n+1}\cap \check X[\dot G_{n+1}].$$
 Our job is to survive this game for $\omega$-many steps. If we have a winning strategy then we can find a $(X, \PP)$-semigeneric condition, so in particular $\PP$ preserves $\omega_1$.

Destroying stationarity makes it significantly more difficult to survive the above game: Suppose for example that 
$$p_0(0)\forces\check S\in\NS$$
for some $S\in X$ with $\delta^X\in S$. Then there is no hope of finding a $(X, \PP_{1})$-semigeneric condition $q$ with $q\leq p_0\res 1$. 
Hence, we must already be careful with what $X$ we start the game. This leads us to the following definitions.

\begin{defn}
Suppose $\theta$ is sufficiently large and regular, $X\prec H_\theta$ is countable. If $I$ is an ideal on $\omega_1$, we say that $X$ \textit{respects} $I$\index{X respects I@$X$ respects $I$} if for all $A\in I\cap X$ we have $\delta^X\notin A$.
\end{defn}

Note that all countable $X\prec H_\theta$ respect $\NS$ and countable $Y\prec H_\theta$ with $f\in Y$ respects $\NSf$ if and only if $Y$ is $\fgood$.

\begin{defn}
Suppose $\PP$ is a forcing and $\dot I\in V^\PP$ is a name for an ideal on $\omega_1$. For $p$ in $\PP$, we denote the \textit{partial evaluation of} $\dot I$ \textit{by} $p$\index{Partial Evaluation} by
$$\dot I^p\coloneqq \{S\subseteq\omega_1\mid p\forces\check S\in\dot I\}.$$
\end{defn}

Back to the discussion, we need to start with an $X$ so that $X$ respects $\dot I^{p_0\res 1}$ where $\dot I$ is a name for the nonstationary ideal. This gives us a shot at getting past the first round. Luckily, there are enough of these $X$.

\begin{defn}
Let $A$ be an uncountable set with $\omega_1\subseteq A$ and $I$ a normal uniform ideal on $\omega_1$. Then $\mathcal S\subseteq [A]^\omega$ is \textit{projective} $I$\textit{-positive}\index{[A]omega@$[A]^\omega$!Projective I positive@projective $I$-positive in} if for any $S\in I^+$ the set 
$$\{X\in\mathcal S\mid \delta^X\in S\}$$
is stationary in $[A]^\omega$.
\end{defn}

\begin{prop}\label{trueprojectiveprop}
Suppose $\theta$ is sufficiently large and regular. Let $I$ be a normal uniform ideal on $\omega_1$. Then 
$$\mathcal S=\{X\in[H_\theta]^\omega\mid X\prec H_\theta\text{ respects }I\}$$
is projective $I$-positive.
\end{prop}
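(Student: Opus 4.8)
The plan is to unwind the definition of ``projective $I$-positive'': given an arbitrary $S \in I^+$ and an arbitrary club $\mathcal C \subseteq [H_\theta]^\omega$, I must produce a countable $X \prec H_\theta$ with $X \in \mathcal C$, $\delta^X \in S$, and $X$ respecting $I$. I would begin with two standard reductions. Since the club filter on $[H_\theta]^\omega$ is generated by closure points of functions, fix $F\colon [H_\theta]^{<\omega}\to H_\theta$ with $C_F := \{X : X\text{ is closed under }F\}\subseteq\mathcal C$, and by folding the Skolem functions of the structure $\mathfrak A = (H_\theta,\in,<_\theta, I, S)$ into $F$ (where $<_\theta$ is a fixed wellorder) I may assume every $X\in C_F$ is an elementary substructure of $\mathfrak A$. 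Next I would record that a normal uniform ideal extends $\NS$: for a club $C\subseteq\omega_1$, the regressive function $\xi\mapsto\sup(C\cap\xi)$ on $\omega_1\setminus C$ must, by normality, be constant on an $I$-positive set, which would then be bounded, contradicting uniformity; hence $\omega_1\setminus C\in I$. Thus clubs are $I$-large, and in particular $S\cap E\in I^+$ for every club $E\subseteq\omega_1$.

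With these in hand I would set up a pressing-down argument. Build a continuous $\subseteq$-increasing chain $\langle X_\alpha\mid\alpha<\omega_1\rangle$ of countable elementary substructures of $\mathfrak A$ (each closed under $F$) with each $\delta_\alpha := \delta^{X_\alpha}$ an ordinal. Then $Y := \bigcup_{\alpha<\omega_1} X_\alpha$ has size $\omega_1$, so $I\cap Y$ has size ${\leq}\omega_1$ and can be enumerated as $\langle A_\nu\mid\nu<\omega_1\rangle$. The two continuous increasing filtrations $\alpha\mapsto I\cap X_\alpha$ and $\alpha\mapsto\{A_\nu\mid\nu<\alpha\}$ of the single set $I\cap Y$ into countable pieces agree on a club; intersecting this with the club of fixed points $\{\alpha\mid\delta_\alpha=\alpha\}$ yields a club $E$ on which simultaneously $\delta^{X_\alpha}=\alpha$ and $I\cap X_\alpha=\{A_\nu\mid\nu<\alpha\}$.

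The heart of the proof is then a contradiction. Suppose no $X_\alpha$ with $\alpha\in S\cap E$ respects $I$. For each such $\alpha$ there is some $A\in I\cap X_\alpha$ with $\alpha\in A$; since $I\cap X_\alpha=\{A_\nu\mid\nu<\alpha\}$, this $A$ equals some $A_\nu$ with $\nu<\alpha$, so $\alpha\in\nabla_\nu A_\nu := \{\eta\mid\exists\nu<\eta\ \eta\in A_\nu\}$. Hence $S\cap E\subseteq\nabla_\nu A_\nu$. But each $A_\nu\in I$, so normality (closure under diagonal unions) gives $\nabla_\nu A_\nu\in I$, contradicting $S\cap E\in I^+$. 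Therefore some $\alpha\in S\cap E$ gives an $X_\alpha$ respecting $I$ with $\delta^{X_\alpha}=\alpha\in S$ and $X_\alpha\in C_F\subseteq\mathcal C$, which is exactly the desired witness; as $\mathcal C$ was arbitrary, $\{X\in\mathcal S\mid\delta^X\in S\}$ is stationary.

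I expect the only real obstacle to be the bookkeeping that makes the pressing-down work: one must index $I\cap Y$ by $\omega_1$ in a manner synchronized with the chain, so that membership $A\in I\cap X_\alpha$ forces the index strictly below $\alpha$ and a failure of respect becomes a genuine diagonal-union membership. The agreement-on-a-club of the two filtrations is precisely the device that supplies this synchronization without circularly feeding the enumeration back into the elementary substructures. Everything else — the reduction to $C_F$, the inclusion $\NS\subseteq I$, and the final invocation of normality — is routine.
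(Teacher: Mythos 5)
Your proof is correct and follows essentially the same route as the paper's: build a continuous $\omega_1$-chain of substructures from the given club, synchronize an enumeration of $I$ restricted to the union with the chain on a club of indices, and use normality (via a diagonal union) plus the fact that normal uniform ideals extend $\NS$ to find an index in $S$ where the substructure respects $I$. The only differences are presentational — you argue by contradiction where the paper argues directly, and you spell out two steps the paper leaves implicit (that clubs are $I$-large, and the final intersection with $S$) — which if anything makes your write-up more complete.
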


\begin{proof}
Let $\mathcal C$ be a club in $[H_\theta]^\omega$ and assume that all elements of $\mathcal C$ are elementary substructures of $H_\theta$ and contain $I$ as an element. Let 
$$\vec X\coloneqq \langle X_\alpha\mid\alpha<\omega_1\rangle$$
be a continuous increasing chain of elements in $\mathcal C$. Let $X\coloneqq\bigcup_{\alpha<\omega_1} X_\alpha$ and let 
$$\vec A\coloneqq \langle A_\alpha\mid\alpha<\omega_1\rangle$$
be an enumeration of $X\cap I$. Let $C\subseteq\omega_1$ be the set of $\alpha$ so that 
\begin{enumerate}[label=$(C.\roman*)$]
\item $\delta^{X_\alpha}=\alpha$ and
\item $\vec A\res\alpha$ is an enumeration of $X_\alpha\cap I$
\end{enumerate}
and note that $C$ is club. Let $A=\bigtriangledown_{\alpha<\omega_1} I_\alpha$. As $I$ is normal, $A\in I$. Then $C- A$ is a complement of a set in $I$ and for any $\alpha\in C-A$ we have
$$\delta^{X_\alpha}=\alpha\notin I_\beta$$
for all $\beta<\alpha$. Hence $X_\alpha\in\mathcal S\cap\mathcal C$.
\end{proof}

 Of course, the problem continues. What if we have found a suitable $q_1$ and now we work in $V[G_{1}]$ with $q_1\in G_{1}$. At the very least, we need that $X[G_{1}]$ respects $\dot I^{p_0\res [1,2)}$, where $\dot I$ is now a $\PP_{1, 2}$-name for the nonstationary ideal. Ensuring this is a matter of being able to pick the right $q_1$ to begin with. This motivates the following class of forcings.

\begin{defn}\label{namedichotomydef}
We say that a forcing $\PP$ is \textit{respectful}\index{Forcing!respectful} if $\PP$ preserves $\omega_1$ and the following is true: Whenever
\begin{itemize}
 \item $\theta$ is sufficiently large and regular,
 \item $X\prec H_\theta$ is countable with $\PP\in X$,
 \item $\dot I\in X$ is a $\PP$-name for a normal uniform ideal and
 \item $p\in\PP\cap X$
 \end{itemize}  
 then exactly one of the following holds:
 
\begin{enumerate}[label=$(\mathrm{Res}.\roman*)$]
\item\label{namedichotomyopt1} Either there is some $(X, \PP)$-semigeneric $q\leq p$ so that 
$$q\forces`` \check X[\dot G] \text{ respects }\dot I"$$
or
\item\label{namedichotomyopt2} $X$ does not respect $\dot I^p$.
\end{enumerate}
\end{defn}

Roughly, this condition states that we can find a $\PP$-generic filter $G$ with $p\in G$ so that $X\sqsubseteq X[G]$ respects $\dot I^G$ as long as there is no obvious obstruction to it.

\begin{rem}
If $\PP$ is respectful and preserves stationary sets then $\PP$ is semiproper. However, the converse is not true in general. Similarly, a respectful $f$-stationary set preserving forcing is $f$-semiproper, which follows from plugging in a name for $\NSf$ as $\dot I$ in the definition of respectfulness.
\end{rem}

We require\footnote{This excludes the first counterexample due to Shelah, but not yet all the counterexamples of the second kind.} now that
$$\forces_{\PP_n}``\Q_n\text{ is respectful}"$$
for all $n<\omega$. We then aim to make sure (assuming $\dot p_{n+1}$ is already defined) to find $q_{n+1}$ in round $n$ so that in addition to the prior constraints,
$$q_{n+1}\forces ``\check X[\dot G_{n+1}]\text{ respects } \dot I"$$
where $\dot I$ is a $\PP_{n+1}$ name for the ideal of sets forced to be nonstationary by $\dot p_{n+1}(n+1)$. Consider $\dot I$ as a $\PP_n$-name $\ddot I$ for a $\Q_n$-name. By respectfulness, this reduces to avoiding an instance of the ``bad case" \ref{namedichotomyopt2}, namely we should make sure that whenever $G_n$ is $\PP_n$-generic with $q_n\in G_n$ then 
$$X[G_n]\text{ respects }\left(\ddot I^{G_n}\right)^{p_{n+1}(n+1)}$$
where $p_{n+1}=\dot p_{n+1}^{G_{n+1}}$.
he next key insight is that this reduces to 
$$``X[G_n]\text{ respects }J\coloneqq\{S\subseteq\omega_1\mid p_{n+1}(n)\forces \check S\in\NS\}"$$
which we have (almost)\footnote{We made sure of this if $p_{n+1}$ is replaced by $p_n$ in the definition of $J$, we ignore this small issue for now.} already justified inductively, assuming $\Q_{n+1}$ only kills new stationary sets: Our final requirement\footnote{It is readily seen that this eliminates the counterexamples of the second kind.} is that 
$$\forces_{\PP_{n+1}}``\Q_{n+1}\text{ preserves stationary sets which are in }V[\dot G_n]"$$
for all $n<\omega$. The point is that trivially $\left(\ddot I^{G_n}\right)^{p_{n+1}(n)}$ only contains sets in $V[G_n]$, so all such sets will be preserved by $\Q_{n+1}$. The sets that are killed are then already killed in the extension by $\Q_n^{G_n}$.\\
Modulo some details we have shown the following.
\begin{thm}\label{omegakilliterationthm}
Suppose $\PP=\langle \PP_n, \Q_m\mid n\leq\omega, m<\omega\rangle$ is a full support iteration so that
\begin{enumerate}[label=$(\PP.\roman*)$]
    \item $\forces_{\PP_n}\Q_n\text{ is respectful}$ and
    \item\label{omegakilliterationcond2} $\forces_{\PP_{n+1}}\Q_{n+1}\text{ preserves stationary sets which are in }V[\dot G_n]"$
\end{enumerate}
for all $n<\omega$. Then $\PP$ does not collapse $\omega_1$.
\end{thm}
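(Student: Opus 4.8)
Let me sketch the proof.

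<br>

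The plan is to formalize the game-theoretic discussion immediately preceding the statement into a genuine fusion-style construction. The goal is to show that $\PP=\PP_\omega$ preserves $\omega_1$, and by the standard characterization this reduces to showing that for a club of countable $X\prec H_\theta$ (with $\PP\in X$) and every $p\in\PP\cap X$, there is an $(X,\PP)$-semigeneric $q\leq p$. So first I would fix $\theta$ sufficiently large and regular, fix a countable $X\prec H_\theta$ containing $\PP$ and all the relevant names, and fix $p_0\in\PP\cap X$. By Proposition \ref{trueprojectiveprop} applied to $\NS$ we may assume $X$ respects the name for the nonstationary ideal partially evaluated by $p_0\res 1$; this is the launching condition needed to survive round $0$.

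<br>

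Next I would run the inductive construction hinted at in the discussion. I build sequences $\langle q_n\mid n<\omega\rangle$ and $\langle \dot p_n\mid n<\omega\rangle$ where $q_n\in\PP_n$ is $(X,\PP_n)$-semigeneric, $q_{n+1}\res n = q_n$, and $q_n\forces \dot p_n\res n\in\dot G_n\cap\check X[\dot G_n]$. At round $n$ the adversary presents a dense $D\in X$, and using respectfulness of $\Q_n$ (condition $(\PP.\mathrm{i})$) in $V^{\PP_n}$ I locate $\dot p_{n+1}$ and extend $q_n$ to $q_{n+1}$ meeting $D$ while forcing $\check X[\dot G_{n+1}]$ to respect the name $\dot I$ for the ideal of sets that $\dot p_{n+1}(n+1)$ forces nonstationary. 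The crux is to verify the \emph{respectfulness dichotomy} always lands in the good case \ref{namedichotomyopt1} rather than the bad case \ref{namedichotomyopt2}: this amounts to checking that $X[G_n]$ respects the evaluated ideal $J=\{S\mid p_{n+1}(n)\forces\check S\in\NS\}$. Here condition $(\PP.\mathrm{ii})$, that $\Q_{n+1}$ preserves stationary sets lying in $V[\dot G_n]$, is exactly what guarantees that any set killed at stage $n+1$ was not in $V[G_n]$, so no ``old" stationary set containing $\delta^X$ gets destroyed and the obstruction of case \ref{namedichotomyopt2} cannot arise.

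<br>

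Finally, since $\PP$ is a full support (equivalently nice, for length $\omega$) iteration, the coherent tower $\langle q_n\mid n<\omega\rangle$ assembles into a single condition $q=\bigcup_n q_n\in\PP_\omega$, and a routine check using the semigenericity at each finite stage shows $q$ is $(X,\PP)$-semigeneric, i.e. $q\forces\check X\cap\omega_1=\check X[\dot G]\cap\omega_1$. This yields semigenericity for densely many $X$, hence $\PP$ preserves $\omega_1$. I expect the main obstacle to be the bookkeeping that keeps the two respectfulness requirements synchronized across rounds, in particular correctly threading the name $\dot I$ through the decomposition as a $\PP_n$-name $\ddot I$ for a $\Q_n$-name and confirming that the partial evaluation $(\ddot I^{G_n})^{p_{n+1}(n)}$ really only involves sets from $V[G_n]$; this is the point the author flags with a footnote about replacing $p_{n+1}$ by $p_n$, and handling that small discrepancy carefully is where the real work lies.
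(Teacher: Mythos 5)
Your outline reproduces the paper's own informal discussion (the paper asserts this theorem only ``modulo some details''), and most ingredients are placed correctly: Proposition \ref{trueprojectiveprop} to pick the starting $X$, the round-by-round appeal to the respectfulness dichotomy, the role of hypothesis \ref{omegakilliterationcond2}, and the full-support assembly at the end. But the issue you defer in your last sentence is a genuine gap, not bookkeeping, and your claim that hypothesis \ref{omegakilliterationcond2} by itself makes the bad case \ref{namedichotomyopt2} impossible is false as stated. Your induction can only maintain that $X[G_n]$ respects the ideal attached to the \emph{current} condition, namely $\{S\in V[G_n]\mid p_n(n)\forces_{\Q_n}\check S\in\NS\}$; but to apply the dichotomy below the refined condition $p_{n+1}\leq p_n$ meeting the adversary's dense set, you need $X[G_n]$ to respect the \emph{larger} ideal $\{S\in V[G_n]\mid p_{n+1}(n)\forces_{\Q_n}\check S\in\NS\}$. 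Hypothesis \ref{omegakilliterationcond2} cannot bridge this: it constrains what $\Q_{n+1}$ does to sets of $V[G_n]$, and that is exactly what identifies the partial evaluation of the tail ideal (which by definition contains only $V[G_n]$-sets) with the one-step ideal at coordinate $n$ --- the half of the argument you do get right. The two coordinate-$n$ ideals above, however, differ by sets of $V[G_n]$ that $\Q_n$ \emph{itself} kills below $p_{n+1}(n)$ but not below $p_n(n)$, and nothing in the hypotheses forbids $\Q_n$ from killing stationary sets of its own ground model $V[G_n]$ (it must only preserve those of earlier models). So with the side structure frozen at $X[G_n]$, the obstruction of \ref{namedichotomyopt2} can genuinely arise at the first round in which the dense set forces a proper extension at coordinate $n$, which is the typical case.

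The missing idea --- supplied in the paper by Lemma \ref{technicalQlemm}, which is invoked at every node of the fusion construction in the proof of Theorem \ref{iterationtheorem} --- is that the side structure must be \emph{enlarged} each time the condition is refined: using respectfulness of $\Q_n$ a second time, one chooses the refined condition $p_{n+1}$ hitting (the coordinate-$n$ projection of) the dense set \emph{simultaneously} with a countable $Y$ satisfying $X[G_n]\sqsubseteq Y\prec H_\theta^{V[G_n]}$ and respecting the ideal attached to $p_{n+1}(n)$, and the induction continues with $Y$ in place of $X[G_n]$. The tower of structures is therefore not $\langle X[G_n]\mid n<\omega\rangle$ but an interleaved chain $X\sqsubseteq Y_0\sqsubseteq Y_0[G_{0,1}]\sqsubseteq Y_1\sqsubseteq Y_1[G_{1,2}]\sqsubseteq Y_2\sqsubseteq\cdots$, where each enlargement happens inside the corresponding intermediate model; semigenericity of the limit condition $q$ then follows because $X[G]\cap\omega_1\subseteq\bigcup_{n<\omega}Y_n[G_{n,\omega}]\cap\omega_1=\delta^X$, the last equality being the point of the dense-set bookkeeping. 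This is precisely the content of the paper's footnoted ``small issue'' about replacing $p_{n+1}$ by $p_n$; acknowledging it without the enlargement mechanism leaves the induction unable to close.
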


Two issues arise when generalizing this to longer iterations. The first issue is the old problem that new relevant indices may appear along the iteration in the argument, which we deal with by using nice supports. The second problem is that it seemingly no longer suffices that each iterand individually is respectful. For longer iterations, say of length $\gamma$, the argument then requires that 
$$\forces_{\alpha}``\dot\PP_{\alpha,\beta}\text{ is respectful}"$$
for sufficiently many $\alpha<\beta<\gamma$. This is problematic as we will not prove an iteration theorem of any kind for respectful forcings\footnote{Indeed it seems that no useful iteration theorem for respectful forcings is provable in $\ZFC$, see Subsection \ref{disrespectfulforcingsubsection}.}. This is where we take out the sledgehammer.
\begin{defn}
$(\ddagger)$\index{(double dagger)@$(\ddagger)$} holds if and only if all $\omega_1$-preserving forcings are respectful.
\end{defn}
\begin{lemm}\label{sledgehammerlemm}
$\SRP$ implies $(\ddagger)$.
\end{lemm}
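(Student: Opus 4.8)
The plan is to show that $\SRP$ implies every $\omega_1$-preserving forcing $\PP$ is respectful, i.e.\ verifies the dichotomy \ref{namedichotomyopt1}/\ref{namedichotomyopt2} of Definition \ref{namedichotomydef}. Since these two options are mutually exclusive (if $X$ respects $\dot I^p$ and some $(X,\PP)$-semigeneric $q\leq p$ forces $\check X[\dot G]$ to respect $\dot I$, there is no contradiction; the content is that \emph{at least} one holds, and they cannot both \emph{fail}), the real work is the implication: \emph{if} $X$ respects $\dot I^p$, \emph{then} there is a suitable semigeneric $q\leq p$. So fix $\theta$ large regular, a countable $X\prec H_\theta$ with $\PP,\dot I, p\in X$, where $\dot I$ names a normal uniform ideal, and assume $X$ respects $\dot I^p = \{S\mid p\forces \check S\in\dot I\}$. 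I must produce $q\leq p$ that is $(X,\PP)$-semigeneric and forces $\check X[\dot G]$ to respect $\dot I$.

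The key idea is to feed $\SRP$ the right stationary system so that the reflecting $\omega_1$-chain it produces lands, at level $\delta^X$, exactly on a structure witnessing semigenericity together with respect for $\dot I$. First I would pass to a slightly larger $\theta'$ and consider the set $\mathcal S$ of countable $Y\prec H_{\theta'}$ with $X,\PP,\dot I,p\in Y$ that encode a ``failure'': roughly, $Y$ such that there is no semigeneric condition below $p$ forcing $\check Y[\dot G]$ to respect $\dot I$, or more precisely such that $Y$ does \emph{not} respect $\dot I^p$ in a suitable localized sense. I would then apply $\SRP$ to $\mathcal S$ (with parameter $a$ coding $X,\PP,\dot I,p$) to obtain a continuous increasing chain $\langle Y_\alpha\mid\alpha<\omega_1\rangle$ of countable elementary substructures, each in $\mathcal S\cup\mathcal S^\perp$, with $X\sqsubseteq Y_\alpha$ arranged by the chain construction. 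The standard mechanism is that reflection forces ``stationarily many'' $Y_\alpha$ into $\mathcal S^\perp$, and the hypothesis that $X$ respects $\dot I^p$ is precisely what rules out landing in $\mathcal S$; this lets me read off from the chain a descending sequence of conditions whose limit is the desired semigeneric $q$.

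Concretely, the construction of $q$ is the usual semigenericity fusion but guided by the $\SRP$-chain: I would enumerate in $X$ the dense sets and the $\omega_1$-many $\dot I$-``small'' sets that $p$ forces nonstationary, and build a descending sequence $\langle p_n\mid n<\omega\rangle$ below $p$ meeting dense sets while keeping $\delta^X$ out of each set $A$ with $p\forces \check A\in\dot I$. The reflection property guarantees that the relevant reflecting structure at $\delta^X$ is closed enough that the limit condition $q=\inf_n p_n$ forces $\delta^{\check X}=\delta^{\check X[\dot G]}$ (semigenericity, using $\omega_1$-preservation of $\PP$) and simultaneously forces, for every $A\in \dot I^{X[\dot G]}\cap X[\dot G]$, that $\delta^X\notin A$ --- which is exactly ``$\check X[\dot G]$ respects $\dot I$''. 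The hypothesis that $X$ respects $\dot I^p$ is what seeds $\delta^X\notin A$ for the ground-model-decided sets, and $\SRP$-reflection propagates this to the sets decided only in the extension.

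The main obstacle I anticipate is the second half of the respect requirement: controlling the \emph{new} sets that $\dot G$ throws into $\dot I$, i.e.\ sets $A\in X[\dot G]$ with $q\forces\check A\in\dot I$ that are not already in $\dot I^p$. Semigenericity alone only guarantees $\delta^{X}=\delta^{X[\dot G]}$ and that $X[\dot G]$ is the correct generic extension of the collapse, so I need the reflecting chain from $\SRP$ to be chosen so that its limit behavior pins down $\dot I^{X[\dot G]}$ and forces $\delta^X$ to avoid all of it. This is the delicate step where the stationary set $\mathcal S$ fed to $\SRP$ must be engineered to encode the forcing relation ``$p\forces\check S\in\dot I$'' uniformly across the generic, so that membership in $\mathcal S^\perp$ exactly translates into ``$\delta^X$ avoids every $\dot I$-small set''; getting this encoding right, and checking that the contradiction with $X$ respecting $\dot I^p$ forces the chain away from $\mathcal S$, is the crux of the argument.
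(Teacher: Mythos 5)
You start on the paper's track --- apply $\SRP$ to a family of ``bad'' countable structures and use the $\mathcal S\cup\mathcal S^\perp$ dichotomy --- but the execution has two fatal problems. First, the only definition of $\mathcal S$ that makes the argument work is the one you list merely as a rough option: $Y\in\mathcal S$ iff there is \emph{no} $(Y,\PP)$-semigeneric $q\leq p$ forcing $\check Y[\dot G]$ to respect $\dot I$. Your ``more precise'' alternative ($Y$ does not respect $\dot I^p$) is a ground-model condition and cannot do the job. With the correct $\mathcal S$, the missing key step is the claim that $p\forces\check S\in\dot I$, where $S=\{\alpha<\omega_1\mid Y_\alpha\in\mathcal S\}$: one passes to a generic $G$ with $p\in G$, notes that on a club $\delta^{Y_\alpha[G]}=\delta^{Y_\alpha}=\alpha$, so some condition in $G$ below $p$ is $(Y_\alpha,\PP)$-semigeneric; hence for $\alpha\in S$ on this club $Y_\alpha[G]$ must fail to respect $\dot I^G$ (otherwise a condition in $G$ would contradict $Y_\alpha\in\mathcal S$), giving witnesses $N_\alpha\in\dot I^G\cap Y_\alpha[G]$ with $\alpha\in N_\alpha$; normality of $\dot I^G$ (pressing down, plus countability of the structures) stabilizes $N_\alpha=N$ on a positive set, a contradiction. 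You never formulate this claim, yet it is the only place where normality of $\dot I$ enters, and it is exactly what converts the hypothesis ``$X$ respects $\dot I^p$'' into $\delta^X\notin S$.

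Second, your endgame cannot be repaired: you propose to build $q$ as the infimum of a descending $\omega$-sequence of conditions (``the usual semigenericity fusion''). An arbitrary $\omega_1$-preserving forcing admits no such fusion --- a descending $\omega$-sequence need not have any lower bound, which is precisely why respectfulness is a nontrivial dichotomy rather than a $\ZFC$ fact about all $\omega_1$-preserving forcings. In the paper's proof no condition is ever constructed: the chain $\vec Y$ is chosen inside $X$, so continuity gives $Y_{\delta^X}\sqsubseteq X\cap H_\lambda$ (note the direction --- your ``$X\sqsubseteq Y_\alpha$ for all $\alpha$'' is backwards and impossible, since $\sup_{\alpha}\delta^{Y_\alpha}=\omega_1$). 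If $\delta^X\notin S$, the $\mathcal S^\perp$-alternative at level $\delta^X$ then forces $X\cap H_\lambda\notin\mathcal S$, and non-membership in $\mathcal S$ \emph{is}, by the very definition of $\mathcal S$, the existence of the desired semigeneric $q\leq p$ forcing respect; the choice of $\lambda$ with $2^{\vert\PP\vert}<\lambda<\theta$ transfers semigenericity and respect from $X\cap H_\lambda$ up to $X$. So what you flag as the ``crux'' (controlling the new $\dot I$-small sets in the extension) is resolved not by engineering an encoding or by fusion, but by the definition of $\mathcal S$ together with the pressing-down claim you omitted.
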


\begin{proof}
Let $\PP$, $\theta$, $\dot I$, $p$  be as Definition \ref{namedichotomydef}. It is easy to see that \ref{namedichotomyopt1} and \ref{namedichotomyopt2} cannot hold simultaneously. It is thus enough to prove that one of them holds.  Let $\lambda$ be regular, $2^{\vert\PP\vert}<\lambda<\theta$ and $\lambda\in X$ and consider the set 
\begin{align*}
\mathcal S=\{Y\in [H_\lambda]^\omega\mid& Y\prec H_\lambda\wedge \neg(\exists q\leq p\ q\text{ is }\\
&(Y, \PP)\text{-semigeneric}\text{ and }q\forces``\check Y[\dot G]\text{ respects }\dot I")\}.
\end{align*}

By $\SRP$, there is a continuous increasing elementary chain 
$$\vec Y=\langle Y_\alpha\mid\alpha<\omega_1\rangle$$
so that
\begin{enumerate}[label=$(\vec Y.\roman*)$]
\item $\PP, p, \dot I\in Y_0$ and
\item for all $\alpha<\omega_1$, either $Y_\alpha\in\mathcal S$ or there is no $Y_\alpha\sqsubseteq Z\prec H_\theta$ with $Z\in\mathcal S$.
\end{enumerate}
Let $S=\{\alpha<\omega_1\mid Y_\alpha\in\mathcal S\}$.

\begin{claim}
$p\forces\check S\in\dot I$.
\end{claim}

\begin{proof}
Let $G$ be generic with $p\in G$ and let $I=\dot I^G$.   Assume toward a contradiction that $S$ is $I$-positive.  Note that $\langle Y_\alpha[G]\mid\alpha<\omega_1\rangle$ is a continuous increasing sequence of elementary substructure of $H_\theta^{V[G]}$. Hence there is a club $C$ of $\alpha$ so that for $\alpha\in C$
$$\delta^{Y_\alpha}=\delta^{Y_\alpha[G]}=\alpha$$
and thus there is a $(Y_\alpha, \PP)$-semigeneric condition $q\leq p$, $q\in G$. Hence by definition of $S$, for any $\alpha\in S\cap C$, we may find some $N_\alpha\in I\cap Y_\alpha[G]$ so that $\delta^{Y_{\alpha}}\in N_\alpha$. By normality of $I$, there is some $I$-positive $T\subseteq S\cap C$ and some $N$ so that $N=N_\alpha$ for all $\alpha\in T$. But then for $\alpha\in T$, we have 
$$\alpha=\delta^{Y}\in N$$
so that $T\subseteq N$. But $N\in I$, contradiction.
\end{proof}
Thus if $\delta^X\in S$, then $S$ witnesses \ref{namedichotomyopt2} to hold. Otherwise, $\delta^X\notin S$. Note that $\delta^{Y_{\delta^X}}=\delta^X$ as $\vec Y\in X$. We find that $Y_{\delta^X}\sqsubseteq X\cap H_\lambda\prec H_\lambda$. Thus, $X\cap H_\lambda\notin\mathcal S$, so that there must be some $q\leq p$ that is $(X\cap H_\lambda, \PP)$-semigeneric and 
$$q\forces``(\widecheck{X\cap H_\lambda})[\dot G]\text{ respects }\dot I".$$
This $q$ witnesses that \ref{namedichotomyopt1} holds.
\end{proof}

We will get around this second issue by forcing $\SRP$ often along the iteration. 
Remember that what we really care about is preserving a witness $f$ of $\diab$ along an iteration of $f$-preserving forcings, so fix such an $f$ now.
It will be quite convenient to introduce some short hand notation.
\begin{defn}
Suppose $\PP$ is a forcing and $p\in \PP$. Then we let $I^\PP_p$\index{IPp@$I^{\PP}_p$} denote $\dot I^p$ where $\dot I$ is a $\PP$-name for $\NSf$. That is
$$I^\PP_p\coloneqq \{S\subseteq\omega_1\mid p\forces\check S\in\NSf\}.$$
\end{defn}

\begin{defn}
Suppose $f$ witnesses $\diab$. An $f$\textit{-ideal}\index{f ideal@$f$-ideal} is an ideal $I$ on $\omega_1$ so that

\begin{enumerate}[label=$(\roman*)$]
\item whenever $S\in I^+$ and $\langle D_i\mid i<\omega_1\rangle$ is a sequence of dense subsets of $\BB$, then 
$$\{\alpha\in S\mid \forall\beta<\alpha\ f(\alpha)\cap D_\beta\neq\emptyset\}\in I^+$$
\item and $S^f_b\in I^+$ for all $b\in \BP$.
\end{enumerate}
\end{defn}

Recall that $\NSf$ is clearly an $f$-ideal and it is normal and uniform by Lemma \ref{nsfnormalideallemm}.

\begin{prop}\label{fidealprop}
Suppose $\PP$ is a forcing that preserves $f$ and $p\in\PP$. Then $I^\PP_p$
is a normal uniform $f$-ideal.
\end{prop}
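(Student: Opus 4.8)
The plan is to show that $I^{\PP}_p$ is normal, uniform, and satisfies the two clauses of being an $f$-ideal, working under the standing hypothesis that $\PP$ preserves $f$ and $p \in \PP$. Recall $I^{\PP}_p = \{S \subseteq \omega_1 \mid p \forces \check S \in \NSf\}$. The basic strategy is to push everything back through the forcing relation: a set $S$ lies in $I^{\PP}_p$ exactly when $p$ forces $S$ to be $f$-nonstationary, so positivity ($S \in (I^{\PP}_p)^+$) means there is some $q \leq p$ with $q \forces \check S \text{ is } f\text{-stationary}$. Since $\NSf$ is a normal uniform $f$-ideal in any model where $f$ witnesses $\diab$ (Lemma \ref{nsfnormalideallemm}), and since $\PP$ preserves $f$ so that $f$ still witnesses $\diab$ in $V^{\PP}$, the idea is that each property of $I^{\PP}_p$ should follow from the corresponding property of $\NSf$ holding in the extension, read off via a density/forcing argument below $p$.

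First I would verify that $I^{\PP}_p$ is a normal uniform ideal. Uniformity (every bounded, indeed every nonstationary, subset of $\omega_1$ is in $I^{\PP}_p$) is immediate since $\NSf$ refines $\NS$ and $\NS$ is absolute upward, so $p$ forces every such set to be $f$-nonstationary. For normality, the key point is that normality is expressible as: for every sequence $\langle S_\xi \mid \xi < \omega_1 \rangle$ of $I^{\PP}_p$-positive sets, the diagonal union is positive, equivalently that $I^{\PP}_p$ is closed under diagonal unions of its members. I would translate $S \in (I^{\PP}_p)^+$ into the existence of a condition $q \leq p$ forcing $\check S \in \NSf^+$, then use that in $V^{\PP}$ the ideal $\NSf$ is normal. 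The care needed is that the regressive function / diagonal-intersection witnessing normality must be captured by names; one argues that if $p$ forced each $S_\xi$ to be $f$-nonstationary (i.e. each $S_\xi \in I^{\PP}_p$) yet some $q \leq p$ forced the diagonal union to be $f$-stationary, this contradicts normality of $\NSf$ in $V[G]$ for any generic $G \ni q$.

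Next I would check the two clauses specific to $f$-ideals. For clause (ii), I must show $S^f_b \in (I^{\PP}_p)^+$ for every $b \in \BP$, i.e. that $p$ does not force $S^f_b$ to be $f$-nonstationary. This follows because $\PP$ preserves $f$: in $V^{\PP}$, $f$ still witnesses $\diab$, so by Proposition \ref{diabequivprop}\ref{diabequivcond2} the set $S^f_b$ is $f$-stationary in $V[G]$ for every generic $G$; in particular any generic through $p$ sees $S^f_b$ as $f$-stationary, hence $S^f_b \notin \NSf^{V[G]}$, so $p \not\forces \check{S^f_b} \in \NSf$, giving $S^f_b \in (I^{\PP}_p)^+$. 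For clause (i), given $S \in (I^{\PP}_p)^+$ and a sequence $\langle D_i \mid i < \omega_1 \rangle$ of dense subsets of $\BP$, I would fix $q \leq p$ forcing $\check S$ to be $f$-stationary. Then in $V[G]$ for generic $G \ni q$, Proposition \ref{fstationaryequivformulationprop} (equivalence of $f$-stationarity with the diagonal-density formulation) tells us that $\{\alpha \in S \mid \forall \beta < \alpha\ f(\alpha) \cap D_\beta \neq \emptyset\}$ is stationary, hence $f$-stationary by the same proposition, hence $\NSf$-positive in $V[G]$. Since this holds below $q \leq p$, the set is not forced into $\NSf$ by $p$, so it lies in $(I^{\PP}_p)^+$ as required.

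The main obstacle I anticipate is the bookkeeping around clause (i): the sequence $\langle D_i \mid i<\omega_1\rangle$ is a ground-model object but $f$-stationarity is being verified in the extension, and I must make sure the diagonal-density set computed in $V[G]$ agrees with the one computed in $V$ (it does, since $f$, the $D_i$, and $S$ are all in $V$ and the defining condition $f(\alpha) \cap D_\beta \neq \emptyset$ is absolute). The genuinely delicate quantifier-juggling is that $(I^{\PP}_p)^+$ is defined by ``not forced by $p$ into $\NSf$,'' which is an existential-over-$q\leq p$ statement, so each positivity claim must be converted to ``there exists $G \ni q$ witnessing $f$-stationarity in $V[G]$'' and back; once the right equivalences from Propositions \ref{fstationaryequivformulationprop} and \ref{diabequivprop} are invoked in $V[G]$, the rest is routine. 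I would organize the whole argument so that every property of $I^{\PP}_p$ is derived uniformly as the shadow below $p$ of the corresponding property of $\NSf$ in the $f$-preserving extension.
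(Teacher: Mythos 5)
The paper itself leaves this proposition to the reader, so there is no official proof to compare against; judged on its own terms, most of your argument is sound. The ideal properties, uniformity, and normality are handled correctly: pulling closure under diagonal unions back through the forcing relation, using that $\NSf$ is a normal uniform ideal in $V[G]$ (Lemma \ref{nsfnormalideallemm}, available because $\PP$ preserves $f$), works exactly as you describe, and your verification of clause (ii) of the $f$-ideal definition (positivity of each $S^f_b$ via Proposition \ref{diabequivprop} applied in $V[G]$) is also correct.

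There is, however, a genuine gap in your verification of clause (i). Having fixed $q\leq p$ forcing $\check S$ to be $f$-stationary and a generic $G\ni q$, you correctly conclude from Proposition \ref{fstationaryequivformulationprop} that $T\coloneqq\{\alpha\in S\mid\forall\beta<\alpha\ f(\alpha)\cap D_\beta\neq\emptyset\}$ is \emph{stationary} in $V[G]$; but you then assert that $T$ is ``stationary, hence $f$-stationary by the same proposition.'' That inference is invalid: stationarity does not imply $f$-stationarity --- this is precisely the implication the paper warns can fail right after defining $f$-stationarity --- and Proposition \ref{fstationaryequivformulationprop} characterizes $f$-stationarity of $T$ by a condition quantifying over \emph{all} sequences of dense subsets of $\BB$ in $V[G]$, of which you have only verified the single consequence that $T$ itself is stationary. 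The missing step is the following interleaving argument: given an arbitrary sequence $\langle E_\beta\mid\beta<\omega_1\rangle\in V[G]$ of dense subsets of $\BB$, combine it with the ground-model sequence $\langle D_\beta\mid\beta<\omega_1\rangle$ via a bijection $\iota\colon\omega_1\times 2\rightarrow\omega_1$ into one sequence $\langle H_\gamma\mid\gamma<\omega_1\rangle$, apply Proposition \ref{fstationaryequivformulationprop} to the $f$-stationary set $S$ with $\langle H_\gamma\rangle$, and intersect the resulting stationary set with the club of $\alpha$ closed under $\iota$; every $\alpha$ in that intersection lies in
$$\{\alpha\in T\mid\forall\beta<\alpha\ f(\alpha)\cap E_\beta\neq\emptyset\},$$
so this set is stationary. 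Since $\langle E_\beta\rangle$ was arbitrary, $T$ is $f$-stationary in $V[G]$, whence $q\forces\check T\notin\NSf$ and $T\in (I^\PP_p)^+$. With this repair your proof goes through; without it, the step as written relies on a false implication.
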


We leave the proof to the reader. The next Lemma gives us a criterion that guarantees the relevant witness $f$ of $\diab$ to be preserved. We first introduce the notion of a $f$-semigeneric condition.

\begin{defn}
    Suppose $f$ witnesses $\diab$, $\PP$ is a forcing, $\theta$ is sufficiently large and $X\prec H_\theta$ is a $\fgood$ elementary substructure of $H_\theta$ with $\PP\in X$. A condition $p\in \PP$ is called $(X, \PP, f)$-semigeneric if
    $p$ is $(X, \PP)$-semigeneric and 
    $$p\forces \check X[\dot G]\text{ is }\check f\text{-slim}.$$
\end{defn}

\begin{lemm}\label{fpreservinglemm}
Suppose $f$ witnesses $\diab$ and $\PP$ is a forcing with the following property: For any sufficiently large regular $\theta$ and $p\in\PP$ there is a normal uniform $f$-ideal $I$ so that 
$$\{X\in[H_\theta]^\omega\mid X\prec H_\theta\wedge\PP, p\in X\wedge\exists q\leq p\ q\text{ is }(X, \PP, f)\text{-semigeneric}\}$$
is projective $I$-positive. Then $\PP$ preserves $f$.
\end{lemm}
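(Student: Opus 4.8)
The plan is to verify that $f$ witnesses $\diab$ in every generic extension $V[G]$ by $\PP$ using the combinatorial reformulation from Proposition~\ref{diabequivprop}. Concretely, fixing $b\in\BP$ and a $\PP$-name $\dot{\vec D}=\langle\dot D_\alpha\mid\alpha<\omega_1\rangle$ for a sequence of dense subsets of $\BP$, I would show that
$$T\coloneqq\{\alpha\in S^f_b\mid\forall\beta<\alpha\ f(\alpha)\cap\dot D_\beta^{\dot G}\neq\emptyset\}$$
is forced to be stationary; by Proposition~\ref{diabequivprop}\ref{diabequivcond3} this gives that $f$ witnesses $\diab$ in $V[G]$, i.e.\ that $\PP$ preserves $f$. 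So I additionally fix a condition $p\in\PP$ and a $\PP$-name $\dot C$ for a club subset of $\omega_1$; the goal becomes to produce $q\leq p$ forcing $T\cap\dot C\neq\emptyset$.

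First I would invoke the hypothesis at a sufficiently large regular $\theta$ with $\PP,p,b,\dot C,\dot{\vec D}\in H_\theta$, obtaining a normal uniform $f$-ideal $I$ for which $\mathcal A=\{X\prec H_\theta\mid\PP,p\in X\wedge\exists q\leq p\ q\text{ is }(X,\PP,f)\text{-semigeneric}\}$ is projective $I$-positive. Since $I$ is an $f$-ideal we have $S^f_b\in I^+$, so by definition of projective $I$-positivity the set $\{X\in\mathcal A\mid\delta^X\in S^f_b\}$ is stationary in $[H_\theta]^\omega$. Intersecting with the club of structures containing the parameters $\PP,p,b,\dot C,\dot{\vec D},f$, I fix such an $X$ together with an $(X,\PP,f)$-semigeneric $q\leq p$; note every member of $\mathcal A$ is $\fgood$, as this is presupposed by the notion of $(X,\PP,f)$-semigenericity. (Taking the same argument with $S=\omega_1\in I^+$ shows $\mathcal A$ is stationary, hence yields $(X,\PP)$-semigeneric conditions below every $p$ and so that $\PP$ preserves $\omega_1$; in particular $f$ still guesses $\BP$-filters in $V[G]$.)

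Now let $G$ be $\PP$-generic with $q\in G$ and write $\delta=\delta^X$. As $q$ is $(X,\PP)$-semigeneric we have $X\sq X[G]$, so $\delta^{X[G]}=\delta$, and the standard argument—the elements of $\dot C^G\cap X[G]$ below $\delta$ are cofinal in $\delta$ and $\dot C^G$ is closed—gives $\delta\in\dot C^G$. For the second requirement, $(X,\PP,f)$-semigenericity yields that $X[G]$ is $\fgood$, i.e.\ $f(\delta)$ is $\BP\cap\delta$-generic over $M_{X[G]}$. Let $\pi=\pi_{X[G]}$ be the inverse collapse; since $\BP\subseteq\omega_1$ and $\pi$ fixes ordinals below $\delta$, one has $\pi^{-1}(\BP)=\BP\cap\delta$. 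For each $\beta<\delta$ we have $\beta\in X[G]$, hence $D_\beta\coloneqq\dot D_\beta^G\in X[G]$ by elementarity and $X[G]\models``D_\beta\text{ is dense in }\BP"$; collapsing, $\pi^{-1}(D_\beta)=D_\beta\cap\delta$ is dense in $\BP\cap\delta$ in $M_{X[G]}$. Genericity of $f(\delta)$ over $M_{X[G]}$ gives $f(\delta)\cap(D_\beta\cap\delta)\neq\emptyset$, and since $f(\delta)\subseteq\BP\cap\delta$ this is $f(\delta)\cap D_\beta\neq\emptyset$. As $b\in f(\delta)$ (because $\delta\in S^f_b$) and $\beta<\delta$ was arbitrary, $\delta\in T$. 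Thus $q\forces\delta\in\dot C\cap T$, and as $p,\dot C$ were arbitrary, $T$ is forced stationary.

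The heart of the matter, and the step I expect to require the most care, is the second requirement: the $\dot D_\beta$ are names for dense sets that may be genuinely new in $V[G]$, so they cannot be met using $V$-level hypotheses on $f$ directly. The resolution is that the $\fgood$-ness of $X[G]$—precisely what the ``$f$'' in $(X,\PP,f)$-semigenericity buys—makes $f(\delta)$ generic over the collapse $M_{X[G]}$, which lives in $V[G]$ and already sees the evaluated dense sets $D_\beta$ for $\beta<\delta$; the collapse identifications $\pi^{-1}(\BP)=\BP\cap\delta$ and $\pi^{-1}(D_\beta)=D_\beta\cap\delta$ translate genericity into the desired meeting. The role of the $f$-ideal $I$ is then purely organizational: its axiom $S^f_b\in I^+$ together with projective $I$-positivity is exactly what lets us locate a single slim $X$ that is simultaneously semigeneric for $p$, has $\delta^X\in S^f_b$, and contains all relevant parameters.
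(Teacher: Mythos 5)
Your proof is correct and takes essentially the same route as the paper's: both reduce preservation of $f$ to the criterion of Proposition \ref{diabequivprop}, use the $f$-ideal axiom $S^f_b\in I^+$ together with projective $I$-positivity to locate a parameter-containing $X$ with $b\in f(\delta^X)$ admitting an $(X,\PP,f)$-semigeneric $q\leq p$, and then read off $\delta^X\in\dot C^G$ and the meeting of the $\dot D_\beta^G$ from the fact that $X\sqsubseteq X[G]$ is $\fgood$. The only difference is one of exposition: you spell out the collapse identifications $\pi^{-1}(\BP)=\BP\cap\delta^X$, $\pi^{-1}(D_\beta)=D_\beta\cap\delta^X$ and the $\omega_1$-preservation point, which the paper leaves implicit.
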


\begin{proof}
Assume $p\in\PP$, $\theta$ is sufficiently large and regular. Let $b\in\BB$,
$$\vec{\dot D}=\langle \dot D_\alpha\mid\alpha<\omega_1\rangle$$
be a sequence of $\PP$-names for dense subsets of $\BB$ and $\dot C$ a $\PP$-name for a club in $\omega_1$. We will find $q\leq p$ so that
$$q\forces\exists\alpha\in S^{\check f}_b\cap\dot C\forall\beta<\alpha\ \check f(\alpha)\cap\dot D_\beta\neq\emptyset.$$
By our assumption, there is some normal uniform $f$-ideal $I$ so that 
$$\{X\in[H_\theta]^\omega\mid X\prec H_\theta\wedge\PP, p\in X\wedge\exists q\leq p\ q\text{ is }(X, \PP, f)\text{-semigeneric}\}$$
is projective $I$-positive. It follows that we can find some countable $X\prec H_\theta$ so that 

\begin{enumerate}[label=$(X.\roman*)$]
\item\label{Xpropertiescond1} $\PP, p, \vec {\dot D}, \dot C\in X$ as well as
\item\label{Xpropertiescond2} $b\in f(\delta^X)$
\end{enumerate}
and some $q\leq p$ that is $(X, \PP, f)$-semigeneric. If $G$ is then any $\PP$-generic with $q\in G$, we have 
$$X\sqsubseteq X[G]\text{ is }\fgood$$
and hence $\delta^X\in\dot C^G$ as well as 
$$\forall\beta<\delta^X\ f(\delta^X)\cap \dot D^G_\beta\neq\emptyset.$$
\end{proof}

We also need to resolve a small issue that we glossed over in the sketch of a proof of Theorem \ref{omegakilliterationthm}.

\begin{lemm}\label{technicalQlemm}
Suppose $f$ witnesses $\diab$. Further assume that 
\begin{itemize}
\item $\PP$ is a respectful, $f$-preserving forcing and $p\in\PP$,
\item $\theta$ is sufficiently large and regular,
\item $X\prec H_\theta$ is countable, respects $I^\PP_p$ and $\PP, p\in X$ and
\item $M_X[f(\delta^X)]\models``D\text{ is dense below }\pi_X^{-1}(p)\text{ in }\pi_X^{-1}(\PP)"$.
 \end{itemize}
 Then there are $Y, q$ with
\begin{enumerate}[label=$(\roman*)$]
\item $X\sqsubseteq Y\prec H_\theta$ is countable,
\item $q\leq p$,
\item $Y$ respects $I^\PP_q$, in particular $Y$ is $\fgood$ and
\item $q\in \pi_Y[\mu_{X, Y}^+(D)]$.
\end{enumerate}
\end{lemm}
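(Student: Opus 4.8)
Lemma \ref{technicalQlemm} is the technical heart that makes the induction in the main iteration theorem go through: given a slim structure $X$ respecting $I^\PP_p$ and a dense set $D$ in the "collapsed and lifted" picture inside $M_X[f(\delta^X)]$, we must produce a thicker slim $Y\sqsupseteq X$ and a condition $q\leq p$ that meets the image of $D$ while $Y$ respects the *new* ideal $I^\PP_q$. Let me sketch how I would attack it.

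My plan is to use respectfulness of $\PP$ (Definition \ref{namedichotomydef}) together with the $\diab$-machinery. First I would unpack the hypothesis $M_X[f(\delta^X)]\models``D\text{ is dense below }\bar p\text{ in }\bar\PP"$, where $\bar p=\pi_X^{-1}(p)$ and $\bar\PP=\pi_X^{-1}(\PP)$. Pushing $D$ forward through $\pi_X$ gives a set $\pi_X[D]$ which, by elementarity of $X$ and genericity of $f(\delta^X)$ over $M_X$, is dense below $p$ in the sense that the relevant antichains are "seen" by $X$. The key mechanism is that I want a condition $q\leq p$ lying in $\pi_Y[\mu_{X,Y}^+(D)]$; since $\mu_{X,Y}^+\colon M_X[f(\delta^X)]\to M_Y[f(\delta^X)]$ (Proposition \ref{canonicalliftprop}) commutes with the collapses, a witness $d\in D$ collapsed appropriately in $Y$ will do, provided I can arrange $Y$ to contain this witness and be slim.

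The main construction would proceed as follows. Take the name $\dot I$ for $\NSf$, so that $I^\PP_p=\dot I^p=I_p^\PP$ is a normal uniform $f$-ideal by Proposition \ref{fidealprop}, and note $X$ respects it by hypothesis. Applying respectfulness with this $\dot I$ and the condition $p$: since $X$ *does* respect $I^\PP_p=\dot I^p$, option \ref{namedichotomyopt2} fails, so option \ref{namedichotomyopt1} must hold, yielding an $(X,\PP)$-semigeneric $q_0\leq p$ forcing $\check X[\dot G]$ to respect $\dot I$. Because $f\in X$ and $X$ respects $I^\PP_{q_0}\supseteq\dot I^{q_0}$, and respecting a name for $\NSf$ is exactly slimness (as noted after the definition of "respects"), the structure $Y:=X[\dot G]$ evaluated by any generic through $q_0$ will be $\fgood$. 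To actually meet $\mu_{X,Y}^+(D)$, I would further refine $q_0$ below a witness $d\in D$: since $D$ is dense below $\bar p$ in $M_X[f(\delta^X)]$ and $\mu_{X,Y}^+$ is elementary, $\mu_{X,Y}^+(D)$ is dense below $\mu_{X,Y}^+(\bar p)$ in $M_Y[f(\delta^X)]$, and choosing $q$ inside $\pi_Y[\mu_{X,Y}^+(D)]$ extending $q_0$ uses that $\pi_Y^{-1}(p)=\mu_{X,Y}^+(\bar p)$.

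The hard part, and where I would spend the most care, is coordinating the two demands on $q$ simultaneously: $q$ must both land inside $\pi_Y[\mu_{X,Y}^+(D)]$ \emph{and} keep $Y=X[G]$ respecting the ideal $I^\PP_q$ attached to the \emph{smaller} condition $q$ itself (not $p$). The subtlety is that shrinking $p$ to $q$ can only shrink the partial evaluation, i.e.\ $I^\PP_p\subseteq I^\PP_q$, so respecting $I^\PP_q$ is a genuinely stronger demand than respecting $I^\PP_p$; I would handle this by first fixing the slim $Y$ and the witness $d$ via semigenericity, then reading off $q$ so that $q\leq p$ forces $\delta^Y=\delta^X\notin A$ for every $A\in I^\PP_q\cap Y$—which is precisely what the respectfulness dichotomy delivers when applied with the name for $\NSf$, since by construction $q$ was chosen to force $Y[\dot G]$ to respect that ideal. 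The interplay between the lift $\mu_{X,Y}^+$ existing (needing $Y$ slim, Proposition \ref{canonicalliftprop}) and the choice of $q$ making $Y$ slim is the circularity that must be broken, and I expect the argument to resolve it by building $Y$ as the Skolem hull $X[G]$ of a single generic object and verifying slimness *after* the fact from the forcing statement about $\dot I=\NSf$.
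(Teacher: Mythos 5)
Your opening move coincides with the paper's: apply respectfulness with $\dot I$ a name for $\NSf$, use the hypothesis that $X$ respects $I^\PP_p=\dot I^p$ to rule out \ref{namedichotomyopt2}, obtain an $(X,\PP,f)$-semigeneric $q_0\leq p$, and pass to a generic $G\ni q_0$, so that $X[G]$ is $\fgood$. After that, however, there are two genuine gaps. First, $Y=X[G]$ cannot be the witness: it is not an element of $V$ and it is an elementary substructure of $H_\theta^{V[G]}$, not of $H_\theta^V$, so conclusion $(i)$ fails for it. The paper works with $Z=X[G]\cap V$ inside $V[G]$ and then outputs $Y=\mathrm{Hull}^{\mathcal H}(X\cup\{q\})$, the hull computed in $(H_\theta;\in,\unlhd)$ for a wellorder $\unlhd$; this $Y$ lies in $V$, satisfies $X\sqsubseteq Y$ because $Y\subseteq Z$ and $\delta^Z=\delta^X$, and inherits $q\in\pi_Y[\mu^+_{X,Y}(D)]$ from $q\in\pi_Z[\mu^+_{X,Z}(D)]$ by commutativity of the collapse maps. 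Your closing sentence about ``the Skolem hull of a single generic object'' gestures at exactly this, but it is never carried out, and in your actual construction $Y$ remains $X[G]$.

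Second, and more seriously, the existence of the condition $q$ is never established, and the way you propose to find it would fail. You want $q\in\pi_Y[\mu^+_{X,Y}(D)]$ \emph{extending} $q_0$; but $\pi_Y[\mu^+_{X,Y}(D)]$ is dense below $p$ only among conditions lying in the countable structure, while $q_0$ is an external semigeneric condition -- in general no condition of a countable $Y\in V$ extends $q_0$ (for $\PP=\Add(\omega_1,1)$, take $q_0$ whose restriction to $\omega$ codes a real outside $Y$), and the pointwise image of $D$ is not dense in $\PP$ itself, so neither density nor genericity of $G$ over $V$ produces such a $q$. Your alternative suggestion $q=\pi_X(d)$ for a witness $d\in D$ also cannot work: for such a $q$, chosen blindly, nothing makes any preassigned $Y$ respect $I^\PP_q$, and since $I^\PP_p\subseteq I^\PP_q$ this is precisely the difficulty the lemma exists to solve. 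The paper's resolution is that $q$ is found \emph{inside} $G$ (compatible with $q_0$, but not below it): one shows that the collapsed filter $\pi_Z^{-1}[G\cap Z]$ is generic over $M_Z[f(\delta^X)]$, hence meets $\mu^+_{X,Z}(D)$ below $\pi_Z^{-1}(p)$. That genericity is the real mathematical content hidden in the lemma: $M_Z$-genericity follows from elementarity of $X[G]\prec H_\theta^{V[G]}$ (for each dense $D'\in Z$, the true statement $G\cap D'\neq\emptyset$ reflects into $X[G]$), and genericity over the larger model $M_Z[f(\delta^X)]$ additionally uses slimness of $X[G]$ -- maximal antichains of $\pi_Z^{-1}(\BB)$ in $M_Z[\pi_Z^{-1}[G\cap Z]]$ are elements of $M_{X[G]}$, which $f(\delta^X)$ meets -- together with the product lemma. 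Once $q\in G$, the respecting clause is nearly automatic, much as you anticipate: any $A\in I^\PP_q\cap Y$ is genuinely $f$-nonstationary in $V[G]$ because $q\in G$, and $X[G]\supseteq Y$ respects $\NSf^{V[G]}$, so $\delta^X\notin A$. It is this genericity argument, replacing your ``refine $q_0$'' step, that is missing from the proposal.
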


\begin{proof}
We may assume that $X$ is an elementary substructure of 
$$\mathcal H\coloneqq (H_\theta;\in, \unlhd)$$
where $\unlhd$ is a wellorder of $H_\theta$. As $\PP$ is respectful and $X$ respects $I^\PP_p$, there is a $(X, \PP)$-semigeneric condition $r\leq p$ so that 
$$r\forces`` \check X[\dot G]\text{ respects }\NScheckf"$$
i.e.~$r$ is $(X, \PP, f)$-semigeneric. Let $G$ be $\PP$-generic with $r\in G$. Then $X[G]$ is $\fgood$. Let $Z=X[G]\cap V$, note that $\mu_{X, Z}^+$ exists by Proposition \ref{canonicalliftprop}. Now there is thus some $q\leq p$, $q\in G$ with 
$$q\in\pi_Z[\mu_{X, Z}^+(D)].$$
Finally, note that $q$ and $Y\coloneqq \mathrm{Hull}^\mathcal H(X\cup\{q\})$ have the desired properties.
\end{proof}

\begin{proof}[Proof of Theorem \ref{iterationtheorem}]
Let $\PP=\standarditeration$ be an iteration of $f$-preserving forcings which preserve old $f$-stationary sets and forces $\SRP$ at successor steps. We may assume inductively that $\PP_\alpha$ preserves $f$ for all $\alpha<\gamma$. The successor step is trivial, so we may restrict to $\gamma\in\Lim$. Note that we may further assume that $(\ddagger)$ holds in $V$, otherwise we could work in $V^{\PP_1}$. Let $p\in \PP$ and let $I\coloneqq I^{\QQ_0}_{p(0)}$. $I$ is a normal uniform $f$-ideal by Proposition \ref{fidealprop}. Now let $\theta$ be sufficiently large and regular, $X\prec H_\theta$ countable with
\begin{enumerate}[label=$(X.\roman*)$]
\item $\PP, p, f\in X$ and
\item $X$ respects $I$.
\end{enumerate}
By Proposition \ref{trueprojectiveprop} and Lemma \ref{fpreservinglemm}, it suffices to find $q\leq p$ that is $(X, \PP, f)$-semigeneric. Note that $X$ is $\fgood$ as $I$ is a $f$-ideal. Let 
$$h\colon \omega\rightarrow \omega\times\omega$$
be a surjection with $i\leq n$ whenever $h(n)=(i, j)$.\\
Let $\delta$ denote $\delta^X$. We will construct a fusion structure 
$$T, \langle \msup{p}{a, n}, \msup{T}{a, n}\mid a\in T_n,\ n<\omega\rangle$$
in $\PP$ as well as names  
$$\left\langle \msup{\dot X}{a, n}, \msup{\dot Z}{a, n} \left(\msup{\dot D_j}{a, n}\right)_{j<\omega}, \msup{\dot I}{a, n}\mid a\in T_n, n<\omega\right\rangle$$
so that for any $n<\omega$ and $a\in T_n$
\begin{enumerate}[label=$(F.\roman*)$]
\item\label{Qitlemmstartcond1}  $T_0=\{\1\}$, $\msup{p}{\1, 0}=p$, $\msup{\dot X}{\1, 0}=\check X$,  $\msup{\dot I}{\1, 0}=\check I$,
\item\label{Qitlemmstartcond2} $\msup{T}{\1, 0}\in X$ is a nested antichain that $p$ is a mixture of with $\msup{T_0}{\1, 0}=\{\1\}$,
\item\label{Qitlemmzcond} $a\forces_{\lh(a)}\msup{\dot Z}{a, n}=\msup{\dot X}{a, n}\cap V$,
\item\label{Qitlemmstartcond3} $\left(\msup{\dot D_j}{a, n}\right)_{j<\omega}$ is forced by $a$ to be an enumeration of all dense subsets of $\pi_{\msup{\dot Z}{a, n}}^{-1}(\check\PP)$ in 
$$M_{\msup{\dot Z}{a, n}}\left[\widecheck{f(\delta)}\right],$$
\item\label{Qitlemmstrongercond} $a\leq \msup{p}{a, n}\res\lh(a)$,
\item\label{Qnotalimcond} $\lh(a)$ is not a limit ordinal,
\item\label{Qitlemmindotxcond} $a\forces_{\lh(a)}\check{p}^{(a, n)}, \check{T}^{(a, n)}, \dot G_{\lh(a)}\in \msup{\dot X}{a, n}$,
\item\label{QdotIcond} $a\forces_{\lh(a)}\msup{\dot I}{a, n}=I^{\Q_{\lh(a)}}_{\msup{\check p}{a, n}(\lh(a))}$ and
\item\label{Qitlemmgoodcond} $a\forces ``\check X\sqsubseteq \msup{\dot X}{a, n}\prec H_{\check\theta}^{V[\dot G_{\lh(a)}]}\text{ is countable and respects }\msup{\dot I}{a, n}"$.
\end{enumerate}
Moreover, for any $b\in\sucm^n_T(a)$
\begin{enumerate}[label=$(F.\roman*)$, resume]
\item\label{Qitlemmoldelemcond} $b\res\lh(a)\forces_{\lh(a)}`` \msup{\check p}{b, n+1}, \msup{\check T}{b, n+1}\in\msup{\dot X}{a, n},\text{ in particular }\lh(\check b), \PP_{\lh(\check a), \lh(\check b)}\in\msup{\dot X}{a, n}$",
\item\label{Qitlemmnextcond} $b\forces_{\lh(b)} \msup{\dot X}{a, n}[\dot G_{\lh(a), \lh(b)}]\sqsubseteq \msup{\dot X}{b, n+1}$ and
\item\label{Qitlemmhitcond} if $h(n)=(i, j)$ and $c=\pred_T^{i}(b)$ then
$$b\forces_{\lh(n)}\msup{\check p}{b, n+1}\in \pi_{\msup{\dot X}{a, n}}[\dot \mu_{c, a}^+(\msup{\dot D_j}{c, i})].$$
\end{enumerate}

Here, $\mu_{c, a}^+$ denotes\footnote{There is some slight abuse of notation here in an effort to improve readability.} 
$$\mu_{\msup{\dot Z}{c, i}, \msup{\dot Z}{a, n}}^+\colon M_{\msup{\dot Z}{c, i}}[\check f(\check\delta)]\rightarrow M_{\msup{\dot Z}{a, n}}[\check f(\check \delta)].$$
We define all objects by induction on $n<\omega$. 
$$T_0=\{\1\}, \msup{p}{\1, 0}, \msup{T}{\1, 0}, \msup{\dot X}{\1, 0},\msup{\dot Z}{\1, 0} \left(\msup{\dot D_j}{\1, 0}\right)_{j<\omega}, \msup{\dot I}{\1, 0}$$
are given by \ref{Qitlemmstartcond1}-\ref{Qitlemmstartcond3} and \ref{QdotIcond}. Suppose we have already defined
$$T_n, \left\langle \msup{p}{a, n}, \msup{T}{a, n}, \msup{\dot X}{a, n},\msup{\dot Z}{a, n}, \left(\msup{\dot D_j}{a, n}\right)_{j<\omega}\mid a\in T_n\right\rangle$$
and we will further construct 
$$T_{n+1}, \left\langle \msup{p}{b, n+1}, \msup{T}{b, n+1}, \msup{\dot X}{b, n+1},\msup{\dot Z}{b, n+1}, \left(\msup{\dot D_j}{b, n+1}\right)_{j<\omega}\mid b\in T_{n+1}\right\rangle.$$
Fix $a\in T_n$. Let $E$ be the set of all $b$ so that

\begin{enumerate}[label=$(E.\roman*)$]
\item $b\in \PP_{\lh(b)}$ and $\lh(b)<\gamma$,
\item $\lh(a)\leq\lh(b)$ and $b\res\lh(a)\leq a$,
\end{enumerate}
and there are a nested antichain $S$ in $\PP$, $s\in\PP$ and names $\dot X$, $\dot I$ with
\begin{enumerate}[label=$(E.\roman*)$, resume]
\item $S\hooks\msup{T}{a, n}$,
\item $s\leq\msup{p}{a, n}$ is a mixture of $S$,
\item\label{QitlemmEgencond} if $h(n)=(i, j)$ and $c=\pred_T^i(a)$ then
$$b\forces_{\lh(b)} \check s\in \pi_{\msup{\dot Z}{a, n}}[\dot\mu^+_{c, a}(\msup{\dot D_j}{c, i})],$$
\item $\lh(b)$ is not a limit ordinal,
\item $b\res\lh(a)\forces_{\lh(a)}\check s, \check S \in\dot X$,
\item $b\forces_{\lh(b)} \check s\res\lh(b)\in\dot G_{\lh(b)}$,
\item\label{QitlemmEsqsubcond} $b\forces_{\lh(b)}\msup{\dot X}{a, n}\sqsubseteq\msup{\dot X}{a, n}[\dot G_{\lh(a), \lh(b)}]\sqsubseteq \dot X\prec H_{\check\theta}^{V[\dot G_{\lh(\check b)}]}$,
\item $b\forces_{\lh(b)}``\dot X\text{ is countable and respects }\dot I"$,
\item $b\forces_{\lh(b)}\dot I=I^{\Q_{\lh(b)}}_{\check s(\lh(b))}$ and
\item  if $S_0=\{c_0\}$ then $\lh(b)=\lh(c_0)$ and $b\leq c_0$. 
\end{enumerate}

\begin{claim}
$E\res\lh(a)\coloneqq \{b\res\lh(a)\mid b\in E\}$ is dense in $\PP_{\lh(a)}$.
\end{claim}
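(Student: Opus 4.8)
The plan is to prove the Claim by a genericity argument. Since every member of $E$ satisfies $b\res\lh(a)\leq a$ by $(E.ii)$, it suffices to show that for each $r\leq a$ in $\PP_{\lh(a)}$ there is some $b\in E$ with $b\res\lh(a)\leq r$; this is precisely density below $a$, which is what the surrounding fusion construction needs. To find such $b$ I would fix a $\PP_{\lh(a)}$-generic $G$ with $r\in G$, construct all the auxiliary data $(s,S,\dot X,\dot I)$ required by the definition of $E$ inside $W\coloneqq V[G]$, and then reflect: since the data will be found for every such $G$, some $r'\leq r$ in $\PP_{\lh(a)}$ forces their existence, and $b$ is assembled from $r'$ together with the witnessing tail condition and the new stem.

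Working in $W$, I first evaluate the names attached to $a$. By $(F.iii)$, $(F.vii)$ and $(F.ix)$, the set $X'\coloneqq\msup{\dot X}{a,n}[G]$ is a countable elementary substructure of $H_\theta^W$ with $X\sqsubseteq X'$ containing $\msup{p}{a,n}$, $\msup{T}{a,n}$ and $G$, respecting the $f$-ideal $I_a\coloneqq\msup{\dot I}{a,n}[G]=I^{\Q_{\lh(a)}}_{\msup{p}{a,n}(\lh(a))}$; as $I_a$ is an $f$-ideal, $X'$ is $\fgood$. Writing $Z'\coloneqq X'\cap V=\msup{\dot Z}{a,n}[G]$ and $\delta\coloneqq\delta^X$, the model $M_{X'}$ is $M_{Z'}[g]$ for the $\pi_{Z'}^{-1}(\PP_{\lh(a)})$-generic $g$ over $M_{Z'}$ induced by $G$, and $f(\delta)$ is generic over $M_{X'}$, so $g$ and $f(\delta)$ are mutually generic over $M_{Z'}$. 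With $h(n)=(i,j)$ and $c=\pred_T^{i}(a)$, the canonical lift $\mu^{+}_{c,a}$ sends $\msup{\dot D_j}{c,i}$ to a set $D'$ that $M_{Z'}[f(\delta)]$ sees as dense in $\pi_{Z'}^{-1}(\PP)$. Hitting $D'$ below $\pi_{Z'}^{-1}(\msup{p}{a,n})$, but along conditions whose restriction to $\pi_{Z'}^{-1}(\PP_{\lh(a)})$ lies in $g$ (possible by mutual genericity), and applying $\pi_{Z'}$, produces $s\leq\msup{p}{a,n}$ with $s\res\lh(a)\in G$ and $s\in\pi_{Z'}[\mu^{+}_{c,a}(\msup{\dot D_j}{c,i})]$; this secures $(E.v)$ and guarantees that $s\res\lh(a)$ is compatible with $r$. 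Feeding $s$ into Miyamoto's Fact \ref{m2.11fact}, with the cofinal set $A$ taken to consist of successor ordinals, repackages $s$ as a mixture of a nested antichain $S\hooks\msup{T}{a,n}$ whose stem $\{c_0\}$ has successor length $\lh(b)\coloneqq\lh(c_0)$, yielding $(E.iii)$, $(E.vi)$ and the final clause in the definition of $E$.

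To obtain the name $\dot X$ for a $\sqsubseteq$-extension of $X'[\dot G_{\lh(a),\lh(b)}]$ respecting the ideal $\dot I\coloneqq I^{\Q_{\lh(b)}}_{s(\lh(b))}$, as demanded by $(E.ix)$--$(E.xi)$, I invoke respectfulness of the tail $\PP_{\lh(a),\lh(b)}$. This tail is $\omega_1$-preserving by the induction hypothesis (that $\PP_{\lh(b)}$ preserves $f$, as $\lh(b)<\gamma$), and $(\ddagger)$ holds in $W$ by $(\PP.i)$ and Lemma \ref{sledgehammerlemm} (respectively, by the standing assumption that $(\ddagger)$ holds in the ground model), so the tail is respectful. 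Applying respectfulness below $s\res[\lh(a),\lh(b))$ to the model $X'$ and the ideal name $\dot I$, exactly one of $(\mathrm{Res}.i)$ and $(\mathrm{Res}.ii)$ holds; in the good case $(\mathrm{Res}.i)$ there is an $(X',\PP_{\lh(a),\lh(b)})$-semigeneric $t\leq s\res[\lh(a),\lh(b))$ forcing $X'[\dot G]\sqsubseteq\dot X$ to respect $\dot I$, which is exactly the witness I need.

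The main obstacle is to exclude the bad alternative $(\mathrm{Res}.ii)$, that $X'$ fails to respect $\dot I^{\,s\res[\lh(a),\lh(b))}$, and this is where hypothesis $(\PP.ii)$ is essential. Any $N$ in this partial evaluation lies in $W=V[G_{\lh(a)}]$ and is forced by $s$ to become $f$-nonstationary only by stage $\lh(b)+1$; but every iterand from stage $\lh(a)$ onward preserves the $f$-stationary sets of the earlier models by $(\PP.ii)$, so such an $N$ is already $f$-nonstationary at stage $\lh(a)$ and hence lies in $I_a$ (using the standard bookkeeping in $(F.viii)$--$(F.ix)$ that arranges the $\lh(a)$-coordinate of $s$ to kill no sets beyond $\msup{p}{a,n}(\lh(a))$). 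Since $X'$ respects $I_a$ and $\delta=\delta^{X'}$, no such $N$ contains $\delta$, so $(\mathrm{Res}.ii)$ fails and $(\mathrm{Res}.i)$ delivers $t$ and $\dot X$. Finally, reflecting the existence of $s$, $S$, $t$, $\dot X$, $\dot I$ through $\PP_{\lh(a)}$ yields $r'\leq r$ in $G$ forcing all of it; assembling $b\in\PP_{\lh(b)}$ from $r'$ (taken below $s\res\lh(a)$ and $a$ as well), the tail witness $t$, and the stem $c_0$ produces the required $b\in E$ with $b\res\lh(a)\leq r$. Beyond this reduction, I expect the genuinely delicate points to be the coherence of the embeddings $\mu^{+}_{c,a}$ along the $\pred$-chain and the mutual-genericity bookkeeping needed to keep $s\res\lh(a)$ inside $G$.
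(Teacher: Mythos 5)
Your proposal follows the same skeleton as the paper's proof: pass to $V[G]$ for $G\ni a'$ generic over $\PP_{\lh(a)}$, produce $s\leq\msup{p}{a,n}$ hitting the transferred dense set with $s\res\lh(a)\in G$, repackage $s$ as a mixture of a nested antichain via Fact \ref{m2.11fact}, transfer the ideal down to the first tail coordinate using $(\PP.ii)$ and the inductive hypothesis, and finish with respectfulness of the tail $\PP_{\lh(a),\lh(b)}$, available by $\SRP$/$(\ddagger)$ and Lemma \ref{sledgehammerlemm}. Your mutual-genericity production of $s$ is also sound; it is the same mechanism that drives the paper's proof of Lemma \ref{technicalQlemm}. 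The gap is exactly at the point you call the main obstacle, namely excluding $(\mathrm{Res}.ii)$. After the ideal transfer, what you need is that $X'$ respects $\dot I^{\,s\res[\lh(a),\lh(b))}=I^{\Q_{\lh(a)}}_{s(\lh(a))}$. But $s(\lh(a))$ is a proper strengthening of $\msup{p}{a,n}(\lh(a))$, so $I^{\Q_{\lh(a)}}_{s(\lh(a))}$ may properly contain $I_a=I^{\Q_{\lh(a)}}_{\msup{p}{a,n}(\lh(a))}$: hypothesis $(\PP.ii)$ only protects $f$-stationary sets from models \emph{before} stage $\lh(a)$, so the iterand $\Q_{\lh(a)}$ is free to kill sets that are new in $V[G]$, and stronger conditions decide more such sets into $\NSf$. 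Your parenthetical fix --- that ``standard bookkeeping in $(F.viii)$--$(F.ix)$ arranges the $\lh(a)$-coordinate of $s$ to kill no sets beyond $\msup{p}{a,n}(\lh(a))$'' --- refers to bookkeeping that does not exist: those two conditions only fix what $\msup{\dot I}{a,n}$ is and say that $\msup{\dot X}{a,n}$ respects it, and no such constraint on $s$ can be arranged, since the dense set $D$ you must hit lives on the whole iteration and its elements will in general strictly decide more of $\NSf$ at coordinate $\lh(a)$.

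The defect is structural rather than a missing detail: once $s$ has been fixed by bare mutual genericity, there may be some $N\in X'$ with $\delta\in N$ and $s(\lh(a))\forces N\in\NSf$; then not only $X'$ but \emph{every} $Y$ with $X'\sqsubseteq Y$ fails to respect $I^{\Q_{\lh(a)}}_{s(\lh(a))}$, so $(\mathrm{Res}.ii)$ holds, $(\mathrm{Res}.i)$ fails (they are mutually exclusive), and no semigeneric $t$ exists. This is why the paper chooses the condition and the model \emph{simultaneously} rather than one after the other: its Subclaim invokes Lemma \ref{technicalQlemm} for the single iterand $\QQ=\Q_{\lh(a)}^{G}$ (respectful by $(\ddagger)$ in $V[G]$, $f$-preserving by hypothesis), applied to the dense projection $D_1$ of $D$ onto the coordinate $\lh(a)$. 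In that lemma, $s(\lh(a))$ is extracted from a $\QQ$-generic filter $G'$ containing an $(X',\QQ,f)$-semigeneric condition, and the thicker model $Y$ is taken inside $X'[G']$; since everything $s(\lh(a))\in G'$ kills is genuinely $f$-nonstationary in $V[G][G']$, and $X'[G']$ respects $\NSf$ of that model, $Y$ respects $I^{\QQ}_{s(\lh(a))}$. The tail-respectfulness dichotomy is then applied to $Y$, not to $X'$, and the bad case is excluded by construction; the name $\dot X$ is $Y[\dot G_{\lh(a),\lh(b)}]$ rather than $X'[\dot G_{\lh(a),\lh(b)}]$. To repair your proof you would have to abandon the ``find $s$ first, then worry about the ideal'' order and reinstate this simultaneous choice, i.e., essentially re-prove Lemma \ref{technicalQlemm} at coordinate $\lh(a)$.
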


\begin{proof}
Let $a'\leq a$ and let $G$ be $\PP_{\lh(a)}$-generic with $a'\in G$. 
By \ref{Qitlemmstrongercond}, $\msup{p}{a, n}\res\lh(a)\in G$. Work in $V[G]$. Let $h(n)=(i, j)$ and $c=\pred_T^i(a)$. Let
$$\msup{X}{c, i}=\left(\msup{\dot X}{c, i}\right)^{G_{\lh(c)}}\text{ and }\msup{X}{a, n}=\left(\msup{\dot X}{a, n}\right)^G$$
as well as $\msup{Z}{c, i}=\msup{X}{c, i}\cap V$, $\msup{Z}{a, n}=\msup{X}{a, n}\cap V$. Find $r\in \msup{T_1}{a, n}$ with $r\res\lh(a)\in G$. As $\msup{p}{a, n}$ is a mixture of $\msup{T}{a, n}$, we have 
$$r\leq \msup{p}{a, n}\res\lh(r).$$ 
Let $\hat r=r^\frown\msup{p}{a, n}\res[\lh(r), \gamma)$. Note that $\hat r\in \msup{X}{a, n}$, as 
$$\msup{p}{a, n}, \msup{T}{a, n}, G\in\msup{X}{a, n}$$
by \ref{Qitlemmindotxcond}. Moreover, $\hat r\res\lh(a)\in G$. Let $\QQ\coloneqq\Q_{\lh(a)}^G$ and 
$$D\coloneqq\mu_{c, a}^+((\dot D^i_j)^{G_{\lh(c)}})\in M_{\msup{Z}{a, n}}[f(\delta)]\subseteq M_{\msup{X}{a, n}}[f(\delta)].$$

\begin{sub}
There are $s$, $Y$ with
\begin{enumerate}[label=$(\roman*)$]
\item $\msup{X}{a, n}\sqsubseteq Y\prec H_\theta^{V[G]}$,
\item $s\leq \msup{p}{a, n}$,
\item $s\res\lh(a)\in G$,
\item $s\in \pi_Y[\mu_{\msup{X}{a, n}, Y}^+(D)]$ and
\item\label{Qsubtruecond} $Y$ respects $I^{\QQ}_{s(\lh(a))}$.
\end{enumerate}
\end{sub}

\begin{proof}
Let
$$D_0\coloneqq\{t\in D\mid \pi_{\msup{X}{a, n}}(t)\leq\msup{p}{a, n}\wedge \pi_{\msup{X}{a, n}}(t)\res\lh(a)\in G\}$$
and $D_1$ be the projection of $D_0$ onto $\pi_{\msup{X}{a, n}}^{-1}(\QQ)$. Observe that 
$$M_{\msup{X}{a, n}}[f(\delta)]\models``D_1\text{ is dense below } \pi_{\msup{X}{a, n}}^{-1}(\msup{p}{a, n}(\lh(a))\text{ in }\pi_{\msup{X}{a, n}}^{-1}(\QQ)".$$
Applying Lemma \ref{technicalQlemm} with (making use of the notation there) 
\begin{itemize}
\item $\PP=\Q$,
\item $p=\msup{p}{a, n}(\lh(a))$,
\item $X=\msup{X}{a, n}$ and
\item $D=D_0$,
\end{itemize} 
we find some countable $Y$ and some $s_0$ with 
\begin{enumerate}[label=\rn*]
\item $\msup{X}{a, n}\sqsubseteq Y\prec H_\theta^{V[G]}$,
\item $s_0\leq \msup{p}{a, n}(\lh(a))$,
\item $s_0\in \pi_{Y}[\mu_{\msup{X}{a, n}, Y}^+(D_1)]$ and
\item $Y$ respects $I^\QQ_{s_0}$.
\end{enumerate}

By definition of $D_1$, there is $s\leq\msup{p}{a, n}$ with 
\begin{enumerate}[label=$(s.\roman*)$]
\item $s\res\lh(a)\in G$,
\item $s\in\pi_Y[\mu_{\msup{X}{a, n}, Y}^+(D)]$ and
\item $s(\lh(a))=s_0$.
\end{enumerate}
$Y, s$ have the desired properties. 

\end{proof}
We can now apply Fact \ref{m2.11fact} in $Y$ and get a nested antichain $S\in\msup{X}{a, n}$ with 
\begin{enumerate}[label=$(S.\roman*)$]
\item $s$ is a mixture of $S$,
\item if $S_0=\{d\}$ then $\lh(r)\leq\lh(d)$, $d\res\lh(r)\leq r$ and $\lh(d)$ is not a limit ordinal and
\item $S\hooks \msup{T}{a, n}$.
\end{enumerate}
Let $\dot X$ be a name for $Y[\dot G_{\lh(a),\lh(d)}]$ and $\dot I$ a name for $I^{\Q_{\lh(d)}}_{s(\lh(d))}$.

\begin{sub}
In $V[G]$, we have
$$\dot I^{s\res\lh(d)}=I^{\PP_{\lh(a),\lh(d)+1}}_{s\res\lh(d)+1}=I^\QQ_{s(\lh(a))}.$$
\end{sub}
\begin{proof}
The first equality is simply by definition of $\dot I$. The second equality follows as we preserve old $f$-stationary sets along the iteration and since $\PP_{\lh(a),\lh(d)+1}$ preserves $f$ by our inductive hypothesis.
\end{proof}

It follows that
$$Y\text{ respects }\dot I^{s\res\lh(d)}.$$
As $\lh(a)$ is not a limit ordinal, $(\ddagger)$ holds in $V[G]$, so that $\PP_{\lh(a),\lh(d)}$ is respectful by Lemma \ref{sledgehammerlemm}. Thus there is $b\in \PP_{\lh(a), \lh(d)}$, $b\leq s\res\lh(d)$ so that 
$$b\forces_{\lh(b)}``\check Y\sqsubseteq \check Y[\dot G]\text{ respects }\dot I".$$
Since $b\res\lh(a)\in G$, we may assume further that $b\res\lh(a)\leq a'$. $s, S, \dot X, \dot I$ witness $b\in E$.
\end{proof}

To define $T_{n+1}$, fix a maximal antichain $A\subseteq E\res\lh(a)$, and for any $e\in A$ choose $b_e\in E$ with $b_e\res\lh(a)=e$.  We set $\sucm_T^n(a)=\{b_e\mid e\in A\}$. For any $b\in \sucm_T^n(a)$, let $S, s, \dot X, \dot I$ witness $b\in E$. We then let 
\begin{itemize}
\item$\msup{p}{b, n+1}=s,\ \msup{T}{b, n+1}=S,\ \msup{\dot X}{b, n+1}=\dot X,\ \msup{\dot I}{b, n+1}=\dot I$,
\item $\msup{\dot Z}{b, n+1}$ be a name for $\dot X\cap V$ and
\item $\left(\msup{\dot D_j}{b, n+1}\right)_{j<\omega}$ be a sequence of names that are forced by $b$ to enumerate all dense subsets of $\pi_{\msup{\dot Z}{b, n+1}}^{-1}(\PP)$ in $M_{\msup{\dot Z}{b, n+1}}\left[\widecheck{f(\delta)}\right]$.
\end{itemize}
This finishes the construction.\medskip

By Fact \ref{m2.11fact}, there is a mixture $q$ of $T$. Let $G$ be $\PP$-generic with $q\in T$. By Fact \ref{m3.5fact}, in $V[G]$ there is a sequence $\langle a_n\mid n<\omega\rangle$ so that for all $n<\omega$
\begin{enumerate}[label=$(\vec a.\roman*)$]
\item $a_0=q_0$,
\item $a_{n+1}\in\sucm_T^{n}(a_n)$ and
\item $\msup{p}{a_n, n}\in G$.
\end{enumerate}  

For $n<\omega$, let $\alpha_n=\lh(a_n)<\gamma$. For $n<\omega$ we let 
$$X_n\coloneqq \left(\msup{\dot X}{a_n, n}\right)^{G_{\alpha_n}}$$
and also 
$$X_\omega=\bigcup_{n<\omega} X_n[G_{\alpha_n, \gamma}].$$
Further, for $n\leq \omega$ let 
$$Z_n\coloneqq X_n\cap V\text{ and }\pi_n\coloneqq \pi_{Z_n}.$$
We remark that 
$$X_n[G_{\alpha_n, \gamma}]\sqsubseteq X_m[G_{\alpha_m,\gamma}]\prec H_\theta^{V[G]}$$
follows inductively from \ref{Qitlemmindotxcond} and \ref{Qitlemmgoodcond} for $n\leq m<\omega$ so that $X_\omega\prec H_\theta^{V[G]}$. We aim to prove that 
$$X\sqsubseteq X_\omega\text{ is }\fgood.$$
In fact, we will show
\begin{enumerate}[label=$(Z_\omega.\roman*)$]
\item\label{QZcond1} $X\sqsubseteq Z_\omega$,
\item\label{QZcond2} $Z_\omega\text{ is }\fgood$ and
\item\label{QZcond3} $\pi_{\omega}^{-1}[G]\text{ is generic over }M_{\omega}[f(\delta)]$,
\end{enumerate}
which implies the above.

\begin{claim}\label{Qeqaulityclaim}
$Z_\omega=\bigcup_{n<\omega} Z_n$.
\end{claim}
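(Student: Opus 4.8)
The plan is to prove the set equality $Z_\omega=\bigcup_{n<\omega} Z_n$ by establishing both inclusions, where $Z_\omega=X_\omega\cap V$ and $X_\omega=\bigcup_{n<\omega}X_n[G_{\alpha_n,\gamma}]$. The inclusion $\bigcup_{n<\omega}Z_n\subseteq Z_\omega$ is the easy direction: for each $n$, we have $Z_n=X_n\cap V$, and since $X_n\subseteq X_n[G_{\alpha_n,\gamma}]\subseteq X_\omega$, any element of $Z_n$ lies in $X_\omega\cap V=Z_\omega$. So the content is entirely in the reverse inclusion $Z_\omega\subseteq\bigcup_{n<\omega}Z_n$.

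First I would fix an arbitrary $x\in Z_\omega=X_\omega\cap V$. Since $x\in X_\omega=\bigcup_{n<\omega}X_n[G_{\alpha_n,\gamma}]$, there is some $n<\omega$ with $x\in X_n[G_{\alpha_n,\gamma}]$. The goal is to show $x\in Z_m$ for some $m$, i.e.\ that $x$ already appears in some $X_m$ as a ground-model element, not merely as an element of a generic extension of $X_n$. The natural strategy is to produce a $\PP_{\alpha_n,\gamma}$-name $\tau\in X_n$ with $\tau^{G_{\alpha_n,\gamma}}=x$, and then use the fact that $x\in V$ together with genericity to locate $x$ as the evaluation being forced equal to a \emph{check} name, or to pass to a later stage $\alpha_m$ where the relevant initial segment of the generic has been absorbed into $X_m$.

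The key mechanism I would exploit is condition \ref{Qitlemmnextcond}, namely $b\forces_{\lh(b)}\msup{\dot X}{a,n}[\dot G_{\lh(a),\lh(b)}]\sqsubseteq\msup{\dot X}{b,n+1}$, which along the generic branch $\langle a_n\mid n<\omega\rangle$ yields $X_n[G_{\alpha_n,\alpha_{n+1}}]\subseteq X_{n+1}$. Iterating, $X_n[G_{\alpha_n,\alpha_m}]\subseteq X_m$ for all $m\geq n$. Thus if the name $\tau\in X_n$ for $x$ can be evaluated using only the generic information below some $\alpha_m$ — which is exactly what happens because the forcing below $\alpha_m$ decides more and more — then $x=\tau^{G_{\alpha_n,\alpha_m}}\in X_m$, and since $x\in V$ this gives $x\in X_m\cap V=Z_m$. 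Concretely, I would argue that since $x\in V$, genericity of $G$ over $V$ and elementarity let us find, inside $X_n$, a maximal antichain of conditions each deciding the value of $\tau$ to be a specific check-name $\check{y}$; the condition $a_{n}$ (or some $a_m$ along the branch, using $\msup p{a_m,m}\in G$) selects one such $y\in V\cap X_m$, and this $y$ must equal $x$.

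The main obstacle I anticipate is the bookkeeping needed to certify that the value of $\tau$ is decided by a \emph{bounded} piece of the generic, i.e.\ that we may take the antichain of deciding conditions to lie in $\PP_{\alpha_n,\alpha_m}$ for some finite $m$ rather than requiring the full tail up to $\gamma$. This is where the nice-support structure and the fact that each $X_n$ is countable become essential: $\tau$ and the maximal antichain deciding it both lie in the countable structure $X_n\prec H_\theta$, so the antichain is a countable, hence (by $f$-properness/semigenericity along the branch and the $\sqsubseteq$-coherence in \ref{Qitlemmnextcond}) eventually met by the generic at some bounded stage $\alpha_m$. I would isolate this as the crux and verify that the coherence conditions \ref{Qitlemmindotxcond} and \ref{Qitlemmnextcond}, together with $x\in V$, force the deciding condition into $X_m$ at a finite level, completing the inclusion.
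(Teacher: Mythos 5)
Your decomposition is the same as the paper's (the inclusion $\bigcup_{n<\omega}Z_n\subseteq Z_\omega$ is trivial, all content is in the converse), and you correctly isolate the crux: given $x\in Z_\omega$ with a name $\tau\in X_i$, one must arrange that some condition \emph{in the generic} decides $\tau$, with the decided value landing in some $Z_m$. But the mechanism you propose for this crux fails. You take a maximal antichain $A\in X_i$ of deciding conditions and appeal to genericity of $G$ over $V$: indeed $G$ meets $A$, but at a condition that need not belong to $X_i$, nor to any $X_m$ --- $A$ is in general uncountable, and only $A\cap X_i$ is countable. Nothing you cite repairs this. Semigenericity along the branch only guarantees $\delta^{X_m}=\delta^{X_m[G_{\alpha_m,\gamma}]}$, i.e.\ agreement about $\omega_1$, not that $G$ is in any sense generic over the submodels or their collapses; and the iterands here are only assumed $f$-preserving, not ($f$-)proper, so such genericity is never automatic. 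In fact, the statement that $G$ meets the dense sets of the collapsed models \emph{inside} those models is precisely \ref{QZcond3}, which in the paper is proved \emph{after} this claim and uses it; your sketch implicitly assumes a version of it, which is circular.

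What actually closes the gap is the part of the fusion structure you never invoke: the dense-set bookkeeping \ref{Qitlemmstartcond3}, the surjection $h$, and above all the hitting condition \ref{Qitlemmhitcond}. The paper's proof runs as follows. Pick $\dot x\in Z_i$ a $\PP$-name for an element of $V$ with $\dot x^G=x$. The set $D$ of conditions in $\pi_i^{-1}(\PP)$ deciding $\pi_i^{-1}(\dot x)$ is a dense set lying in the countable model $M_{Z_i}[f(\delta)]$, hence by \ref{Qitlemmstartcond3} equals $\bigl(\msup{\dot D_j}{a_i,i}\bigr)^G$ for some $j<\omega$. Now take $n$ with $h(n)=(i,j)$; condition \ref{Qitlemmhitcond} was built into the construction exactly so that $\msup{p}{a_{n+1},n+1}\in\pi_n[\mu_{a_i,a_{n+1}}^+(D)]$. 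This condition decides $\dot x$ to be some $z\in Z_n$, and since $\msup{p}{a_{n+1},n+1}\in G$ by the choice of the branch, $x=\dot x^G=z\in Z_n$. So the deciding condition is not \emph{found} by genericity over $V$ at all; it was deliberately \emph{planted} in $G$ by the fusion construction, and conditions \ref{Qitlemmindotxcond} and \ref{Qitlemmnextcond} alone --- which is all your argument uses --- cannot substitute for this.
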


\begin{proof}
$``\supseteq"$ is trivial, so we show $``\subseteq"$. Let $x\in Z_\omega$ and find $i<\omega$ with $x\in X_i[G_{\alpha_i, \gamma}]$. Note that there is $\dot x\in Z_i$ a $\PP$-name for a set in $V$ with $x=\dot x^G$. Let $D\in M_i$ be the dense set of conditions in $\pi_n^{-1}(\PP)$ deciding $\pi_i^{-1}(\dot x)$. There must be some $j<\omega$ so that 
$$\left(\msup{\dot D_j}{a_i, i}\right)^G =D.$$
Now find $n$ with $h(n)=(i, j)$. We then have
$$\msup{p}{a_{n+1}, n+1}\in \pi_n[\mu_{a_i, a_{n+1}}^+(D)]$$
by \ref{Qitlemmhitcond}. We have that $\msup{p}{a_{n+1}, n+1}$ decides $\dot x$ to be some $z\in X_n$, and as $\msup{p}{a_{n+1}, n+1}\in G$,
$$x=\dot x^G=z\in X_n\cap V=Z_n.$$
\end{proof}

As $X\sqsubseteq X_n$ is $\fgood$ by \ref{Qitlemmgoodcond} for $n<\omega$, \ref{QZcond1} and \ref{QZcond2} follow at once. It remains to show \ref{QZcond3}.\\
As $Z_\omega$ is $\fgood$ and by Claim \ref{Qeqaulityclaim}, we have that
$$\langle M_\omega[f(\delta)], \mu_{n,\omega}^+\mid n<\omega\rangle=\varinjlim\langle M_n[f(\delta)], \mu_{n, m}^+\mid n\leq m<\omega\rangle$$
for some $(\mu_{n,\omega}^+)_{n<\omega}$. Let $E\in M_\omega[f(\delta)]$ be dense in $\pi_{\omega}^{-1}(\PP)$. Then for some $i,j<\omega$, $E=\mu_{i,\omega}^+(D)$ for 
$$D\coloneqq \left(\msup{\dot D_j}{a_i, i}\right)^G.$$
Find $n$ with $h(n)=(i, j)$. By \ref{Qitlemmhitcond},
$$\msup{p}{a_{n+1}, n+1}\in \pi_{n}[\mu_{i, n}^+(D)]\subseteq\pi_{\omega}[\mu_{i,\omega}^+(D)]=\pi_\omega[E].$$
As $\msup{p}{a_{n+1}, n+1}\in G$, we have $E\cap \pi_\omega^{-1}[G]\neq\emptyset$, which is what we had to show.
\end{proof}

\section{$f$-Proper and $f$-Semiproper Forcings}

Suppose $f$ witnesses $\diab$. We already used the term $(X, \PP, f)$-semigeneric which suggests there should be a notion of $f$-semiproperness. Indeed there is and it behaves roughly like semiproperness. In fact, there are several other classes associated to $f$ which mirror well-known forcing classes.

\begin{defn}
    A forcing $\PP$ is $f$-complete if for any sufficiently large regular $\theta$, for any $\fgood$ $X\prec H_\theta$ with $\PP\in X$ and any $g\subseteq \bar\PP$ generic over $M_X[f(\delta^X)]$, there is a some $p\in \PP$ with 
    $$p\forces\dot G\cap \check X=\pi_X[\check g].$$
\end{defn}
\begin{defn}
        A forcing $\PP$ is $f$-proper if for any sufficiently large regular $\theta$, any $\fgood$ $X\prec H_\theta$ with $\PP\in X$ and any $p\in X\cap \PP$, there is a $(X, \PP, f)$-generic condition $q\leq p$, that is a condition $q$ with 
        $$q\forces\dot G\cap\check X\text{ is generic over }\check X\wedge \check X[\dot G]\text{ is }\check f\text{-slim}".$$
\end{defn}
\begin{defn}
    A forcing $\PP$ is $f$-semiproper if for any sufficiently large regular $\theta$, any $\fgood$ $X\prec H_\theta$ with $\PP\in X$ and any $p\in X\cap \PP$, there is a $(X, \PP, f)$-semigeneric condition $q\leq p$.
\end{defn}

The following graphic collects all provable relations between the relevant forcing classes.

\begin{center}
\begin{tikzpicture}
\def\y{0.65};
\def\x{1.2};

\node (classic) at (0, 2*\y) {Classical};
\node (diamond) at (6*\x, 2*\y) {$\diamondsuit$-Forcing};

\node (sigma) at (0, 0){complete($\approx\sigma$-closed)};
\node (proper) at (0, -2*\y) {proper};
\node (semiproper) at (0, -4*\y) {semiproper};
\node (stationary) at (0, -6*\y) {stationary set preserving};
\node (omega1pres) at (0, -8*\y) {$\omega_1$-preserving};
\node (fcomp) at (6*\x, 0) {$f$-complete};
\node (fproper) at (6*\x, -2*\y) {$f$-proper};
\node (fsemiproper) at (6*\x, -4*\y) {$f$-semiproper};
\node (fstationary) at (6*\x, -6*\y) {$f$-stationary set preserving};
\node (fpres) at (6*\x, -8*\y) {$f$-preserving};

\draw[-] (-2*\x, 1*\y) -- (8*\x, 1*\y);
\draw[-, dashed] (3*\x, 2.5*\y) -- (3*\x, -9*\y);
\draw[-implies, double equal sign distance] (sigma)--(proper);
\draw[-implies, double equal sign distance] (proper)--(semiproper);
\draw[-implies, double equal sign distance] (semiproper)--(stationary);
\draw[-implies, double equal sign distance] (stationary)--(omega1pres);
\draw[-implies, double equal sign distance] (fcomp)--(fproper);
\draw[-implies, double equal sign distance] (fproper)--(fsemiproper);
\draw[-implies, double equal sign distance] (fsemiproper)--(fstationary);
\draw[-implies, double equal sign distance] (fstationary)--(fpres);

\draw[-implies, double equal sign distance] (sigma)--(fcomp);
\draw[-implies, double equal sign distance] (fpres)--(omega1pres);

\end{tikzpicture}
\end{center}

We also get the expected iteration theorems.

\begin{thm}
    Any countable support iteration of $f$-complete (resp. $f$-proper) forcings is $f$-complete (resp. $f$-proper).
\end{thm}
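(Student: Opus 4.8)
The plan is to relativize Shelah's classical countable-support iteration theorem for proper forcings to the $\fgood$ setting, replacing genericity and semigenericity by their $f$-versions throughout. I would argue by induction on $\gamma=\lh(\PP)$, but (as always) the naked statement ``$\PP_\gamma$ is $f$-proper'' does not survive the limit step, so I would prove the stronger extension principle $(\star)_\gamma$: for every $\delta\le\gamma$, every sufficiently large regular $\theta$, every $\fgood$ $X\prec H_\theta$ with $\PP,\delta\in X$, every $(X,\PP_\delta,f)$-generic $q\in\PP_\delta$, and every $\PP_\delta$-name $\dot p\in X$ with $q\forces\dot p\in\PP_\gamma\cap\check X$ and $q\forces\dot p\res\delta\in\dot G_\delta$, there is an $(X,\PP_\gamma,f)$-generic $r\in\PP_\gamma$ with $r\res\delta=q$ and $r\forces\dot p\in\dot G_\gamma$. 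Taking $\delta=0$, $q=\1$ and $\dot p=\check p$ recovers $f$-properness of $\PP_\gamma$. (For $f$-completeness, the analogous $(\star)$ additionally prescribes a coherent generic $g$ over $M_X[f(\delta^X)]$ and demands that $r$ realize it; the limit is then handled by the completeness closure rather than by meeting dense sets.)

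For the successor step $\gamma=\eta+1$ the content is a two-step lemma: if $\PP_\eta$ is $f$-proper and $\forces_{\PP_\eta}\Q_\eta$ is $f$-proper, then $(\star)$ propagates across one step. Given the name $\dot p$, I would first apply the induction hypothesis $(\star)_\eta$ to obtain an $(X,\PP_\eta,f)$-generic $q'$ extending $q$ with $q'\forces\dot p\res\eta\in\dot G_\eta$, then pass to $V[G_\eta]$ below $q'$, where $X[G_\eta]$ is $\fgood$ and $\Q_\eta^{G_\eta},\dot p(\eta)\in X[G_\eta]$. Applying $f$-properness of $\Q_\eta$ in $V[G_\eta]$ to $X[G_\eta]$ yields an $(X[G_\eta],\Q_\eta,f)$-generic condition below $\dot p(\eta)$, which pulls back to a name. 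The two genericities compose to give that $\dot G\cap X$ is generic over $X$, while $\fgood$ness of $X[\dot G]=X[G_\eta][H]$ is exactly what the second application delivers, with $\delta^{X[\dot G]}=\delta^X$ since each iterand preserves $\omega_1$.

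The limit step $\gamma\in\Lim$ is the main obstacle. Fix $\langle\gamma_n\mid n<\omega\rangle\in X$ increasing with $\gamma_0=\delta$ and $\sup_n\gamma_n=\sup(X\cap\gamma)$, and an enumeration $\langle D_n\mid n<\omega\rangle$ of the dense subsets of $\PP_\gamma$ lying in $X$. I would build a fusion $\langle q_n,\dot p_n\mid n<\omega\rangle$ where $q_n\in\PP_{\gamma_n}$ is $(X,\PP_{\gamma_n},f)$-generic, $q_{n+1}\res\gamma_n=q_n$, and $\dot p_n\in X$ is a $\PP_{\gamma_n}$-name for a condition in $\PP_\gamma\cap X$ with $q_n\forces\dot p_n\res\gamma_n\in\dot G_{\gamma_n}$, $\dot p_n\in\dot D_{n-1}$, and $\dot p_{n}\le\dot p_{n-1}$. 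The step from $n$ to $n+1$ is precisely one application of $(\star)_{\gamma_{n+1}}$ (legitimate since $\gamma_{n+1}<\gamma$) with $\delta=\gamma_n$, the given $q_n$, and a name $\dot p_{n+1}$ chosen below $q_n$ to force a member of $D_n\cap X$ below $\dot p_n$ into the generic. The countable-support limit $q_\omega=\bigcup_n q_n$ has $\supp(q_\omega)\subseteq X\cap\gamma$, hence lies in $\PP_\gamma$ with $q_\omega\res\delta=q$; since every dense $D\in X$ is some $D_n$ and $q_{n+1}$ forces a member of $D_n\cap X$ into $\dot G$, $q_\omega$ forces $\dot G\cap X$ to be generic over $X$.

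It remains to check that $q_\omega$ forces $X[\dot G]$ to be $\fgood$, and this is the only genuinely new ingredient, though it reduces to the direct-limit reflection already carried out in the proof of Theorem~\ref{iterationtheorem}. Writing $X_\omega=X[\dot G]$, one has $M_{X_\omega}=\varinjlim_n M_{X[\dot G_{\gamma_n}]}$ along the canonical maps $\mu^+$, each $M_{X[\dot G_{\gamma_n}]}$ being $\fgood$ with $f(\delta)$ generic over it because $q_n$ is $(X,\PP_{\gamma_n},f)$-generic. As $\BB\cap\delta$ has size $\le\delta<\omega_1$, any dense $E\in M_{X_\omega}$ already appears in some $M_{X[\dot G_{\gamma_n}]}$, and genericity of $f(\delta)$ there gives $f(\delta)\cap E\neq\emptyset$; hence $f(\delta)$ is $\BB\cap\delta$-generic over $M_{X_\omega}$, i.e.\ $X_\omega$ is $\fgood$. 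The only remaining subtlety is the interaction of $\cof(\gamma)$ with the choice of $\langle\gamma_n\rangle$: one treats $\cof(\gamma)>\omega$ (where $\sup(X\cap\gamma)<\gamma$ and the problem reflects strictly below $\gamma$) separately from the cofinal case, but since $f$-proper forcings are in particular proper and do not collapse cofinalities to $\omega$, the standard case analysis applies verbatim. The $f$-complete case is strictly easier at the limit: one prescribes a coherent sequence of generics over the $M_{X[\dot G_{\gamma_n}]}[f(\delta)]$ and reads off the limit condition directly from the completeness ($\sigma$-closure-style) closure, with no dense-set bookkeeping needed.
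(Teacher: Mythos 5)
Your skeleton (Shelah's countable-support properness argument with the strengthened two-step hypothesis $(\star)$, relativized to $f$) is the right one, and your successor step is correct; but the limit step has a genuine gap at exactly the point you call ``the only genuinely new ingredient''. The claim $M_{X[\dot G]}=\varinjlim_n M_{X[\dot G_{\gamma_n}]}$, so that every dense set in $M_{X[\dot G]}$ appears in some $M_{X[\dot G_{\gamma_n}]}$, is false: $X[G]\neq\bigcup_n X[G_{\gamma_n}]$. Countable support iterations of ($f$-)proper forcings add new reals at limits of countable cofinality (the join of the generic reals in a length-$\omega$ iteration of Sacks forcing lies in no intermediate model), and from such a real one can define a dense subset $D\subseteq\BB$ admitting a name in $X$, so that $D\in X[G]$ while $D\notin V[G_{\gamma_n}]$ for every $n$, let alone $D\in X[G_{\gamma_n}]$. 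Genericity of $f(\delta^X)$ over each $M_{X[G_{\gamma_n}]}$ says nothing about $\pi_{X[G]}^{-1}(D)=D\cap\delta^X$, and your fusion only meets dense subsets of $\PP_\gamma$ lying in $X$; that yields genericity of $\bar G\coloneqq\pi_X^{-1}[G\cap X]$ over $M_X$ only, not over $M_X[f(\delta^X)]$, which is strictly stronger since $f(\delta^X)\notin X$. So as written, $q_\omega$ is not shown to force that $X[\dot G]$ is $\fgood$, i.e.\ the $f$-part of $(\star)_\gamma$ is unproven at limits.

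The repair is precisely the mechanism of the paper's proof of Theorem \ref{iterationtheorem}. Since $f$-proper forcings are proper, $X[G]\cap V=X$ and $M_{X[G]}=M_X[\bar G]$ (this is one reason the present theorem is ``much easier'': the analogue of Claim \ref{Qeqaulityclaim} comes for free). By the product lemma, as $f(\delta^X)$ is generic over $M_X$, the filter $f(\delta^X)$ is generic over $M_X[\bar G]=M_{X[G]}$ as soon as $\bar G$ is generic over $M_X[f(\delta^X)]$. Hence the dense sets your names $\dot p_n$ must hit are \emph{all dense subsets of $\pi_X^{-1}(\PP_\gamma)$ lying in the countable model $M_X[f(\delta^X)]$}, not merely the collapses of dense sets in $X$; this is exactly what conditions \ref{Qitlemmstartcond3} and \ref{Qitlemmhitcond} arrange, and what \ref{QZcond3} verifies, in the proof of Theorem \ref{iterationtheorem} (compare also Lemma \ref{technicalQlemm}). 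The corresponding one-step extension is available in your induction because, again by the product lemma, $(X,\PP_{\gamma_n},f)$-genericity of $q_n$ is equivalent to $q_n$ forcing $\pi_X^{-1}[\dot G_{\gamma_n}\cap X]$ to be generic over $M_X[f(\delta^X)]$, so the usual projection-density argument for choosing $\dot p_{n+1}$ can be run with $M_X[f(\delta^X)]$ in place of $X$. With this bookkeeping replacing your direct-limit claim, the rest of your argument goes through; note that for the $f$-complete case the definition itself already quantifies over generics for the collapsed poset over $M_X[f(\delta^X)]$, which is a further sign that this is the model whose dense sets must be tracked throughout.
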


\begin{thm}
    Any nice iteration of $f$-semiproper forcings is $f$-semiproper.
\end{thm}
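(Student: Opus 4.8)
The plan is to prove this by induction on the length $\gamma$ of the nice iteration $\PP=\standarditeration$, following the proof of Theorem \ref{iterationtheorem} but in a substantially cleaner setting. The goal is to show that for every sufficiently large regular $\theta$, every $\fgood$ $X\prec H_\theta$ with $\PP\in X$, and every $p\in X\cap\PP$, there is an $(X,\PP,f)$-semigeneric $q\leq p$. The crucial simplification over Theorem \ref{iterationtheorem} is that, by the implications recorded in the diagram above, each iterand is $f$-stationary set preserving, so no old $f$-stationary set is ever destroyed. Consequently the whole apparatus of respecting a growing family of $f$-ideals $I^\PP_p$ collapses: we only ever need the intermediate structures to be $\fgood$, i.e.~to respect $\NSf$, and we may dispense entirely with $(\ddagger)$ and with Proposition \ref{trueprojectiveprop}/Lemma \ref{fpreservinglemm}, aiming for semigenericity directly.

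The successor step $\gamma=\gamma_0+1$, where $\PP_\gamma\cong\PP_{\gamma_0}\ast\Q_{\gamma_0}$, is the usual two-step composition. Given $\fgood$ $X$ with $\PP_\gamma,p\in X$, the induction hypothesis yields an $(X,\PP_{\gamma_0},f)$-semigeneric $q_0\leq p\res\gamma_0$, so $q_0\forces\check X[\dot G_{\gamma_0}]\text{ is }\check f\text{-slim}$; since $\PP_{\gamma_0}$ preserves $f$ (being $f$-semiproper), working below $q_0$ the structure $X[\dot G_{\gamma_0}]$ is $\fgood$ and contains $\Q_{\gamma_0}$, so $f$-semiproperness of $\Q_{\gamma_0}$ supplies a name for a semigeneric extension of $p(\gamma_0)$. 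Concatenating produces the required $(X,\PP_\gamma,f)$-semigeneric $q$.

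The substance is the limit step, $\gamma\in\Lim$. Here I would run the fusion construction of Theorem \ref{iterationtheorem} essentially verbatim: fix a surjection $h\colon\omega\to\omega\times\omega$ with $i\leq n$ whenever $h(n)=(i,j)$, put $\delta=\delta^X$, and build a fusion structure $T,\langle\msup{p}{a,n},\msup{T}{a,n}\mid a\in T_n,\ n<\omega\rangle$ together with names $\msup{\dot X}{a,n}$ for the intermediate substructures and enumerations $\msup{\dot D_j}{a,n}$ of the dense subsets of $\pi_{\msup{\dot Z}{a,n}}^{-1}(\PP)$ in $M_{\msup{\dot Z}{a,n}}[\widecheck{f(\delta)}]$. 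The invariants are exactly those of Theorem \ref{iterationtheorem}, except that condition \ref{Qitlemmgoodcond} is replaced by the weaker ``$\check X\sqsubseteq\msup{\dot X}{a,n}\prec H_\theta^{V[\dot G_{\lh(a)}]}$ is countable and $\fgood$'' (no ideal to respect), while the hitting condition \ref{Qitlemmhitcond} is kept unchanged. After producing a fusion $q$ via Fact \ref{m3.2fact}, I fix $\PP$-generic $G\ni q$, extract the branch $\langle a_n\mid n<\omega\rangle$ of Fact \ref{m3.5fact}, and form $X_\omega=\bigcup_{n<\omega}X_n[G_{\alpha_n,\gamma}]$. That $X\sqsubseteq X_\omega$ is $\fgood$ then follows word-for-word as in Theorem \ref{iterationtheorem}: one shows $Z_\omega=\bigcup_n Z_n$, deduces $X\sqsubseteq Z_\omega$ is $\fgood$, and checks that $\pi_\omega^{-1}[G]$ is generic over $M_\omega[f(\delta)]$ using the directed limit of the $\mu^+$-maps (Proposition \ref{canonicalliftprop}) and the hitting condition to meet every dense set.

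The one genuinely different, and main, obstacle is the density claim that drives the construction, namely that for fixed $a\in T_n$ the set $E\res\lh(a)$ of admissible successors is dense in $\PP_{\lh(a)}$. Passing to a $\PP_{\lh(a)}$-generic $G$ below some $a'\leq a$ and choosing $r\in\msup{T_1}{a,n}$ with $r\res\lh(a)\in G$, the single-coordinate step is now supplied directly by $f$-semiproperness of $\Q_{\lh(a)}$ (applied to the $\fgood$ structure $\msup{X}{a,n}$), in place of Lemma \ref{technicalQlemm}; this yields $s$ and an $\fgood$ $Y$ with $\msup{X}{a,n}\sqsubseteq Y$ and $s\in\pi_Y[\mu^+_{\msup{X}{a,n},Y}(D)]$ hitting the intended dense set $D$. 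Feeding this into Miyamoto's Fact \ref{m2.11fact} inside $Y$ against a cofinal $A\subseteq\gamma$ gives a nested antichain $S\hooks\msup{T}{a,n}$ with $s$ a mixture and $S_0=\{d\}$, $\lh(d)\in A$. The final task---finding $b\leq s\res\lh(d)$ in the tail forcing $\PP_{\lh(a),\lh(d)}$ that forces $Y$ to $\sqsubseteq$-extend to an $\fgood$ structure---is exactly where the appeal to $(\ddagger)$ in Theorem \ref{iterationtheorem} gets replaced by the induction hypothesis: in $V[G]$ the tail $\PP_{\lh(a),\lh(d)}$ is a nice iteration of $f$-semiproper forcings of strictly smaller length, hence $f$-semiproper, and so admits a $(Y,\PP_{\lh(a),\lh(d)},f)$-semigeneric $b$. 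Checking carefully that these tails really are covered by the induction hypothesis---that they are nice iterations of $f$-semiproper forcings in the relevant intermediate models---is the technical heart of the limit step.
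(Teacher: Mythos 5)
Since the paper omits its own proof of this theorem (it only remarks that it is ``much easier'' than that of Theorem \ref{iterationtheorem}), your proposal can only be measured against the evident intended simplification, and your architecture is exactly that: successor step by two-step composition, limit step by the fusion argument of Theorem \ref{iterationtheorem} with every occurrence of ``respects $I^{\PP}_p$'' collapsed to $\fgood$ness, and with no need for $(\ddagger)$, Proposition \ref{trueprojectiveprop} or Lemma \ref{fpreservinglemm}. One local inaccuracy: the one-coordinate step is not supplied ``directly'' by $f$-semiproperness of $\Q_{\lh(a)}$. $f$-semiproperness only hands you an $(X,\Q,f)$-semigeneric $r\leq p$; to obtain the pair $Y$, $s_0$ with $s_0\in\pi_Y[\mu^+_{X,Y}(D_1)]$ you still have to rerun the proof of Lemma \ref{technicalQlemm} (pass to a generic filter containing $r$, find the hitting condition inside it, take the hull of $X$ together with that condition), with $f$-semiproperness merely replacing respectfulness as the source of the semigeneric condition. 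That adaptation is routine, but it is an argument, not an application of the definition.

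The genuine gap is the step you flag and then defer. Your induction is on the length of nice iterations, and in the limit step you apply it to the tail $\PP_{\lh(a),\lh(d)}$ over $V[G_{\lh(a)}]$; for that you need that this tail is (at least forcing equivalent to) a nice iteration of $f$-semiproper forcings in $V[G_{\lh(a)}]$. This is not among the Miyamoto facts quoted in the paper, and it is not a routine verification: conditions of $\PP_{\lh(a),\lh(d)}$ are sequences lying in $V$, witnessed by nested antichains lying in $V$, whereas the nice limit of the quotient iteration computed inside $V[G_{\lh(a)}]$ admits new nested antichains and new sequences; identifying the two up to forcing equivalence is the analogue of the lemma that tails of $\RCS$-iterations are $\RCS$-iterations, i.e.~real work. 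Note that this is precisely the point at which Theorem \ref{iterationtheorem} resorts to the sledgehammer: no iteration theorem for the tails is available there, so respectfulness --- a property of a single forcing, requiring only $\omega_1$-preservation and supplied wholesale by $\SRP$ via Lemma \ref{sledgehammerlemm} --- substitutes for it. Without $\SRP$ you must obtain tail $f$-semiproperness honestly, and the standard repair is to strengthen the statement proved by induction: show, by induction on $\delta\leq\gamma$, that for all $\beta\leq\delta$, $\forces_{\PP_\beta}``\dot\PP_{\beta,\delta}\text{ is }f\text{-semiproper}"$. Then the tail you need is literally an instance of the induction hypothesis (with second coordinate $\lh(d)<\gamma$), and no lemma about tails being nice iterations is required; the price is that the fusion construction must be run relative to $\beta$, i.e.~with a $\PP_\beta$-name $\dot X$ for the $\fgood$ structure and with the conclusion that some condition $q$ has $q\res\beta$ forcing $q\res[\beta,\gamma)$ to be semigeneric for the quotient. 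Your proposal does not address this restructuring, so as written its limit step rests on an unproved and nontrivial claim.
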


The proof is much easier than that of Theorem \ref{iterationtheorem}, so we omit it.

\bibliographystyle{alpha}
\bibliography{bib}
\end{document}